\newtheorem{theorem}{Theorem}[section]
\newtheorem{conjecture}{Conjecture}[section]
\newtheorem{lemma}{Lemma}[section]
\newtheorem{remark}{Remark}[section]
\newcommand{\abs}[1]{|#1|^2}
\def\XXint#1#2#3{{\setbox0=\hbox{$#1{#2#3}{\int}$ }
\vcenter{\hbox{$#2#3$ }}\kern-.6\wd0}}
\newtheorem{prop}{Proposition}[section]
\newtheorem{defn}{Definition}[section]
\newtheorem{corr}{Corollary}[section]
\newcommand{\ric}{\mathrm{Ric}}
\newcommand{\bk}[1]{\Big(#1\Big)}
\newcommand{\ddbar}{i\partial\bar\partial}
\newcommand{\cp}{\mathbb{CP}}
\numberwithin{equation}{section}
\begin{document}
\title[On Feldman-Ilmanen-Knopf conjecture for K\"ahler Ricci flow]{On Feldman-Ilmanen-Knopf conjecture for the blow-up behavior of the K\"ahler Ricci flow}%\author{Bin Guo \and Jian Song}\date{}

\author[B. Guo]{Bin Guo$^*$}
\thanks{Research supported in part by NSF grants DMS-1406124 and DMS-1406164}
\address{$^*$ Department of Mathematics, Rutgers University, Piscataway, NJ 08854}\email{bguo@math.rutgers.edu}
\author[J. Song]{Jian Song$^\dagger$}
\address{$^\dagger$ Department of Mathematics, Rutgers University, Piscataway, NJ 08854}\email{jiansong@math.rutgers.edu}

\maketitle
%\tableofcontents
\begin{abstract}  We consider the Ricci flow on $\cp^n$ blown-up at one point starting with any $U(n)$-invariant K\"ahler metric. It is proved in \cite{Zhux, Fo, Song1} that the K\"ahler-Ricci flow must develop Type I singularities. We show that  if the total volume does not go to zero at the singular time, then any Type I parabolic blow-up limit of the Ricci flow along the exceptional divisor is the unique $U(n)$-complete shrinking K\"ahler-Ricci soliton  on $\mathbb C^n$ blown-up at one point.  This establishes the conjecture of Feldman-Ilmanen-Knopf \cite{FIK}.
\end{abstract}

\section{Introduction}

The Ricci flow, first introduced by Hamilton (\cite{H}), is the parabolic equation
$$\frac{\partial g}{\partial t}  = -2\ric(g)$$ evolving the Riemannian metrics by its Ricci curvature. It has become a fundamental tool to study geometry and topology. The K\"ahler-Ricci flow is the Ricci flow on a K\"ahler manifold  starting with a K\"ahler metric. The K\"ahler Ricci flow has developed into a vast field and has made important progress in recent years (e.g.\cite{Pe,SeTi,PS,PSS,PSSW1,PSSW2,PSSW0,ST1,ST0,ST3,TiZz,ChWa} this list is far from complete).

In this paper, we study the unnormalized K\"ahler Ricci flow 
\begin{equation}\label{KRF}
\frac{\partial \omega}{\partial t}  = -\ric(\omega), \quad \omega(0) = \omega_0
\end{equation}
on $X = \cp^n\# \overline{\cp^n}$, i.e., $\cp^n$ blown-up at one point. We will always assume that the initial K\"ahler metric $\omega_0$ is invariant  under the action of a maximal compact subgroup $U(n)$ of the automorphism group of $X$. It is proved (\cite{SWe0}) that the flow \eqref{KRF} must develop finite time singularity and it either shrinks to a point, collapses to $\cp^{n-1}$ or contracts an exceptional divisor, in the Gromov-Hausdorff topology.

The Ricci flow solution $g(t)$ is said to develop type I singularity on $X$ at the finite singular time $T$ if there exists $C>0$ such that 
$$ \sup_{X\times [0, T)} (T-t) |Rm(g(t))| \leq C. $$
It is proved in \cite{Nab, EMT} that if the Ricci flow develops type I singularity on a closed manifold, then the type I blow-up limit along essential singularities must be a nontrivial complete shrinking Ricci soliton.

$\mathbb{CP}^n$ blown-up at one point  is in fact a $ \mathbb{CP}^1$ bundle over $ \mathbb{CP}^{n-1}$ given by
$$X= \mathbb{P}( \mathcal{O}_{ \mathbb{CP}^{n-1}}\oplus \mathcal{O}_{ \mathbb{CP}^{n-1}}(-1)).$$
Let $D_0$ be the exceptional divisor of $X$ defined by the image of the section $(1,0)$ of $\mathcal{O}_{\mathbb{CP}^{n-1}}\oplus \mathcal{O}_{\mathbb{CP}^{n-1}}(-1)$ and $D_\infty$ be the divisor of $X$ defined by the image of the section $(0,1)$ of $\mathcal{O}_{\mathbb{CP}^{n-1}}\oplus \mathcal{O}_{ \mathbb{CP}^{n-1}}(-1)$. Both the $0$-section $D_0$ and the $\infty$-section are complex hypersurfaces in $X$ isomorphic to $ \mathbb{CP}^{n-1}$.  The K\"ahler cone on $X$ is given by
$$\mathcal{K}= \{ -a[D_0] + b [D_\infty] ~|~ 0<a<b\}.$$ In particular, when $n=2$, $D_0$ is a holomorphic $S^2$ with self-intersection number $-1$. We will write  the exceptional divisor of $X$ as $E$ and it is in fact equal to $D_0$.

Let $\omega_0$ be the initial $U(n)$ invariant K\"ahler metric of the Ricci flow (\ref{KRF}) on $X$. We let $[\omega_0] \in 
b_0 [D_\infty] - a_0 [D_0] $ with $0<a_0<b_0$. Then the limiting behavior of the Ricci flow can be summarized in the following three cases. 

When the initial K\"ahler class is proportional to the first Chern class, i.e.
$$ a_0 (n-1) = b_0 (n+1),$$
 the flow shrinks to a point at the singular time $T= a_0/(n-1)$ (\cite{SWe0}).
It is shown in  \cite{Zhux} that the flow must develop Type I singularities and the rescaled Ricci flow converges in the Cheeger-Gromov-Hamilton sense to the unique compact shrinking K\"ahler Ricci soliton on $X$ constructed in \cite{C1,Koiso,WZ}.

When the initial K\"ahler class satisfies%
$$  a_0 (n-1) > b_0 (n+1),$$
 the flow collapses to $\cp^{n-1}$ at $T= (b_0-a_0)/2$ (\cite{SWe0}). It is shown in \cite{Fo} that the flow must develop Type I singularities and the rescaled flow converges in Cheeger-Gromov-Hamilton sense to the ancient solution that splits isometrically as $\mathbb C^{n-1}\times \cp^1$.

The initial K\"ahler class condition of %
$$  a_0 (n-1) < b_0 (n+1),$$
is equivalent to  the limiting total volume being strictly positive  at the singular time $T =  a_0/(n-1)$, i.e., 
\begin{equation}\label{volnot}
\liminf_{t\to T^-}  Vol(X,g(t))>0,
\end{equation} and 
the flow contracts the exceptional divisor $D_0$ at $T$ (\cite{SWe0}). In fact $  a_0 (n-1) < b_0 (n+1)$ is equivalent to the condition (\ref{volnot}) . It is then shown in \cite{Song1} that the flow \eqref{KRF} must develop Type I singularities and the parabolic blow up of the Type I Ricci flow along the exceptional divisor converges to a complete non-flat shrinking K\"ahler Ricci soliton on a complete manifold {\em diffeomorphic} to $\mathbb C^n$ blown-up at one point.

\begin{theorem}[\cite{Song1}]
Let $X$ be $\cp^n$ blown-up at one point and $E$ be the exceptional divisor. Let $g(t)$ be the $U(n)$-invariant solution to \eqref{KRF} on $X$ on $[0,T)$, where $T\in (0, \infty)$ is the singular time of the flow. If 
\begin{equation*}%\label{volnot1}
\liminf_{t\to T^-}  Vol(X,g(t))>0,
\end{equation*}
 the flow develops type I singularity. 
Moreover, for any sequence $t_j\to T$, we consider the type I parabolic rescaled flows $(X,p, g_j(t))$ defined on $[-\frac{t_j}{T-t_j}, 1)$ by
\begin{equation}\label{scaling metrics}g_j(t) = \frac{1}{T-t_j} g(t_j + t (T-t_j))\end{equation} with a fixed base point $p\in E$.  
Then  there exist a subsequence converging in  Cheeger-Gromov-Hamilton  sense ($C^\infty$-topology) to a complete shrinking non-flat gradient K\"ahler Ricci soliton on a complete K\"ahler manifold {\em diffeomorphic} to $\mathbb C^n$ blown-up at one point.

\end{theorem}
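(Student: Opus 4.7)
\emph{Plan.} The proof combines the $U(n)$-invariant Calabi ansatz with standard Ricci-flow compactness and the Type~I blow-up theorem of \cite{Nab,EMT}. I would break it into four steps.

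\emph{Step 1: Calabi ansatz reduction.} Because $U(n)$-invariance is preserved by \eqref{KRF}, each $\omega(t)$ is encoded by a single convex function $\varphi(s,t)$ of a radial momentum variable $s$ lying in a shrinking interval $[0, b(t)-a(t))$, where $a(t) = a_0 - (n-1)t$ and $b(t) = b_0 - (n+1)t$ track the evolution of the K\"ahler class. The flow reduces to a one-dimensional Monge-Amp\`ere-type parabolic PDE for $\varphi$, and $\mathrm{Rm}$, $\ric$ and the volume form are explicit algebraic functions of $\varphi_s$ and $\varphi_{ss}$. The volume non-degeneracy condition \eqref{volnot} is equivalent to $b(T)-a(T) > 0$, that is, the ``outer'' divisor $D_\infty$ does not meet the exceptional divisor $E$ at the singular time.

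\emph{Step 2: Type~I curvature bound.} Using the scalar reduction of Step~1, I would apply the maximum principle to the PDE satisfied by $(T-t)|\mathrm{Rm}|$ (or, equivalently, to the relevant combinations of $\varphi_s$, $\varphi_{ss}$) to prove $(T-t)|\mathrm{Rm}(g(t))| \le C$ uniformly on $X \times [0,T)$. The key point is that \eqref{volnot} keeps the transverse direction to $E$ uniformly non-degenerate, so the singularity is concentrated strictly along $E$ and the curvature blows up at exactly the parabolic rate; without the gap $b(T)-a(T)>0$ one would instead be in the collapsing regime of \cite{Fo}.

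\emph{Step 3: Parabolic rescaling and Hamilton compactness.} Rescale by $(T-t_j)^{-1}$ based at a point $p \in E$ to obtain the flows $g_j(t)$ of \eqref{scaling metrics}. Step~2 gives uniform curvature bounds on every compact subset of $X \times (-\infty,1)$, and Perelman's $\kappa$-non-collapsing supplies a uniform injectivity-radius lower bound at $p$. Hamilton's Cheeger-Gromov-Hamilton compactness theorem then yields a subsequential smooth limit $(X_\infty, p_\infty, g_\infty(t))$, which is a complete ancient K\"ahler-Ricci flow preserving $U(n)$-symmetry. By \cite{Nab,EMT}, the Type~I blow-up along essential singularities is a nontrivial gradient shrinking Ricci soliton; since $J$ and the $U(n)$-action pass to the limit, it is a K\"ahler-Ricci soliton.

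\emph{Step 4: Identification of the underlying manifold.} This is the main obstacle. In the rescaled Calabi variable $\tilde s = s / (T-t_j)$, the inner endpoint at $E$ stays bounded (because $p \in E$), whereas the outer endpoint
\[
\tilde s_j^{\mathrm{outer}} = \frac{b(t_j)-a(t_j)}{T-t_j}
\]
tends to $+\infty$ precisely because $b_0(n+1) > a_0(n-1)$. Thus $X_\infty$ is a $U(n)$-invariant complete K\"ahler manifold with a single compact $\cp^{n-1}$ zero-section and one non-compact end, namely the total space of $\mathcal O_{\cp^{n-1}}(-1)$, which is $\mathbb C^n$ blown up at the origin. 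Non-flatness is automatic: the zero-section is a compact holomorphic $\cp^{n-1}$ with negative normal bundle, which forces nonzero curvature in its neighborhood and hence in the limit. Tracking the Calabi ansatz data across the limit is routine in principle but requires care because one must check that the rescaled $\varphi$ and its first two $s$-derivatives converge uniformly on compact subsets of the limiting interval $[0, +\infty)$; this, rather than compactness or non-flatness, is where the argument of \cite{Song1} does its real work.
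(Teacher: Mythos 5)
First, a point of comparison: the paper does not prove this statement at all --- it is imported verbatim from \cite{Song1} (the trichotomy is recorded again as Theorem \ref{Song theorem}, and the scalar estimates of Section \ref{section 3} are quoted from \cite{SWe0,Song1}), so there is no internal proof to measure your outline against. As a reconstruction of the strategy of \cite{Song1}, Steps 1--3 are essentially right: the Calabi reduction, the maximum-principle bounds on $u'$, $u''/u'$, $u'''/u''$ and on $u''/\big((u'-a_t)(b_t-u')\big)$ (which, fed into the explicit curvature formulas together with $u'\ge a_t=(n-1)(T-t)$, give the Type I bound), and then Hamilton compactness plus Perelman non-collapsing plus \cite{Nab,EMT}. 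The identification of \eqref{volnot} with $b_T>a_T=0$ and the non-flatness argument via the compact $\cp^{n-1}$ surviving in the limit are both correct.

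The genuine gap is in Steps 3--4, where two things are asserted that require proof. (i) You say the limit ``preserves $U(n)$-symmetry'' and is ``a $U(n)$-invariant complete K\"ahler manifold.'' Cheeger--Gromov convergence is realized by diffeomorphisms $\phi_j$ that carry no a priori compatibility with the $U(n)$-action, so the symmetry does not automatically pass to $X_\infty$; constructing a limit action is exactly what Section \ref{section 4} of this paper labors over, via uniform $C^2$ estimates on the isometries $\chi_{j,\sigma}$ and the equicontinuity estimate of Lemma \ref{lemma 4.2}. (ii) Your conclusion that $X_\infty$ \emph{is} the total space of $\mathcal O_{\cp^{n-1}}(-1)$ identifies the complex structure of the limit, which is precisely what \cite{Song1} does \emph{not} prove and what the present paper's main theorem supplies; the statement you are asked to prove claims only a diffeomorphism, and even that does not follow from the divergence of the rescaled momentum interval alone --- one must show that the exhausting tubular neighborhoods $B_{g_j}(E_j,R)$ converge as disk bundles, i.e.\ that the rescaled potential and its derivatives, together with the bundle projection, converge on each compact piece. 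You flag this last point yourself as ``where \cite{Song1} does its real work,'' which is accurate; but as written, Step 4 both overclaims (biholomorphism type) and underproves (the diffeomorphism statement). Restricting the conclusion to the diffeomorphism type and supplying the compact-set convergence of the Calabi data would close the argument.
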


 It is proved by Feldman-Ilmanen-Knopf \cite{FIK}) that there exists a unique $U(n)$ invariant complete K\"ahler-Ricci gradient shrinking soliton on $\mathbb{C}^n$ blown-up at one point (FIK soliton) and they further made the following conjecture. 

\begin{conjecture}\label{conjecture} Let $g(t)$ be the $U(n)$ invariant metrics satisfying the K\"ahler-Ricci flow on $X=\mathbb{CP}^n$ blown-up at one point for $t\in [0, T)$. Let $T\in (0, \infty)$ be the singular time and $$\liminf_{t\rightarrow T^-} Vol(X, g(t)) >0. $$ Then the flow develops type I singularities and any type I parabolic blow-up limit of $g(t)$ with a fixed base point in the exceptional divisor $E$ is the unique FIK soliton on $\mathbb{C}^n$ blown-up at one point. 

\end{conjecture}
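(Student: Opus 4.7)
The plan is to strengthen the subsequential convergence provided by Song's theorem by showing that every Type I parabolic blow-up limit is $U(n)$-invariant and $U(n)$-equivariantly biholomorphic (not merely diffeomorphic) to $\mathbb{C}^n$ blown up at the origin, and then to invoke the uniqueness of the FIK soliton \cite{FIK} among complete $U(n)$-invariant shrinking gradient K\"ahler-Ricci solitons on this space. Once these two upgrades are in place, the conclusion is essentially formal.

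The first step is to verify $U(n)$-invariance of any subsequential limit $(M_\infty, g_\infty, J_\infty)$. Let $\Phi_j : U_j \to V_j \subset X$ realize the Cheeger-Gromov-Hamilton convergence with $\Phi_j(p_\infty) = p$. Because the parabolic rescaling factor $\lambda_j = 1/(T-t_j)$ matches the contraction rate of the exceptional divisor, the orbit $U(n)\cdot p = E$ has uniformly bounded diameter in each rescaled metric $g_j(0)$, so every $\sigma \in U(n)$ moves $p$ within a fixed ball around $p$. Pulling back via $\Phi_j^{-1}$ gives isometries of $g_j$ that are equicontinuous on compact subsets, to which I apply Arzel\`a-Ascoli along a countable dense subset of $U(n)$; density and continuity extend this to a smooth holomorphic isometric $U(n)$-action on $(M_\infty, g_\infty, J_\infty)$, the complex structure passing to the limit by the K\"ahler nature of the convergence and the integrability of the limit almost complex structure.

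The second step is the $U(n)$-equivariant identification of $M_\infty$ with the blow-up of $\mathbb{C}^n$ at the origin. The limit $E_\infty \subset M_\infty$ of the exceptional divisor is a compact holomorphic $U(n)$-orbit, which, as a homogeneous $U(n)$-space with stabilizer $U(n-1)\times U(1)$, must be biholomorphic to $\cp^{n-1}$. On the complement $M_\infty \setminus E_\infty$ the $U(n)$-orbits are parameterized by a single radial variable coming from the $U(1)$ factor, allowing the Calabi ansatz expression of $\omega_\infty$ in terms of a profile function of a $U(1)$ moment-map coordinate. Completeness of $g_\infty$, combined with the identification of the normal bundle of $E_\infty$ as $\mathcal{O}_{\cp^{n-1}}(-1)$ (inherited as a topological invariant from the normal bundle of $E \subset X$ under the smooth convergence), then identifies $M_\infty$ $U(n)$-equivariantly with the total space of $\mathcal{O}(-1)$, i.e.\ $\mathbb{C}^n$ blown up at the origin.

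With $(M_\infty, g_\infty)$ thus realized as a complete $U(n)$-invariant non-flat shrinking gradient K\"ahler-Ricci soliton on the blow-up of $\mathbb{C}^n$ at the origin, the soliton equation reduces to the radial ODE system solved by Feldman-Ilmanen-Knopf, whose smooth complete solutions form a one-parameter family indexed by the soliton scale; since the Type I parabolic rescaling fixes this scale, $g_\infty$ must be the FIK soliton. Every subsequence having the same limit then forces the full family $g_j(t)$ to converge to the FIK soliton, establishing Conjecture \ref{conjecture}. I expect the main obstacle to lie in the equivariant biholomorphic identification of step two: the Cheeger-Gromov diffeomorphisms $\Phi_j$ are not a priori $U(n)$-equivariant, and pinning down the normal bundle of $E_\infty$ as $\mathcal{O}(-1)$ rather than some other $U(n)$-equivariant line bundle over $\cp^{n-1}$, while ruling out exotic global complex structures on $M_\infty$, requires careful bookkeeping through the parabolic rescaling together with the completeness and the soliton structure of $g_\infty$.
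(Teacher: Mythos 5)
Your outline reproduces the paper's high-level architecture (construct a limit $U(n)$-action by Arzel\`a--Ascoli, identify the limit as $\mathcal O_{\cp^{n-1}}(-1)$, reduce to the Calabi ansatz, invoke FIK uniqueness), but there is a genuine gap at the step you yourself flag as the hard one, and the mechanism you propose there does not work. You claim the holomorphic normal bundle of $E_\infty$ is $\mathcal O_{\cp^{n-1}}(-1)$ because it is ``inherited as a topological invariant\ldots under the smooth convergence.'' This is precisely where topology is insufficient: the total spaces of $\mathcal O(-k)$ for odd $k$ in the relevant range are diffeomorphic to one another while carrying distinct complex structures, and each carries its own $U(n)$-invariant complete shrinking soliton \`a la FIK. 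Cheeger--Gromov--Hamilton convergence transports the complex structures $J_j\to J_\infty$ only through non-equivariant diffeomorphisms, so nothing a priori prevents the limit from being a different $\mathcal O(-k)$, or the complex structure from jumping. The paper closes this by a quantitative effectiveness argument (Lemma \ref{effective lemma}): the two-sided bound $c(n,R)\,\omega_{\mathbb C^n}/|z|^2\le g_j\le C(n,R)\,\omega_{\mathbb C^n}/|z|^2$ on annuli about $E_j$ shows that any $\sigma$ in the isotropy group $U_p$ acting trivially on the limit fiber must already act with vanishing rotation angle on the circle $F_j^{-1}(p)\cap S^{2n-1}$; if the bundle were $\mathcal O(-k)$ with $k\ne 1$, the element $\mathrm{diag}(e^{2\pi i/k},A)$ would act trivially on the fiber without being of that form, a contradiction. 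You acknowledge the difficulty in your closing sentence but supply no argument for it, and topological inheritance cannot supply one. (Relatedly, your identification of $E_\infty$ as a single compact $U(n)$-orbit and of $F_\infty$ as a genuine holomorphic fiber bundle is asserted rather than proved; the paper needs the uniform $C^2$ estimates on the bundle maps $F_j$, the limit holomorphic vector field $V_\infty$, and an $SL(n,\mathbb C)$-homogeneity argument to get this.)

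A second, smaller gap: even granting the $U(n)$-structure and $k=-1$, the limit could a priori be the \emph{disk subbundle} of $\mathcal O(-1)$. The fibers are complete simply connected Riemann surfaces, hence either $\mathbb C$ or the disk $D$ by uniformization, and completeness of $g_\infty$ does not by itself exclude $D$ (complete metrics on disks exist). The paper excludes this only after reducing the soliton equation to the FIK first-order ODE for $\phi'=F(\phi)$ and showing that the parameter ranges corresponding to a finite maximal $\rho$ or bounded $\phi$ (the cases $\mu<0$ and $\nu\ne 0$) contradict either the unbounded volume or the completeness of $g_\infty$; this forces $\nu=0$, $\mu>0$, global existence in $\rho$, and hence the full line bundle and the FIK profile. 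Your phrase ``completeness of $g_\infty$\ldots then identifies $M_\infty$'' elides this entire analysis, which is where the disk bundle is actually ruled out and the soliton is pinned down.
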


This conjecture was partially established by Maximo (\cite{Ma}) when the dimension $n=2$ under certain open conditions on the initial metric.  Our main result in this paper is to show that in the non-collapsed case, the blow-up limit of the K\"ahler Ricci flow is {\em biholomorphic} to $\mathbb C^n$ blown-up at one point and the limit K\"ahler Ricci soliton is the FIK soliton constructed in \cite{FIK} on $\mathbb C^n$ blown-up at one point, hence establishing Conjecture \ref{conjecture}. Our main theorem is
%%%%%main theorem
%
\begin{theorem}\label{main theorem}Let $X$ be $\cp^n$ blown-up at one point and $E$ be the exceptional divisor. Let $g(t)$ be the $U(n)$-invariant solution to \eqref{KRF} on $X$ on $[0,T)$. If 
\begin{equation*}%\label{volnot2}
\liminf_{t\to T^-}  Vol(X,g(t))>0,
\end{equation*}
we fix any base point $p\in E$ and  let $(X_\infty, p_\infty, g_\infty)$ be the Cheeger-Gromov-Hamilton limit of $(X,p, g_j(t))$, where $g_j(t)$ is defined by \eqref{scaling metrics}. Then $(X_\infty, p_\infty, g_\infty)$  is biholomorphic to $\mathbb C^n$ blown-up at one point and $g_\infty$ is a complete, $U(n)$ symmetric K\"ahler Ricci soliton metric, hence is one of the FIK solitons constructed in \cite{FIK}. 
\end{theorem}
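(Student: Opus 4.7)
I plan to build directly on the theorem of \cite{Song1} cited above, which already provides a complete non-flat shrinking gradient K\"ahler--Ricci soliton $(X_\infty, g_\infty)$ on a manifold only known to be \emph{diffeomorphic} to $\mathbb C^n$ blown-up at the origin. The Cheeger--Gromov--Hamilton convergence is realized by diffeomorphisms $\Phi_j$ from an exhaustion of $X_\infty$ into $X$. Pulling the fixed complex structure $J$ of $X$ back gives a sequence $\Phi_j^\ast J$ on $X_\infty$, and since the K\"ahler condition is closed under $C^\infty_{\mathrm{loc}}$-convergence, $\Phi_j^\ast J \to J_\infty$ for an integrable complex structure $J_\infty$ making $(X_\infty, J_\infty, g_\infty)$ K\"ahler. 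The $U(n)$-action on $X$ preserves both $J$ and every $g(t)$, and by a standard equivariant refinement of the $\Phi_j$ (using the compactness of $U(n)$) it transfers to a holomorphic isometric $U(n)$-action on the limit, whose fixed set contains the limit exceptional divisor $E_\infty \cong \cp^{n-1}$. The task then reduces to upgrading the diffeomorphism to a biholomorphism, after which the uniqueness statement of \cite{FIK} identifies $g_\infty$ with the FIK soliton.

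To identify the complex structure, I plan to exploit the cohomogeneity-one $U(n)$-action through the Calabi ansatz. On $X\setminus D_\infty$, which is biholomorphic to $\mathrm{Bl}_0\mathbb C^n$ in its standard complex structure, every $U(n)$-invariant K\"ahler form can be written as $\ddbar \varphi(r)$ with $r=\log|z|^2$ and $\varphi$ strictly convex, subject to well-known boundary conditions at $r\to-\infty$ ensuring smooth extension across $E$ and at $r\to+\infty$ across $D_\infty$. Writing each rescaled flow $g_j(t)$ in this form, combining the type I curvature bound with the $U(n)$-invariant estimates of \cite{Song1}, and using convexity, I would extract uniform $C^\infty_{\mathrm{loc}}$ control on $\varphi_j(r,t)$ on compact intervals of $r$. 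Passing to a smooth convex limit $\varphi_\infty(r,t)$ and matching it with the $U(n)$-orbit coordinate on $X_\infty$ produced by the equivariant $\Phi_j$ should force the complex structure on $X_\infty\setminus E_\infty$ to coincide with the standard one on $\mathbb C^n\setminus\{0\}$.

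The delicate step, and the one I expect to be the main obstacle, is the behaviour across $E_\infty$, where the Calabi ansatz degenerates. To handle it, I plan to control the holomorphic normal bundle $N_{E_\infty/X_\infty}$ directly: since $N_{E/X} = \mathcal{O}_{\cp^{n-1}}(-1)$ and this bundle structure is $U(n)$-equivariant and preserved under the parabolic rescaling, the $C^\infty$-convergence should yield $N_{E_\infty/X_\infty} = \mathcal{O}_{\cp^{n-1}}(-1)$ with the standard $U(n)$-action. Combined with the identification of the complex structure on the complement from the previous step, this forces $(X_\infty, J_\infty)$ to be biholomorphic to the total space of $\mathcal{O}_{\cp^{n-1}}(-1)$, i.e., $\mathbb C^n$ blown-up at one point. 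At that point $g_\infty$ is a complete $U(n)$-invariant shrinking K\"ahler--Ricci soliton on $\mathbb C^n$ blown-up at one point, and the uniqueness statement of \cite{FIK} closes the argument. The bulk of the work, as I see it, is in making the equivariant Calabi-coordinate matching rigorous in a fixed neighborhood of $E_\infty$ and verifying that the normal-bundle identification survives the limit; everything else is a packaging argument around \cite{Song1} and \cite{FIK}.
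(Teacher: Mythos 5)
Your overall skeleton (transfer the $U(n)$-action to the limit, exploit the Calabi ansatz, identify the bundle, invoke the uniqueness in \cite{FIK}) is the same as the paper's, but two steps that you flag as "the delicate part" or treat as packaging are in fact where the content lies, and as stated they have genuine gaps. First, "uniform $C^\infty_{\mathrm{loc}}$ control on $\varphi_j(r,t)$ on compact intervals of $r$" is not the right statement: under the type I rescaling $g_j=\frac{1}{T-t_j}g(t_j)$, a metric ball $B_{g_j}(E_j,R)$ corresponds to a $j$-dependent range $\rho\le\rho_{j,R}$ of the Calabi coordinate, and the whole difficulty is to control $u'(\rho,t_j)/(T-t_j)$ and $u''(\rho,t_j)/(T-t_j)$ on these moving regions. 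The paper gets this (Lemma \ref{estimate lemma}) only by combining the Cao--Zhou volume upper bound for shrinking solitons with Perelman non-collapsing and the explicit volume formula \eqref{volume formula}; without such an input, "matching the $U(n)$-orbit coordinate" does not start. Relatedly, the normal-bundle argument is exactly the point where a jump of complex structure could occur: the total spaces of $\mathcal O_{\cp^{n-1}}(-k)$ for odd $k<n$ are diffeomorphic but not biholomorphic, and $N_{E_\infty/X_\infty}$ is not obviously continuous under Cheeger--Gromov limits — one must first locate $E_\infty$ inside $X_\infty$ (the paper does this as the zero set of a limit holomorphic vector field $V_\infty$, built with $C^2$ estimates) and then prove an effectiveness statement for the isotropy action on the limit fiber (Lemma \ref{effective lemma}), which is what actually pins down $k=-1$. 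Your proposal asserts the conclusion of that lemma rather than proving it.

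Second, even after all of that, your argument only shows $X_\infty$ is a $U(n)$-invariant neighborhood of the zero section in $\mathcal O_{\cp^{n-1}}(-1)$: the disk subbundle has the same normal bundle, the same diffeomorphism type, and a complement embedding in $\mathbb C^n\setminus\{0\}$, and it can carry complete metrics, so nothing you have said excludes it. The paper must rule it out (Corollary \ref{main cor} deliberately leaves it open) and does so only in Section \ref{section 5} by writing the soliton equation as the ODE \eqref{equation solution} for $\phi=u_\infty'$ and showing $\mu>0$, $\nu=0$ — using unboundedness of the volume to kill $\mu<0$ and $\nu<0$, and completeness of $g_\infty$ to kill $\nu>0$ — so that $\phi$ exists for all $\rho\in(-\infty,\infty)$ and $X_\infty$ is the full line bundle. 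This step is absent from your proposal and cannot be absorbed into "the uniqueness statement of \cite{FIK}", since that uniqueness presupposes the underlying complex manifold is all of $\mathcal O_{\cp^{n-1}}(-1)$.
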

For $n\geq 2$, there exist infinitely many distinct complex structures on $\mathbb{R}^{2n}$ and so on its complex blow-up at a point. $U(n)$ symmetry in the complex setting is more complicated than $O(2n)$ symmetry in the real setting due to the complex structures, in particular, the complex structures might possibly degenerate or jump the variation limits.   For example, the manifolds $\mathcal O_{\cp^{n}}(-k)$ with odd $1\le k<n$ are all  diffeomorphic, but as complex manifolds they admit different complex structures and hence different $U(n)$-invariant complete shrinking K\"ahler Ricci soliton metrics (\cite{FIK}). Our strategy is (1) to construct a $U(n)$-action on the limit manifold $X_\infty$, which is holomorphic with respect to the limit complex structure on $X_\infty$, (2) to construct a holomorphic fiber bundle map $F_\infty:X_\infty\to \cp^{n-1}$, and (3) to show this fiber bundle is in fact $\mathcal O_{\cp^{n-1}}(-1)$ and the limit metric $g_\infty$ is $U(n)$-invariant. Our proof can also be applied to the K\"ahler-Ricci flow on $\mathbb{P}( \mathcal{O}_{\mathbb{CP}^{n-1}} \oplus \mathcal{O}_{\mathbb{CP}^{n-1}}(-k))$ with $U(n)$-invariant initial K\"ahler metric for $1\leq k \leq n-1$ and as long as the total volume does not tend to $0$ at the singular time, the flow must develop type I singularities and the type I blow-up limit along the exceptional divisor must be the unique $U(n)$-invariant complete shrinking gradient K\"ahler-Ricci soliton on $\mathcal{O}_{\mathbb{CP}^{n-1}}(-k)$ constructed in \cite{FIK}.

This paper is organized as follows. In Section \ref{section 2}, we collect some known facts about the K\"ahler Ricci flow with $U(n)$ symmetry on $X$. In Section \ref{section 3}, we prove some a priori estimates and construct the limit map $F_\infty: X_\infty\to \cp^{n-1}$ and limit holomorphic vector field $V_\infty$ on $X_\infty$. In Section \ref{section 4}, we show that the $U(n)$ actions on $X_j$ can pass to the limit $X_\infty$ and we can define a $U(n)$-action on $X_\infty$, and prove that $X_\infty$ is either the holomorphic line bundle $\mathcal O_{\cp^{n-1}}(-1)$ or the disk subbundle of $\mathcal O_{\cp^{n-1}}(-1)$. In Section \ref{section 5} we finish the proof of Theorem \ref{main theorem} by showing that if $X_\infty$ is the disk bundle in $\mathcal O_{\cp^{n-1}}(-1)$ then the limit metric $g_\infty$ cannot be complete, hence $X_\infty$ is $\mathcal O_{\cp^{n-1}}(-1)$.

\medskip
Throughout this paper, we will use $\omega$ to denote the K\"ahler form of a K\"ahler metric $g$, without specifically mentioning this. And  $C$ will denote a uniform constant depending only on the dimension $n$ and the initial K\"ahler metric, which may be different from line to line. 

\bigskip

\noindent{\bf Acknowledgment:} We would like to thank  Xiaowei Wang for many helpful discussions.

%%%%%%% section 2
\section{Preliminaries}\label{section 2}
In this section, we collect some backgrounds and known results about the flow \eqref{KRF}.
%%%%subsection 2.1
\subsection{Calabi symmetry}Let $X=\cp^n\#\overline{\cp^n}$ be $\mathbb{CP}^n$ blown-up at one point and it is a $\mathbb{CP}^1$ bundle over $\cp^{n-1}$ given by
\begin{equation}\label{fiber structure}X = \mathbb P(\mathcal O_{\cp^{n-1}}\oplus \mathcal O_{\cp^{n-1}}(-1)).\end{equation}
Let $D_0$ be the exceptional divisor of $X$ defined by the image of the section $(1,0)$ of $\mathcal O_{\cp^{n-1}}\oplus \mathcal O_{\cp^{n-1}}(-1)$ and $D_\infty$ be the divisor  defined by the image of the section $(0,1)$ of $\mathcal O_{\cp^{n-1}}\oplus \mathcal O_{\cp^{n-1}}(-1)$. Both divisors $D_0$ and $D_\infty$ are complex hypersurfaces isomorphic to $\cp^{n-1}$. The K\"ahler cone on $X$ is given by  $$\mathcal K = \{-a[D_0] + b[D_\infty]~|~0<a<b\}.$$
Let $z = (z_1,\ldots,z_n)$ be the standard complex coordinates on $\mathbb C^n$. Define $\rho = \log \abs{z} = \log(\abs{z_1}+\cdots +\abs{z_n})$ on $\mathbb C^n\backslash\{0\}$. 
\begin{defn}
A smooth convex function $u = u(\rho)$ for $\rho\in (-\infty,\infty)$ is said to satisfy  the {\em Calabi symmetry conditions}, if
\begin{enumerate}[label=(\arabic*)]
\item $u''(\rho)>0, u'(\rho)>0$ for $\rho\in (-\infty,\infty)$,
\item There exist $0<a<b$ and smooth functions $U_0, U_\infty: [0,\infty)\to \mathbb R$ such that $$U_0'(0)>0, \quad U_\infty'(0)>0,$$
$$u(\rho) = a\rho + U_0(e^\rho) \quad \text{near }\rho = -\infty,$$
$$u(\rho) = b\rho + U_\infty(e^{-\rho})\quad \text{near }\rho = +\infty.$$
\end{enumerate}
\end{defn}
It is known (\cite{Calabi}) that a metric $\omega = \ddbar u$ which defines a smooth K\"ahler metric on $\mathbb C^n\backslash\{0\}$  extends to a K\"ahler metric on $X=\cp^n\# \overline{\cp^n}$ if and only if $u$ satisfies the Calabi symmetry condition, and it defines a K\"ahler metric in the class $-a[D_0] + b[D_\infty]$.

On $\mathbb C^n\backslash  \{0\}$, the K\"ahler metric $\omega = \ddbar u$ is given by
\begin{equation}\label{expansion of omega}
\omega =\sqrt{-1}g_{i\bar j}dz_i\wedge d\bar z_j =   \bk{e^{-\rho} u'\delta_{ij} + e^{-2\rho} \bar z_i z_j(u''-u')} \sqrt{-1}dz_i\wedge d\bar z_j.
\end{equation}
The metric $\omega$ is invariant under the standard unitary $U(n)$-actions on $\mathbb C^n$, hence also invariant under the induced $U(n)$-actions on $X$, i.e. $U(n)\subset \mathrm{Isom}(X,\omega)$, the isometry group of $\omega$.

On $\mathbb C^n\backslash\{0\}$, $\det (g_{i\bar j}) = e^{-n\rho} (u')^{n-1} u''$ and the Ricci potential of $\omega=\ddbar u$ is 
$$v = -\log \det g_{i\bar j} = n\rho  - (n-1)\log u' - \log u'',$$ and Ricci curvature tensor of $\omega$ is given by
$$R_{i\bar j} = e^{-\rho} v' \delta_{ij} + e^{-2\rho}\bar z_i z_j (v'' - v').$$
It is known (\cite{SWe0}) that the Calabi symmetry is preserved by the K\"ahler Ricci flow \eqref{KRF}, in other words, the evolving K\"ahler metrics $\omega(t)$ of \eqref{KRF} is invariant under $U(n)$-action if the initial metric $\omega_0$ is $U(n)$-invariant. In \cite{SWe0} it is shown that \eqref{KRF} can be reduced to the following parabolic equation for $u = u(\rho,t)$
\begin{equation}\label{scalar KRF}
\frac{\partial}{\partial t} u(\rho,t) = \log u''(\rho,t) + (n-1)\log u'(\rho,t) - n\rho,
\end{equation}
where the evolving metrics $\omega(t)$ are given by $\omega(t) = \ddbar u(\rho,t)$. If the initial K\"ahler metric $\omega(0)\in -a_0[D_0] + b_0 [D_\infty]$, then the evolving K\"ahler class is given by
$$\omega(t)\in -a_t [D_0] + b_t [D_\infty], \quad \text{with }a_t = a_0 - (n-1) t, ~ b_t = b_0 - (n+1)t.$$

We will identify the zero section $D_0\subset X$ as the exceptional divisor  $E\cong\cp^{n-1}$ in $\mathbb C^n$ blown-up at the origin, and $\mathbb C^n\subset\cp^n$. Under the $U(n)$ invariant metric $g=\omega = \ddbar u$, the distance from a point $z\in \mathbb C^{n}\backslash\{0\}$ to $E$ is given by
$$d_g( z, E) = \frac{1}{2}\int_{-\infty}^{\log \abs{z}} \sqrt{u''(\rho)}d\rho.$$
The Calabi symmetry condition (2) above implies this distance is finite for finite $z(\neq 0)$.

We define the tubular neighborhood  $B_g(E,R)$ of $E$ (in the following we also call $B_g(E,R)$ as metric balls centered at $E$) as
$$B_g(E,R):=\{q\in X~|~ d_g(q,E)\le R\},$$ which (for $R$ small) can be identified as $\pi^{-1}(B)$ for some Euclidean ball $B\subset \mathbb C^n$ centered at $0$ and $\pi: \widetilde{\mathbb C^n}\to \mathbb C^n$ is the blown-up map of $\mathbb C^n$ at $0$. The volume of $B_g(E,R)$ with respect to the metric $\omega = \ddbar u$ is given by
\begin{equation}\label{volume formula}\int_{B(E,R)}\omega^n = C(n)\int_{-\infty}^{\rho_R} (u'(\rho))^{n-1} u''(\rho)d\rho,\end{equation}
for some constant $C(n)$ depending only on the dimension and $\rho_R$ is the unique constant determined by the equation
\begin{equation}\label{rho R}R = \frac 12 \int_{-\infty}^{\rho_R} \sqrt{u''(\rho)}d\rho,\end{equation} i.e., a point $z\in \mathbb C^n\backslash \{0\}$ with $\log \abs{z}= \rho_R$ satisfies $z\in \partial B_g(E,R)$.

We recall the following formulas of gradient and Laplacian of a $U(n)$ invariant function, which follow from direct calculations so we omit the proof.
\begin{lemma}\label{gradient and Laplacian}
Suppose $f$ is a $U(n)$-invariant function on $X$, then with respect to the metric $\omega = \ddbar u$, we have
\begin{equation*}
\abs{\nabla f}_{\omega} = \frac{(f')^2}{u''},\quad \Delta_\omega f = (n-1)\frac{f'}{u'} + \frac{f''}{u''},
\end{equation*}
where as usual for the function $f$, $f' = \frac{\partial}{\partial \rho}f$, $f'' = \frac{\partial^2 f}{\partial \rho^2}$.
\end{lemma}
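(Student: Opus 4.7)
The plan is a direct coordinate calculation on $\mathbb{C}^n\setminus\{0\}$, reduced to a single convenient point by $U(n)$-symmetry. Since both sides of the claimed identities are themselves $U(n)$-invariant real-valued functions, it suffices to verify them pointwise at any chosen representative of each $U(n)$-orbit.

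First, I would observe that $U(n)$-invariance forces $f$ to be a function of $\rho = \log|z|^2$ alone, so $f = f(\rho)$. The chain rule gives $\partial_i\rho = e^{-\rho}\bar z_i$ and $\partial_{\bar j}\rho = e^{-\rho}z_j$, from which
$$f_i := \partial_i f = f'(\rho)\,e^{-\rho}\bar z_i, \qquad f_{i\bar j} := \partial_i\partial_{\bar j}f = e^{-\rho}f'\delta_{ij} + e^{-2\rho}\bar z_i z_j\bigl(f'' - f'\bigr).$$
Note this Hessian has exactly the same structural form as $g_{i\bar j}$ in \eqref{expansion of omega} (with $u$ replaced by $f$), which foreshadows the cancellation.

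Next, I would evaluate at the convenient orbit representative $p_0 = (e^{\rho/2},0,\ldots,0)$. There the cross terms $\bar z_i z_j$ vanish unless $i=j=1$, while $\bar z_1 z_1 = e^\rho$. Substituting into \eqref{expansion of omega}, the metric $g_{i\bar j}$ is diagonal at $p_0$ with entries
$$g_{1\bar 1}(p_0) = e^{-\rho}u'', \qquad g_{i\bar i}(p_0) = e^{-\rho}u' \quad (i\ge 2),$$
so the inverse is diagonal with $g^{1\bar 1}(p_0) = e^\rho/u''$ and $g^{i\bar i}(p_0) = e^\rho/u'$ for $i\ge 2$. The same simplification applies to $f_{i\bar j}$: $f_{1\bar 1}(p_0) = e^{-\rho}f''$, $f_{i\bar i}(p_0) = e^{-\rho}f'$ for $i\ge 2$, and $f_{i\bar j}(p_0) = 0$ otherwise. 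For the first derivatives only $f_1(p_0) = f_{\bar 1}(p_0) = f'\,e^{-\rho/2}$ are nonzero.

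The two formulas then follow by one line each. For the gradient,
$$|\nabla f|^2_\omega\bigl|_{p_0} = g^{i\bar j}f_i f_{\bar j} = g^{1\bar 1}(p_0)\,|f_1(p_0)|^2 = \frac{e^\rho}{u''}\cdot(f')^2 e^{-\rho} = \frac{(f')^2}{u''}.$$
For the Laplacian,
$$\Delta_\omega f\bigl|_{p_0} = \sum_i g^{i\bar i}(p_0)\,f_{i\bar i}(p_0) = \frac{e^\rho}{u''}\cdot e^{-\rho}f'' + (n-1)\,\frac{e^\rho}{u'}\cdot e^{-\rho}f' = \frac{f''}{u''} + (n-1)\frac{f'}{u'}.$$
There is essentially no obstacle; the only care needed is bookkeeping of index conventions. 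One could equivalently invert $g_{i\bar j}$ globally via Sherman–Morrison (using the rank-one structure $g_{i\bar j} = e^{-\rho}u'\delta_{ij} + e^{-2\rho}(u''-u')\bar z_i z_j$), but the symmetry reduction makes that inversion unnecessary.
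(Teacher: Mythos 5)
Your calculation is correct: the Hessian formula for $f(\rho)$, the diagonalization at the orbit representative $(e^{\rho/2},0,\ldots,0)$, and the resulting contractions all check out against \eqref{expansion of omega}, and your conventions for $|\nabla f|^2_\omega=g^{i\bar j}f_if_{\bar j}$ and $\Delta_\omega f=g^{i\bar j}f_{i\bar j}$ agree with how the lemma is used elsewhere in the paper. The paper omits the proof precisely because it is this direct computation, so your argument is exactly the intended one.
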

%%%%%%%%%subsection 2.1
\subsection{Type I solutions}
Recall the Ricci flow \eqref{KRF} is said to develop {\em Type I singularity} if 
$$\sup_{(x,t)\in X\times [0,T)} (T-t)|Rm|(x,t)<\infty,$$ where $T\in (0,\infty)$ is the singular time.

\begin{theorem}[\cite{Song1, Fo, Zhux}] \label{Song theorem}
Let $X$ be $\cp^n$ blown-up at one point. Then the K\"ahler Ricci flow \eqref{KRF} on $X$ must develop Type I singularities  for any $U(n)$ invariant initial K\"ahler metric. 

Let $g(t)$ be the solution on $[0,T)$. For any $t_j\to T$, we consider the rescaled flows $(X,g_j(t))$ defined on $[-\frac{t_j}{T-t_j}, 1)$ by
$$g_j(t) = \frac{1}{T-t_j} g(t_j + t (T-t_j)).$$
Then one and only one of the following must occur.
\begin{enumerate}[label=(\arabic*)]
\item (\cite{Song1}) If $\liminf_{t\to T} (T-t)^{-1} Vol(X,g(t)) = \infty$, then $(X,g_j(t), p)$ sub-converges in $C^\infty$ Cheeger-Gromov-Hamilton (CGH) sense to a complete shrinking non-flat gradient K\"ahler Ricci soliton on a complete K\"ahler manifold {\em diffeomorphic} to $\widetilde {\mathbb C^n}$, for any fixed point $p\in E$, the exceptional divisor.
\item (\cite{Fo}) If $\liminf_{t\to T}(T-t)^{-1} Vol(X,g(t))\in (0,\infty)$, then $(X,g_j(t),p_j)$ sub-converges in $C^\infty$-CGH sense to $(\mathbb C^{n-1}\times \mathbb {CP}^1, g_{\mathbb C^{n-1}}\oplus (-t)g_{FS})$, where $g_{\mathbb C^{n-1}}$ is the standard flat metric on $\mathbb C^{n-1}$ and $g_{FS}$ is the Fubini-Study metric on $\cp^1$ for any sequence of points $p_j$.

\item(\cite{Zhux}) If $\liminf_{t\to T}(T-t)^{-1} Vol(X,g(t)) = 0$, then $(X,g_j(t))$ converges in $C^\infty$-CGH sense to the unique compact shrinking K\"ahler Ricci soliton on $\cp^n$ blown-up at one point.

\end{enumerate}
\end{theorem}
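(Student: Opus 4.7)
The strategy is to exploit $U(n)$-symmetry to reduce the Ricci flow to the scalar parabolic equation \eqref{scalar KRF} for the Calabi potential $u(\rho,t)$, to prove the Type I bound by maximum-principle arguments on this reduced equation, and then to split the analysis according to how the K\"ahler class $[\omega(t)] = -a_t[D_0] + b_t[D_\infty]$ with $a_t = a_0 - (n-1)t$, $b_t = b_0-(n+1)t$ degenerates at the singular time $T$. The three volume regimes $V(t)/(T-t) \to \infty$, $V(t)/(T-t) \in (0,\infty)$, $V(t)/(T-t) \to 0$ correspond respectively to (1) $a_T = 0 < b_T$ so $D_0$ contracts while $V(T^-)>0$, (2) $b_T = 0 < a_T$ so $X$ collapses to $\cp^{n-1}$ with $V(t) \sim c(T-t)$, and (3) $a_T = b_T = 0$ (the Fano case, where the whole manifold shrinks to a point). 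The reduction and the resulting volume dichotomy are computed directly from the class formula in Section \ref{section 2}.

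\textbf{Type I bound.} In Calabi coordinates every component of the curvature tensor of $\omega = \ddbar u$ is a universal rational expression in $u', u'', u''', e^{-\rho}$. Differentiating \eqref{scalar KRF} produces scalar PDE for $u', u''$ and $u'''$; by the maximum principle applied to suitably normalized combinations such as $(T-t)u''$ and $(T-t)|u'''|^2/u''$, with the $\rho \to \pm \infty$ boundary behavior controlled by the Calabi symmetry conditions on $u$, one reassembles these bounds into $(T-t)|Rm| \le C$. This is the common technical core of \cite{Song1,Fo,Zhux}, and the proof in each case is similar but must be run separately because the dominant curvature quantity differs across the three regimes.

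\textbf{Limit identification.} For case (3), $[\omega_0]$ is proportional to $c_1(X)$, so Perelman's no-collapsing and uniform diameter/scalar-curvature estimates for K\"ahler-Ricci flow on Fano manifolds apply (cf.\ \cite{SeTi,TiZz}); together with the Type I bound they force smooth CGH sub-convergence of $(X,g_j(t))$ to a compact shrinking K\"ahler-Ricci soliton on $X$, which by uniqueness of $U(n)$-invariant shrinkers on blown-up $\cp^n$ \cite{C1,Koiso,WZ} must be the Koiso-Cao soliton. For case (2), pick base points $p_j$ at bounded $g(t_j)$-distance from $D_\infty$ and analyze the asymptotics of $u(\rho, t_j + t(T-t_j))$ for $\rho \to \infty$ using \eqref{scalar KRF}; the Type I bound together with the linear decay $V(t) \sim c(T-t)$ forces the $\cp^1$ fiber to remain of bounded rescaled radius while the other $n-1$ complex directions open up flat, yielding the isometric splitting $(\mathbb{C}^{n-1}, g_{\mathbb{C}^{n-1}}) \oplus (\cp^1, (-t) g_{FS})$ as in \cite{Fo}. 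For case (1), the fixed base point $p \in E$ sees rescaled unit-ball volume bounded below by a constant multiple of $V(T^-)>0$, which gives non-collapsing; Type I plus the theorem of Naber and Enders-M\"uller-Topping \cite{Nab,EMT} then produces a complete non-flat shrinking gradient Ricci soliton as the CGH limit, and tracking $U(n)$-orbit radii (the only fixed orbit is $E$, which survives as a compact $\cp^{n-1}$ in the limit) identifies the underlying smooth manifold with $\mathbb{C}^n$ blown-up at one point.

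\textbf{Main obstacle.} The most delicate step at the level of the trichotomy is the Type I estimate itself: curvature in Calabi form mixes $u', u'', u'''$ in a way that requires a careful bootstrap among normalized quantities to make the maximum principle close, and the argument must be adjusted in each of the three regimes. A secondary subtlety is the choice of base point in case (2), where a base point sitting where the fiber $\cp^1$ is collapsing would yield a trivial rescaled limit; one must therefore argue that choosing $p_j$ near the surviving divisor $D_\infty$ recovers the product blow-up geometry in the statement. Finally, upgrading ``diffeomorphic to $\widetilde{\mathbb{C}^n}$'' to ``biholomorphic to $\widetilde{\mathbb{C}^n}$'' and identifying the limit with an FIK soliton in case (1) is \emph{not} achieved at this level and is the content of Theorem \ref{main theorem}.
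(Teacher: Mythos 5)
The paper does not prove this theorem: it is quoted as Theorem 2.1 with attributions to \cite{Song1}, \cite{Fo} and \cite{Zhux}, and the body of the paper takes it as input (only case (1) is actually used, as the hypothesis of Theorem 1.2). So there is no in-paper argument to compare yours against; I can only measure your sketch against the structure of the cited proofs. At that level your architecture --- reduce to the scalar flow \eqref{scalar KRF}, prove Type I by maximum-principle bounds on normalized combinations of $u',u'',u'''$, then identify the blow-up limit separately in each volume regime using Perelman non-collapsing and Naber/Enders--M\"uller--Topping in case (1) --- is the right one, but it remains an outline rather than a proof: the Type I estimate, which you yourself flag as the main obstacle, is only gestured at, and closing the maximum principle on the fourth-derivative quantities that enter the curvature (compare the scalar curvature formula \eqref{scalar curvature}) is precisely the hard content of \cite{Song1,Fo,Zhux}.

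There is also one concrete error in your reduction of the trichotomy. You assign case (2) to the degeneration $b_T=0<a_T$. This configuration is impossible: the K\"ahler cone is $\{-a[D_0]+b[D_\infty]\;:\;0<a<b\}$, so $b_t$ can never reach $0$ while $a_t>0$. In the collapsing regime the class leaves the cone when $b_t-a_t=(b_0-a_0)-2t\to 0$, i.e.\ $a_T=b_T>0$ at $T=(b_0-a_0)/2$; it is the fiber class $[D_\infty]-[D_0]$ of the $\cp^1$-fibration that degenerates, and since $Vol\propto b_t^n-a_t^n\sim 2n\,a_T^{n-1}(T-t)$ this is exactly what produces the linear volume decay and the collapse onto $\cp^{n-1}$. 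Relatedly, your ``secondary subtlety'' about having to choose $p_j$ near $D_\infty$ in case (2) contradicts the statement you are proving (``for any sequence of points $p_j$'') and is unnecessary: by the $U(n)$-symmetry every $\cp^1$ fiber shrinks at the same rate, so the rescaled product limit appears at every base point. The rest of your case analysis (case (3) with $a_T=b_T=0$ giving $Vol\sim c(T-t)^n$, case (1) with $a_T=0<b_T$ giving $Vol\to Vol_T>0$) is correct, modulo the sign/index inconsistencies in the proportionality condition that, to be fair, also appear in the paper's own introduction; the version used in Section 2, namely $a_0(n+1)<b_0(n-1)$ for the non-collapsed case, is the correct one.
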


Our main result in this paper is to show the limit K\"ahler Ricci soliton in case (1) is in fact one of  the FIK solitons constructed in  \cite{FIK}, and the limit space is {\em biholomorphic} to $\widetilde {\mathbb C^n}$, $\mathbb C^n$ blown-up at one point. 

Suppose the initial $U(n)$-invariant K\"ahler metric lies in the class $-a_0[D_0] + b_0[D_\infty]$. It is proved (\cite{SWe0}) that the condition in case (1) above that $\liminf_{t\to T^-} (T-t)^{-1} Vol(X,g(t)) = \infty$ (see also \eqref{volnot}) is equivalent to the inequality $$0<a_0(n+1)<b_0(n-1).$$ 
And the K\"ahler Ricci flow \eqref{KRF} will contract the exceptional divisor $D_0$ at the singular time \begin{equation}\label{singular time}T=\frac{a_0}{n-1}.\end{equation} Throughout this paper we will assume $0<a_0(n+1)<b_0(n-1)$.

%%%%%%%subsection 2.3
\subsection{Cheeger-Gromov convergence}\label{CG convergence}
Let $g_j := g_j(0) = \frac{1}{T-t_j} g(t_j)$ and $X_j = X, p_j = p\in D_0=E$ be a fixed point, then from  case (1) in Theorem \ref{Song theorem}, we know the pointed manifolds $(X_j, p_j,g_j)$ converge in $C^\infty$ Cheeger-Gromov (CG) sense to a complete K\"ahler manifolds $(X_\infty, p_\infty,g_\infty)$ and $g_\infty$ is a nontrivial complete shrinking K\"ahler Ricci soliton. Recall the CG convergence means that there exists a sequence of increasing relatively compact exhaustion $\{U_j\}$ of $X_\infty$, and diffeomorphisms (onto its image) $\phi_j: U_j\to X_j$ satisfying $\phi_j(p_\infty) = p_j$ and 
\begin{equation}\label{smooth convergence}
\phi_j^*g_j\xrightarrow{C^\infty_{loc}} g_\infty,\quad \phi_j^*J_j\xrightarrow{C^\infty_{loc}} J_\infty,
\end{equation}
where $J_j, J_\infty$ are the complex structures on $X_j,X_\infty$, respectively, compatible with the K\"ahler metrics $g_j, g_\infty$.

Since the restriction of the metrics $g_j$ to $E$ are $(n-1) g_{FS}$ where $g_{FS}$ is the Fubini-Study metric on $\cp^{n-1}$, we have
\begin{lemma}[see also \cite{Song1}]\label{diameter bound} 
The diameter of $(E,g_j|_E)$ is $D_n=\alpha_n (n-1)^{1/2}$, hence uniformly bounded. Here $\alpha_n =$ the diameter of $(\cp^{n-1},g_{FS})$.
\end{lemma}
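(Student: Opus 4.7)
The plan is to prove the pointwise identity $g_j|_E = (n-1)g_{FS}$ as Riemannian metrics on $E\cong\cp^{n-1}$, which immediately yields diameter $\alpha_n(n-1)^{1/2}$ independent of $j$. This reduces to two ingredients: (a) the restriction formula $\omega(t)|_E = a_t\,\omega_{FS}$ for the unrescaled Calabi-symmetric flow, and (b) the algebraic identity $a_{t_j}/(T-t_j) = n-1$ extracted from \eqref{singular time}.

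For (a), I would compute in blow-up coordinates near $D_0$. Choose the affine chart $z_1\neq 0$ on $\cp^{n-1}$ and set $w_1=z_1$, $w_i=z_i/z_1$ for $i\geq 2$, so that $D_0=\{w_1=0\}$ and $w'=(w_2,\dots,w_n)$ parametrize an open set of $\cp^{n-1}$. Then $\rho = \log\abs{w_1}+\log(1+\abs{w'})$, and the Calabi boundary condition near $\rho=-\infty$ gives
\begin{equation*}
u(\rho) = a_t\rho + U_0(e^\rho) = a_t\log\abs{w_1} + a_t\log(1+\abs{w'}) + U_0\bigl(\abs{w_1}(1+\abs{w'})\bigr).
\end{equation*}
The first summand is pluriharmonic and drops out under $\ddbar$. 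The second summand contributes $\pi^*(a_t\omega_{FS})$, where $\pi:X\to\cp^{n-1}$ is the bundle projection. For the third summand, set $f=\abs{w_1}(1+\abs{w'})$; a short direct check shows that both $\partial f$ and $\bar\partial f$ vanish on $\{w_1=0\}$, while $\partial\bar\partial f|_{\{w_1=0\}}$ is proportional to $dw_1\wedge d\bar w_1$, so pulling back by the inclusion $D_0\hookrightarrow X$ kills it. Hence $\omega(t)|_E = a_t\omega_{FS}$, and therefore $g(t)|_E = a_t\,g_{FS}$ as Riemannian metrics.

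For (b), the singular time identity \eqref{singular time} yields $a_{t_j} = a_0 - (n-1)t_j = (n-1)(T-t_j)$. Substituting into the parabolic rescaling \eqref{scaling metrics},
\begin{equation*}
g_j|_E = \frac{1}{T-t_j}\,g(t_j)|_E = \frac{a_{t_j}}{T-t_j}\,g_{FS} = (n-1)\,g_{FS},
\end{equation*}
which is independent of $j$. The diameter of $(\cp^{n-1},(n-1)g_{FS})$ is exactly $\alpha_n(n-1)^{1/2}$, which is the claim.

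The only mild technical point is verifying that the Calabi radial correction $U_0(e^\rho)$ restricts trivially to the exceptional divisor; this is a purely local computation in the blow-up chart and presents no genuine obstacle. Everything else is algebraic bookkeeping using the K\"ahler class evolution $a_t = a_0 - (n-1)t$ and the explicit singular time $T = a_0/(n-1)$ recorded in the preliminaries.
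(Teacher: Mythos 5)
Your proof is correct and follows the same route as the paper, which simply asserts the identity $g_j|_E=(n-1)g_{FS}$ (citing the Calabi symmetry and the relation $a_{t_j}=(n-1)(T-t_j)$) and reads off the diameter; you have merely supplied the local computation in the blow-up chart verifying that $\omega(t)|_{D_0}=a_t\,\omega_{FS}$, and that computation is accurate. No issues.
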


For notational convenience, we will also denote the exceptional divisor $E\subset X_j = X$ by $E_j$.

%%%%%%% section 3
\section{A priori estimates}\label{section 3}

As we mentioned before, we will assume the initial K\"ahler metric lies in $-a_0[D_0] + b_0[D_\infty]$ with $0<a_0(n+1)<b_0(n-1)$. The evolving metrics belong to the K\"ahler classes
$$\omega(t)\in -a_t [D_0] + b_t [D_\infty], \quad \text{with }a_t = a_0 - (n-1) t, ~ b_t = b_0 - (n+1)t.$$ The evolution equations for the potentials of the evolving metrics $\omega(t) = \ddbar u(\rho, t)$ for $\rho\in (-\infty,\infty)$ and $t\in [0,T)$, where $T$ is given in \eqref{singular time}, are given by (\cite{SWe0,Song1})
\begin{equation}\label{prime equation}
\frac{\partial}{\partial t} u' = \frac{u'''}{u''} + (n-1)\frac{u''}{u'} - n,
\end{equation}
\begin{equation}\label{double prime equation}
\frac{\partial}{\partial t} u'' = \frac{u^{(4)}}{u''} - \frac{(u''')^2}{(u'')^2} + (n-1)\frac{u'''}{u'} - (n-1)\frac{(u'')^2}{(u')^2},
\end{equation}
\begin{equation}\label{triple prime equation}\begin{split}
\frac{\partial}{\partial t} u''' = &\frac{u^{(5)}}{u''} - \frac{3u''' u^{(4)}}{(u'')^2} + \frac{2(u''')^3}{(u'')^3} +(n-1)\frac{u^{(4)}}{u'}\\
&- 3(n-1)\frac{u'' u'''}{(u')^2} + 2(n-1) \frac{(u'')^3}{(u')^3}.
\end{split}\end{equation}
Along the flow \eqref{KRF} or \eqref{scalar KRF}, we have (see (\cite{SWe0, Song1}))
\begin{lemma}\label{elementary bound}
There exists a constant  $C>0$ such that for all $t\in [0,T)$ and $\rho\in (-\infty,\infty)$ such that
\begin{equation}\label{u prime bound}
(n-1)(T-t) =a_t\le u'\le C,
\end{equation}
and 
\begin{equation}\label{ratio bound}
0\le \frac{u''}{u'}\le C, \quad -C\le \frac{u'''}{u''}\le C.
\end{equation}
\end{lemma}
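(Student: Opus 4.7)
The plan is to derive the three bounds by combining the Calabi symmetry asymptotics (which fix the behaviour at $\rho = \pm\infty$ and are preserved by the flow) with the parabolic maximum principle applied to quantities built from $u'$, $u''$, $u'''$. The fact that the Calabi conditions are preserved, together with the evolution of the K\"ahler class, gives at each time $t$ the asymptotics $u'(\rho,t) \to a_t$ as $\rho \to -\infty$ and $u'(\rho,t) \to b_t$ as $\rho \to +\infty$, where $a_t = a_0 - (n-1)t$ and $b_t = b_0 - (n+1)t$.

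For the bound on $u'$: since $u''(\rho,t) > 0$ is preserved by the flow (it is equivalent to $\omega(t)$ being a smooth K\"ahler metric, hence preserved up to the singular time), $u'(\cdot, t)$ is strictly increasing on $(-\infty, \infty)$. Combined with the boundary limits above, this gives $a_t \le u'(\rho,t) \le b_t \le b_0$, which is the claim $(n-1)(T-t) = a_t \le u' \le C$, using $T = a_0/(n-1)$.

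For the ratio $Q := u''/u'$: the Calabi expansions (condition (2) of the definition) imply $u'' = e^\rho U_0''(e^\rho) e^\rho + e^\rho U_0'(e^\rho)$ near $\rho=-\infty$ and similarly near $\rho=+\infty$, so $Q(\rho,t) \to 0$ as $\rho \to \pm\infty$. I would then differentiate \eqref{prime equation} in $\rho$ to get \eqref{double prime equation} and compute $\partial_t Q$ directly; writing $P := u'''/u''$, one obtains schematically
\begin{equation*}
\partial_t Q = \frac{1}{u''}Q_{\rho\rho} + (\text{first-order terms in } Q_\rho) + R(Q, P),
\end{equation*}
where the reaction $R$ is a polynomial in $Q$ and $P$ with signs favourable for an upper bound. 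Since $Q$ vanishes at $\rho = \pm\infty$ and the spatial maximum is attained in the interior, the maximum principle on $Q$ (possibly after introducing a cutoff or an auxiliary function to handle the asymptotics rigorously) yields a time-uniform upper bound $Q \le C$ from the initial bound, while $Q \ge 0$ is automatic from $u'' > 0$, $u'>0$.

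For the bound on $P := u'''/u''$: apply the same strategy to $P$ or to $P^2$, using \eqref{double prime equation} and \eqref{triple prime equation} to derive a parabolic equation. Again the Calabi asymptotics force $P \to 0$ as $\rho \to \pm\infty$, and the previously obtained bound on $Q$ allows one to control the cross terms in the evolution equation for $P$. I expect the main obstacle to be the bookkeeping in the evolution equations for $Q$ and $P$: verifying that the dangerous cubic/quadratic terms have the right sign (or can be absorbed into the diffusion via a clever change of variable such as $s = u'$, in which $u''$ becomes a function $\phi(s,t)$ on $[a_t, b_t]$ with Dirichlet-type boundary data) and justifying the maximum principle on the non-compact $\rho$-line. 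These are the steps carried out in \cite{SWe0, Song1}.
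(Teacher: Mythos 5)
Your outline is, in substance, the argument the paper is pointing to: the paper gives no proof of this lemma and simply cites \cite{SWe0, Song1}, where precisely these maximum-principle bounds on $u'$, $u''/u'$ and $u'''/u''$ are established, and your first bound ($u''>0$ plus the Calabi asymptotics giving $a_t\le u'\le b_t\le b_0$) is already complete. Two corrections/completions to your sketch. First, your claimed asymptotics for $P=u'''/u''$ are wrong: writing $u=a_t\rho+U_0(e^{\rho})$ near $\rho=-\infty$ one finds $u''\sim e^{\rho}U_0'(0)$ and $u'''\sim e^{\rho}U_0'(0)$, so $P\to 1$ there, and similarly $P\to -1$ as $\rho\to+\infty$; this is harmless (the boundary values are still bounded, and the conclusion becomes a bound of the form $|P|\le\max\{1,C\}$), but it is not the decay to $0$ you asserted. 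Second, the ``favourable signs'' you merely hope for are the entire content of the proof, and they do hold: at an interior spatial maximum of $Q=u''/u'$ the conditions $Q'=0$, $Q''\le 0$ give $u'''=Qu''$ and $u^{(4)}\le Q^2u''$, and substituting into \eqref{prime equation}--\eqref{double prime equation} yields $\partial_t Q\le -nQ(Q-1)/u'$, which is nonpositive once $Q>1$; at an interior spatial maximum of $P$ one gets $u^{(4)}=P^2u''$, $u^{(5)}\le P^3u''$, all cubic terms in $P$ coming from \eqref{triple prime equation} cancel, and what remains is $\partial_t P\le 2(n-1)Q(Q-P)/u'$, nonpositive once $P\ge \sup Q$, with the reversed inequality at a minimum (together with $Q\ge 0$) giving the lower bound. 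With those two computations supplied, your proposal is the standard proof from \cite{SWe0}.
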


\begin{lemma}\label{bound of u prime}
There exists a constant $C>0$ such that for all $t\in [0,T)$ and $\rho\in (-\infty,\infty)$ 
\begin{equation}\label{bound of u double prime}
C^{-1} (u'-a_t)(b_t - u')\le u''\le C (u'-a_t)(b_t - u').
\end{equation}
\end{lemma}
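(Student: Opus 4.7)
The plan is to apply the parabolic maximum principle to the single ratio
$$Q(\rho,t):=\frac{u''(\rho,t)}{\bigl(u'(\rho,t)-a_t\bigr)\bigl(b_t-u'(\rho,t)\bigr)},$$
for which \eqref{bound of u double prime} is exactly the two-sided bound $C^{-1}\le Q\le C$. By Lemma \ref{elementary bound} and the Calabi conditions, $u'(\cdot,t)$ is a strictly increasing bijection $\mathbb{R}\to(a_t,b_t)$, so the denominator is positive on $\mathbb{R}$ and degenerates only at $\rho=\pm\infty$.

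The first step is to pin down the boundary values of $Q$ at $\rho=\pm\infty$ from the Calabi expansion. Near $\rho=-\infty$, writing $u=a_t\rho+U_0(e^\rho,t)$ with $U_0'(0,t)>0$ gives $u'-a_t\sim U_0'(0,t)\,e^\rho$ and $u''\sim U_0'(0,t)\,e^\rho$, while $b_t-u'\to b_t-a_t$; the factor $U_0'(0,t)$ cancels and $\lim_{\rho\to-\infty}Q(\rho,t)=1/(b_t-a_t)$. The analogous expansion at $+\infty$ produces the same limit. Under the hypothesis $a_0(n+1)<b_0(n-1)$, one has $b_t-a_t=(b_0-a_0)-2t\in[b_0-a_0-\tfrac{2a_0}{n-1},\,b_0-a_0]$ on $[0,T)$, so these boundary limits of $Q$ are trapped between two strictly positive constants independent of $t$.

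To exploit the maximum principle on a compact spatial domain, it is convenient to substitute $\xi:=u'$, $\Phi(\xi,t):=u''(\rho,t)$. Using $\partial_\rho=\Phi\,\partial_\xi$, a direct calculation turns \eqref{prime equation}--\eqref{double prime equation} into
$$\Phi_t=\Phi\,\Phi_{\xi\xi}-\Phi_\xi^{2}-(n-1)\frac{\Phi^{2}}{\xi^{2}}+n\,\Phi_\xi$$
on the moving interval $\xi\in(a_t,b_t)$. Setting $G(\xi,t):=\Phi/[(\xi-a_t)(b_t-\xi)]$, the boundary analysis of the previous paragraph extends $G$ continuously to $[a_t,b_t]$ with the uniform endpoint value $1/(b_t-a_t)$, and $G|_{t=0}$ is bounded between positive constants by smoothness of $\omega_0$. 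A mechanical computation of $\partial_t G$ at an interior spatial extremum — where $\partial_\xi G=0$, the sign of $\partial_\xi^{2}G$ is known, and the ratios $u''/u'$ and $u'''/u''$ from \eqref{ratio bound} control the remaining zeroth-order terms — produces a differential inequality of the form $\dot G_{\max}(t)\le C_1-C_2\,G_{\max}(t)$ together with a companion inequality trapping $G_{\min}^{-1}$. Combined with the uniform control at the boundary and at $t=0$, this forces $c\le G\le C$ on $[0,T)$, which is \eqref{bound of u double prime}.

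The main obstacle is the degeneracy of the diffusion coefficient $\Phi$ at the moving endpoints $\xi=a_t,b_t$: the PDE above is singular there and the underlying interval shrinks in time. The reason the argument goes through is that the boundary values of $G$ turn out to be universal — they equal $1/(b_t-a_t)$ regardless of the otherwise uncontrolled quantities $U_0'(0,t),\,U_\infty'(0,t)$ appearing in the Calabi expansion — and this quantity is uniformly bounded between positive constants precisely because of the non-collapsing assumption $a_0(n+1)<b_0(n-1)$. The reaction term $-(n-1)\Phi^{2}/\xi^{2}$, which is the only term that threatens the lower bound on $G$, is controlled using the bound $\Phi/\xi=u''/u'\le C$ from \eqref{ratio bound}.
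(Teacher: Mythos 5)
Your strategy is the same as the paper's: apply the maximum principle to the ratio $Q=u''/[(u'-a_t)(b_t-u')]$ (the paper works with $H=\log Q$ in the $\rho$ variable rather than passing to $\xi=u'$, but that is cosmetic), and control the behavior at spatial infinity by the Calabi expansion, which gives the universal limit $1/(b_t-a_t)$ at $\rho=\pm\infty$ exactly as you compute. Your derivation of the equation $\Phi_t=\Phi\Phi_{\xi\xi}-\Phi_\xi^2-(n-1)\Phi^2/\xi^2+n\Phi_\xi$ is also correct.

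The gap is that the decisive step is asserted rather than carried out: the sentence ``a mechanical computation \dots produces a differential inequality of the form $\dot G_{\max}(t)\le C_1-C_2G_{\max}(t)$ together with a companion inequality trapping $G_{\min}^{-1}$'' is the entire content of the lemma, and neither the form of the inequality nor the source of the favorable sign is verified. What the paper's computation actually yields is not an ODE for the extremal value but a direct pointwise bound: at any interior space--time minimum of $H$ one combines $\partial_tH\le 0$, $H'=0$, $H''\ge 0$ to get
\begin{equation*}
-u''\Big(\tfrac{1}{(u'-a_t)^2}+\tfrac{1}{(b_t-u')^2}\Big)-(n-1)\tfrac{u''}{(u')^2}+\tfrac{1}{u'-a_t}+\tfrac{1}{b_t-u'}\le 0,
\end{equation*}
whence $Q\ge \frac{1}{2n(b_t-a_t)}$ at that point. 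Two things in your sketch are off or missing here. First, the positive terms $\frac{1}{u'-a_t}+\frac{1}{b_t-u'}$, which are what produce the lower bound, come from the time-dependence of the endpoints ($\dot a_t=-(n-1)$, $\dot b_t=-(n+1)$) interacting with the $-n$ in the evolution of $u'$; your sketch treats the shrinking interval only as an ``obstacle'' and never identifies this as the source of the good term, so it is not clear your computation would close. Second, the dangerous reaction term $-(n-1)u''/(u')^2$ (after dividing by $u''$) is absorbed in the paper not via $u''/u'\le C$ from \eqref{ratio bound}, as you propose, but via the elementary inequality $(u'-a_t)(b_t-u')\le u'\sqrt{(u'-a_t)^2+(b_t-u')^2}$, which lets it be dominated by $(n-1)Q$ itself; using $u''/u'\le C$ instead would leave an additive error of the wrong character for a lower bound on $Q$. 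You should either carry out the extremum computation explicitly or note, as the paper does, that the upper bound in \eqref{bound of u double prime} is already Lemma 4.5 of \cite{SWe0} and only the lower bound requires the new argument.
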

\begin{proof}
The proof of the second inequality is given in Lemma 4.5 of \cite{SWe0}, and the first inequality can be proved following the same argument as in \cite{SWe0}. For readers' convenience we include the proof below.

Consider the quantity $H = \log u'' - \log (u'-a_t) - \log (b_t - u')$, using the evolution equations \eqref{prime equation} and \eqref{double prime equation} we have
\begin{equation}\label{H evolution}\begin{split}
\frac{\partial H}{\partial t} = & \frac{1}{u''}\bk{ \frac{u^{(4)}}{u''} - \frac{(u''')^2}{(u'')^2} + (n-1)\frac{u'''}{u'} - (n-1) \frac{(u'')^2}{(u')^2} }\\
& - \frac{1}{u'-a_t} \bk{ \frac{u'''}{u''} + (n-1)\frac{u''}{u'} - 1 } - \frac{1}{b_t - u'}\bk{ -\frac{u'''}{u''} - (n-1)\frac{u''}{u'} - 1 }.
\end{split}\end{equation}
It can be checked by Calabi symmetry condition that for each fixed $t\in [0,T)$
\begin{equation*}
\lim_{\rho \to \pm \infty}\frac{u''(\rho,t)}{(b_t - u'(\rho,t)) (u'(\rho,t) - a_t)} = \frac{1}{b_t - a_t},
\end{equation*}
which is uniformly bounded above and below in our case.

For any $T'\in (0, T)$, suppose the minimum of $H$ on $X\times [0,T']$ is obtained at some $(\rho_0,t_0)$, then at this point we have $\frac{\partial}{\partial t} H\le 0$, $H' = 0$, and $H''\ge 0$, i.e.
\begin{equation}\label{equation H 1}
\frac{u'''}{u''} - \frac{u''}{u'-a_t} + \frac{u''}{b_t - u'} = 0,
\end{equation}
\begin{equation}\label{equation H 2}
\frac{u^{(4)}}{u''} - \frac{(u''')^2}{(u'')^2} - \frac{u'''}{u'-a_t} + \frac{(u'')^2}{(u'-a_t)^2} + \frac{u'''}{b_t - u'} + \frac{(u'')^2}{(b_t - u')^2}\ge 0,
\end{equation}
combining with \eqref{H evolution}, \eqref{equation H 1} and \eqref{equation H 2} we have at  $(\rho_0,t_0)$,
\begin{equation}
- u''\bk{\frac{1}{(u'-a_t)^2} + \frac{1}{(b_t - u')^2}} - (n-1)\frac{u''}{(u')^2} + \frac{1}{u'-a_t} + \frac{1}{b_t - u'}\le 0.
\end{equation}
Hence
\begin{equation*}
\frac{u''}{(b_t - u')(u'-a_t)} + \frac{(n-1) u'' (b_t - u')(u'-a_t)}{(u')^2 \big((u'-a_t)^2 + (b_t - u')^2 \big)} \ge \frac{b_t -a_t }{(b_t - u')^2 + (u'-a_t)^2}.
\end{equation*}
We observe that 
\begin{equation*}
\frac{(b_t - u')(u'-a_t)}{(u')^2 ((u'-a_t)^2+ (b_t - u')^2)}\le \frac{1}{(b_t - u')(u'-a_t)},
\end{equation*}
and $$(u'-a_t)^2 + (b_t - u')^2\le 2 (b_t -a_t)^2,$$ hence at $(\rho_0,t_0)$,
\begin{equation*}
\frac{u''}{(u'-a_t) (b_t - u')}\ge \frac{1}{2n (b_t - a_t)}\ge C^{-1},
\end{equation*}
as $b_t - a_t$ is uniformly bounded above. The maximum principle implies the minimum of $H$ on $X\times [0,T']$ is uniformly bounded below independent of the choice of $T'$, hence we conclude that $\inf_{X\times [0,T)} H\ge - C$. And we finish the proof the first inequality in \eqref{bound of u double prime}.

\end{proof}

%%%%%%%%%%%%subsection 3.2
\subsection{Estimates for the sequence of metrics $g_j$}
Recall that the Cheeger-Gromov limit $(X_\infty, g_\infty, p_\infty)$ of $(X_j,g_j,p)$  is a complete K\"ahler Ricci soliton by the work of \cite{Nab, EMT}. Hence by a theorem of Cao-Zhou (Theorem 1.2 in \cite{CZ}), there exists a constant $C_0>0$ such that the volume of geodesic balls $B_{g_\infty}(p_\infty,R)$ satisfies
\begin{equation}\label{volume growth}
Vol_{g_\infty}(B_{g_\infty}(p_\infty,R))\le C_0 R^{2n}.
\end{equation}
And Perelman's non-collapsing (\cite{Pe}) implies there exists a $\kappa>0$ for any $q\in X_\infty$, the volume $Vol_{g_\infty}(B_{g_\infty}(q, r_0))\ge \kappa r_0^{2n}$, for any $r_0^2\le \frac{1}{C}$, where $C$ is the constant in Type I condition. In particular, we have $Vol_{g_\infty}(B_{g_\infty}(p_\infty,R))\to \infty$ as $R\to \infty$.% {\bf why???}

\begin{lemma}\label{estimate lemma}
For any $R>0$, there exist constants $c(n,R)>0$ and $C(n,R) = O(R^2)$ such that for $j\ge 1$ large enough, then in the metric balls $B_{g_j}(E_j, R)$, we have 
\begin{equation}\label{good bound 1}
(n-1)(T-t_j)= a_{t_j}\le u'(\rho, t_j) \le C(n,R)(T-t_j),\quad u''(\rho, t_j)\le C(n,R)(T-t_j).
\end{equation}
Moreover, on $\partial B_{g_j}(E_j,R)$, for $j$ large enough, we have
\begin{equation}\label{good bound}\begin{split}
c(n,R)(T-t_j) + (n-1)(T-t_j)&\le u'(\rho_{j,R},t_j)\le C(n,R)(T-t_j),\\
c(n,R)(T-t_j)&\le  u''(\rho_{j,R},t_j)\le C(n,R)(T-t_j)
\end{split}\end{equation}
and $c(n,R)\to +\infty$ as $R\to \infty$, $\rho_{j,R}$ is defined in \eqref{rho R}, corresponding to points on $\partial B_{g_j}(E_j, R)$. 
\end{lemma}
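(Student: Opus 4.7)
The plan is to reduce everything to a one-dimensional integral estimate for $u'(\rho, t_j)$, by combining the parabolic rescaling with the sharp two-sided bound $u'' \asymp (u' - a_t)(b_t - u')$ from Lemma~\ref{bound of u prime}. Since $g_j = (T-t_j)^{-1} g(t_j)$, a point $z \in \partial B_{g_j}(E_j, R)$ sits at $g(t_j)$-distance $R\sqrt{T-t_j}$ from $E$, so with $\rho_{j,R} := \log |z|$ the radial distance formula of Section~\ref{section 2} reads
$$
R\sqrt{T-t_j} \;=\; \tfrac{1}{2} \int_{-\infty}^{\rho_{j,R}} \sqrt{u''(\rho, t_j)}\, d\rho.
$$
Because $u''(\cdot, t_j) > 0$, the map $\rho \mapsto u'(\rho, t_j)$ is a strictly increasing diffeomorphism onto $(a_{t_j}, b_{t_j})$, so substituting $s = u'(\rho, t_j)$ (with $d\rho = ds / u''$) turns this identity into
$$
R\sqrt{T-t_j} \;=\; \tfrac{1}{2} \int_{a_{t_j}}^{s_j(R)} \frac{ds}{\sqrt{u''(\rho(s), t_j)}}, \qquad s_j(R) := u'(\rho_{j,R}, t_j).
$$

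The hypothesis $a_0(n+1) < b_0(n-1)$ yields $b_T - a_T > 0$, so $b_{t_j} - a_{t_j}$ is bounded above and below by positive constants for $j$ large. Assuming \emph{provisionally} that $s_j(R) \le a_{t_j} + \tfrac{1}{4}(b_{t_j} - a_{t_j})$, Lemma~\ref{bound of u prime} gives $u''(\rho(s), t_j) \asymp (s - a_{t_j})(b_{t_j} - s) \asymp s - a_{t_j}$ throughout the range of integration. Inserting this above,
$$
R\sqrt{T-t_j} \;\asymp\; \int_{a_{t_j}}^{s_j(R)} \frac{ds}{\sqrt{s - a_{t_j}}} \;=\; 2\sqrt{s_j(R) - a_{t_j}},
$$
which produces $s_j(R) - a_{t_j} \asymp R^2(T-t_j)$ with constants depending only on $n$ and the initial data; together with Lemma~\ref{bound of u prime} this is exactly the boundary estimate~\eqref{good bound}, with $c(n,R) \asymp R^2 \to \infty$ as $R\to\infty$. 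The interior estimate~\eqref{good bound 1} then follows by monotonicity of $u'(\cdot, t_j)$ in $\rho$: for any $\rho \in B_{g_j}(E_j, R)$ one has $a_{t_j} \le u'(\rho, t_j) \le s_j(R) \le C(n,R)(T-t_j)$, and Lemma~\ref{bound of u prime} again bounds $u''(\rho, t_j) \le C(u' - a_{t_j})(b_{t_j} - u') \le C(n,R)(T-t_j)$.

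The main technical step is justifying the provisional assumption, and I would do this by a continuity bootstrap in the radius. Define $R^\ast_j := \sup \bigl\{ r > 0 : s_j(r) - a_{t_j} \le \tfrac{1}{4}(b_{t_j} - a_{t_j}) \bigr\}$. On the interval $[0, R^\ast_j)$ the preceding derivation applies and gives $s_j(r) - a_{t_j} \le C r^2 (T-t_j)$; if $R^\ast_j < \infty$, continuity of $r \mapsto s_j(r)$ forces $s_j(R^\ast_j) - a_{t_j} = \tfrac{1}{4}(b_{t_j} - a_{t_j})$, which in turn gives $(R^\ast_j)^2 (T - t_j) \ge c > 0$. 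Hence $R^\ast_j \to \infty$ as $j \to \infty$, and for any fixed $R$ we have $R < R^\ast_j$ for all $j$ large, validating the argument. Apart from this bootstrap, every step is a routine one-variable integral estimate anchored on Lemmas~\ref{elementary bound} and~\ref{bound of u prime}.
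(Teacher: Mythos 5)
Your proof is correct, but it follows a genuinely different route from the paper's. The paper argues through volumes: the smooth Cheeger--Gromov convergence transfers $Vol_{g_\infty}(B_{g_\infty}(p_\infty,R))$ to $Vol_{g_j}(B_{g_j}(E_j,R))$ (sandwiching the tubular neighborhood between $B_{g_j}(p_j,R)$ and $B_{g_j}(p_j,R+D_n)$), the upper bound $O(R^{2n})$ comes from Cao--Zhou's volume growth estimate \eqref{volume growth} for the limiting shrinking soliton and the divergence of the lower bound from Perelman's non-collapsing, and then $u'(\rho_{j,R},t_j)$ is read off from the closed-form volume identity $Vol_{g_j}(B_{g_j}(E_j,R))=\frac{C(n)}{n(T-t_j)^n}\bigl((u'(\rho_{j,R},t_j))^n-a_{t_j}^n\bigr)$. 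You instead work with the radial distance formula \eqref{rho R}, substitute $s=u'$ to get $R\sqrt{T-t_j}=\frac12\int_{a_{t_j}}^{s_j(R)}ds/\sqrt{u''}$, and evaluate the integral via the two-sided bound of Lemma \ref{bound of u prime}, with a continuity bootstrap to keep $u'$ away from $b_{t_j}$. Your version is more elementary and self-contained -- it never uses the soliton structure of the limit, Cao--Zhou, or non-collapsing -- and it yields the sharper quantitative statement $c(n,R)\asymp R^2$, whereas the paper only obtains $c(n,R)\to\infty$; the paper's version is shorter given the machinery it has already quoted. Two small remarks on your bootstrap: the upper half of your estimate is actually unconditional, since $u''\le C(u'-a_{t_j})(b_{t_j}-u')\le Cb_0\,(u'-a_{t_j})$ holds everywhere, so only the lower bound needs the provisional hypothesis $s_j(R)\le a_{t_j}+\frac14(b_{t_j}-a_{t_j})$, and the unconditional upper bound $s_j(R)-a_{t_j}\le CR^2(T-t_j)\to 0$ already forces that hypothesis for $j$ large without the $R^\ast_j$ dichotomy; and one should note in passing that $\rho_{j,r}$ is defined up to $r\lesssim (T-t_j)^{-1/2}$ (integrate $\sqrt{u''}\gtrsim\sqrt{(u'-a_{t_j})(b_{t_j}-u')}$ over all $\rho$), so $\partial B_{g_j}(E_j,R)$ is nonempty for $j$ large.
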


\begin{proof}
For any fixed $R>0$, by the $C^\infty$-CG convergence \eqref{smooth convergence} we have 
\begin{equation*}
Vol_{g_j}(B_{g_j}(p_j, R))\to Vol_{g_\infty}(B_{g_\infty}(p_\infty, R)),\quad\text{as }j\to \infty,
\end{equation*}
in particular, we have both $Vol_{g_j}(B_{g_j}(p_j, R))$ and $Vol_{g_j}(B_{g_j}(p_j, R+D_n))$ are uniformly bounded above and below, for $j$ large enough, where $D_n$ is the diameter of $E_j$ given by Lemma \ref{diameter bound}. Noting that $$B_{g_j}(p_j, R)\subset B_{g_j}(E_j, R)\subset B_{g_j}(p_j, R+D_n), $$
hence there are two constants $c_1 = c_1(n,R)$ and $C_1= C_1(n,R)$
\begin{equation}\label{volume bound}
c_1(n,R)\le Vol_{g_j} (B_{g_j}(E_j, R))\le C_1(n,R),
\end{equation}
By \eqref{volume growth}, it is easy to see that when $j$ is large enough, we can choose $C_1(n,R) = O(R^{2n})$. Moreover,
by the volume formula \eqref{volume formula}
\begin{equation}\label{volume expression}\begin{split}Vol_{g_j} (B_{g_j}(E_j, R)) &= \frac{C(n)}{(T-t_j)^n}\int_{-\infty}^{\rho_{j,R}} (u'(\rho,t_j),t_j)^{n-1}u''(\rho,t_j)d\rho\\
&= \frac{C(n)}{n(T-t_j)^{n}} \bk{ (u'(\rho_{j,R},t_j))^n - a_{t_j}^n},
\end{split}\end{equation}
where $a_{t_j} = (n-1)(T-t_j)$, and $\rho_{j,R}$ is a constant determined by the equation \eqref{rho R} with $u''$ replaced by $\frac{u''(\rho,t_j)}{T-t_j}$.
%$$R= \frac{1}{2\sqrt{T-t_j}}\int_{-\infty}^{\rho_{j,R}} \sqrt{u''(\rho,t_j)}d\rho,$$ 
%i.e. a point $z\in \mathbb C^n\backslash\{0\}$ with $\log \abs{z} = \rho_{j,R}$ lies in $\partial B_{g_j}(E_j, R)$.
Combining \eqref{volume bound} and \eqref{volume expression}, there are constants $c_2(n,R)$ and $C_2(n,R)$ such that
\begin{equation}\label{u prime bound on balls}
c_2(n,R)+(n-1)\le \frac{u'(\rho_{j,R},t_j)}{T-t_j}\le C_2(n,R).
\end{equation}
Combining with the fact that $u'(\rho,t_j)$ is increasing in $\rho$ and Lemmas \ref{elementary bound} and \ref{bound of u prime}, if $j$ is large enough, \eqref{good bound 1} and \eqref{good bound} hold.\end{proof}

%%%%%%%%%%%%subsection 3.2
\subsection{$X_j$ as a $\cp^1$-bundle over $\cp^{n-1}$} \label{subsection 3.2}Recall the manifold $X_j$ can be viewed as a $\cp^1$-bundle over $\cp^{n-1}$ (see \eqref{fiber structure}). Let $$F_j: X_j\to \cp^{n-1}$$ be the holomorphic bundle map. 
\begin{lemma}\label{C1 estimate}
The holomorphic maps $F_j: (X_j,\omega_j)\to (\cp^{n-1}, \omega_{FS})$ have uniformly bounded derivatives, i.e., there exists $C>0$ such that for all $j$, 
\begin{equation}
\sup_{X_j} |\nabla F_j|_{\omega_j, \omega_{FS}} \leq C
\end{equation}
Furthermore, the derivatives $dF_j: TX_j \to T\cp^{n-1}$ have full rank $n-1$ everywhere. %Here we refer to the ``operator norm'' of the derivatives. ({\bf what does it mean?})
\end{lemma}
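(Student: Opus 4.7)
The plan is to reduce to a direct calculation in Calabi-symmetric coordinates on $\mathbb C^n\setminus\{0\}\subset X_j$ by exploiting a fortunate geometric coincidence: on this chart, $g_j$ (from \eqref{expansion of omega}) and $F_j^*\omega_{FS}$ are simultaneously diagonalized in the same radial/angular decomposition of $T\mathbb C^n$. Since $F_j$ is the projection $z\mapsto[z]$ on $\mathbb C^n\setminus\{0\}$ and the Fubini-Study potential pulls back to $\log|z|^2=\rho$, I would first compute
$$F_j^*\omega_{FS}=i\partial\bar\partial\rho,\qquad (F_j^*\omega_{FS})_{i\bar j}=e^{-\rho}\delta_{ij}-e^{-2\rho}\bar z_iz_j.$$
A direct check then shows: on the radial holomorphic vector $V=\sum_iz_i\,\partial/\partial z_i$ (which is tangent to the fibers of $F_j$, as these are complex lines through the origin) one has $F_j^*\omega_{FS}(V,V)=0$, while on vectors $v$ with $\sum_i\bar z_iv^i=0$ one has $F_j^*\omega_{FS}(v,v)=e^{-\rho}|v|^2$. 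The metric $g_j$ diagonalizes in the same splitting, with eigenvalue $u''/((T-t_j)e^\rho)$ on $V$ and $u'/((T-t_j)e^\rho)$ on the angular complement.

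Taking $|\nabla F_j|^2_{g_j,\omega_{FS}}=\mathrm{tr}_{g_j}(F_j^*\omega_{FS})$ then collapses to
$$|\nabla F_j|^2_{g_j,\omega_{FS}}=\frac{(n-1)(T-t_j)}{u'(\rho,t_j)},$$
and the lower bound $u'\ge a_{t_j}=(n-1)(T-t_j)$ from Lemma \ref{elementary bound} immediately yields $|\nabla F_j|_{g_j,\omega_{FS}}\le 1$ on $\mathbb C^n\setminus\{0\}$. The bound extends to the divisors $D_0$ and $D_\infty$ by continuity, using the Calabi boundary conditions which guarantee that $u'$ is smooth across both divisors, with $u'\to a_{t_j}$ at $D_0$ (giving equality) and $u'\to b_{t_j}$ at $D_\infty$ (giving a strictly smaller value). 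The full-rank statement is then automatic: $F_j$ is a holomorphic $\cp^1$-bundle projection, hence a submersion of rank $n-1$; the same computation quantifies this by exhibiting the $n-1$ nonzero singular values of $dF_j$ as $\sqrt{(T-t_j)/u'}$, each strictly positive since $u'$ is finite.

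No essential difficulty is anticipated; the lemma is genuinely a one-line calculation once one spots the shared eigendecomposition of $g_j$ and $F_j^*\omega_{FS}$. The only item requiring any care is the extension of the pointwise formula across the divisors $D_0$ and $D_\infty$, where the coordinate $\rho$ degenerates, but this is a standard consequence of Calabi symmetry and does not involve the dynamics of the flow.
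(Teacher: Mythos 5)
Your proposal is correct and follows essentially the same route as the paper: both compute $|\nabla F_j|^2=\mathrm{tr}_{g_j}F_j^*\omega_{FS}=(n-1)(T-t_j)/u'$ (the paper via the Laplacian formula $\Delta_{\omega_j}\rho$, you via the explicit shared eigendecomposition of $g_j$ and $\ddbar\rho$ into radial and angular parts) and then invoke $u'\ge a_{t_j}=(n-1)(T-t_j)$ to get the bound $1$. For the full-rank claim your bundle-projection observation suffices for the lemma as stated, though note the paper's version extracts the quantitatively uniform lower bound $e(F_j)\ge (n-1)/C(n,R)$ on $B_{g_j}(E_j,R)$ from Lemma \ref{estimate lemma}, which is what is actually used later to show $dF_\infty$ has full rank in the limit.
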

\begin{proof}

Note that $F_j^*\omega_{FS} = \ddbar \rho = \ddbar \log \abs{z}$. As a map $F_j:(X_j,\omega_j)\to (\cp^{n-1}, \omega_{FS})$, its energy density $e(F_j): = tr_{\omega_j} f_j^* \omega_{FS} = \Delta_{\omega_j}\rho$ is equal to
\begin{equation*}
\frac{(T-t_j) (n-1)}{u'(\rho,t_j)}\le 1,
\end{equation*}
hence the differential of maps $F_j$, $dF_j : TX_j\to T\cp^{n-1}$ is uniformly bounded.

Moreover, by \eqref{good bound 1}, in the balls $B_{g_j}(E_j,R)$,
\begin{equation}\label{lower energy bound}
e(F_j)\ge \frac{(n-1)}{C(n,R)}.
\end{equation}
By the symmetry of $F_j$ and $\omega_j$, it is not hard to see the $(n-1)$-many nonzero eigenvalues of $\omega_j^{-1}\cdot F_j^* \omega_{FS}$ are bounded below by $\frac{1}{C(n,R)}$ in $B_{g_j}(E_j,R)$. And this implies that the rank of the differential map $dF_j:TX_j\to T\cp^{n-1}$ is $n-1$.
\end{proof}

\begin{lemma}\label{C2 estimate}
There exists a constant $C=C(n)>0$, such that for any $j\ge 1$,
\begin{equation*}
\abs{\nabla\nabla F_j}_{g_j, g_{FS}}\le C.
\end{equation*}
Hence we have uniform $C^2$ bound of the maps $F_j$.
\end{lemma}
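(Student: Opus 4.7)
The plan is to apply a Bochner--Weitzenb\"ock identity for the holomorphic map $F_j:(X_j,g_j)\to(\cp^{n-1},g_{FS})$. Since both source and target are K\"ahler and $F_j$ is holomorphic, direct computation shows the mixed-type component $\nabla_{\bar i}\partial_j F_j^\alpha$ vanishes, so it suffices to bound the $(2,0)$ piece $|\nabla\partial F_j|_{g_j,g_{FS}}^2$; the full Hessian norm $|\nabla dF_j|_{g_j,g_{FS}}$ differs by a universal constant. The classical identity for holomorphic maps between K\"ahler manifolds reads (schematically)
\begin{equation*}
\Delta_{g_j}\, e(F_j)\;=\;|\nabla\partial F_j|^2_{g_j,g_{FS}}\;+\;\mathrm{Ric}(g_j)(\partial F_j,\overline{\partial F_j})\;-\;R^{FS}(\partial F_j,\overline{\partial F_j},\partial F_j,\overline{\partial F_j}),
\end{equation*}
where $e(F_j):=|\partial F_j|^2_{g_j,g_{FS}}$. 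I solve for $|\nabla\partial F_j|^2$ and bound the remaining three terms separately.

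The two curvature terms are routine. By the Type I hypothesis, $|\mathrm{Rm}(g_j)|_{g_j}\le C$ and hence $|\mathrm{Ric}(g_j)|_{g_j}\le C$, while Lemma \ref{C1 estimate} yields $e(F_j)\le 1$, so the Ricci contraction is dominated by $C\cdot e(F_j)\le C$. The Fubini--Study metric on $\cp^{n-1}$ has uniformly bounded holomorphic bisectional curvature, and $e(F_j)^2\le 1$, so the target-curvature term is similarly bounded by $C$.

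The substantive step is the estimate on $|\Delta_{g_j}e(F_j)|$. The calculation inside the proof of Lemma \ref{C1 estimate} gives the explicit $U(n)$-invariant closed form
\begin{equation*}
e(F_j)\;=\;\frac{(n-1)(T-t_j)}{u'(\rho,t_j)}.
\end{equation*}
Applying Lemma \ref{gradient and Laplacian} together with the rescaling $\Delta_{g_j}=(T-t_j)\Delta_{\omega(t_j)}$, $\Delta_{g_j}e(F_j)$ becomes a rational combination of $u',u'',u'''$ with an overall factor of $(T-t_j)^2$. Using the ratio bounds $u''/u'\le C$ and $|u'''|/u''\le C$ from Lemma \ref{elementary bound}, together with the universal lower bound $u'\ge(n-1)(T-t_j)$, which forces $(T-t_j)^2/(u')^2\le(n-1)^{-2}$, every summand is bounded independently of $\rho$ and $j$. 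Combining this with the two curvature estimates of the previous paragraph gives $|\nabla\partial F_j|^2_{g_j,g_{FS}}\le C$, and hence $|\nabla dF_j|_{g_j,g_{FS}}\le C$ on all of $X_j$, uniformly in $j$.

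The main obstacle is careful bookkeeping: first, verifying the Bochner identity in the precise form above (various conventions in the literature differ by signs and placement of the Fubini--Study curvature term); and second, checking that the rational expression for $\Delta_{g_j}e(F_j)$ really reduces to a constant depending only on $n$, so that all potentially singular factors of $1/u'$ or $1/u''$ are absorbed by the powers of $(T-t_j)$ generated by the rescaling and by the bounds of Lemmas \ref{elementary bound} and \ref{bound of u prime}.
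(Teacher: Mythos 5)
Your proposal is correct and follows essentially the same route as the paper: both apply the Bochner identity for the holomorphic (hence harmonic) map $F_j$, bound the Ricci term by the Type I condition and the target-curvature term by the boundedness of the Fubini--Study curvature together with $e(F_j)\le 1$, and control $\Delta_{g_j}e(F_j)$ by explicitly computing it from $e(F_j)=(n-1)(T-t_j)/u'$ and invoking the ratio bounds of Lemma \ref{elementary bound}. The only cosmetic difference is that the paper discards the manifestly nonpositive first term of $\Delta_{g_j}e(F_j)$ and only records the upper bound, which is all that the argument requires.
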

\begin{proof}
Since $F_j$ is holomorphic and $\omega_j$ and $\omega_{FS}$ are K\"ahler metrics, $F_j:X_j\to \cp^{n-1}$ is also a harmonic map. By the Bochner formula
\begin{equation}\label{Bochner formula}
\Delta e(F_j) = \abs{\nabla\nabla F_j} + \ric_{\omega_j}(\nabla F_j, \overline{\nabla F_j}) - (R_{\omega_{FS}})^\alpha_{~\beta \bar\gamma\delta} \overline{(F_j)^\alpha_i} (F_j)^\delta_i (F_j)^\beta_{k}\overline{(F_j)^\gamma_k}.
\end{equation}
On the other hand, by direct calculations we have
\begin{equation}\label{calculation 1}\begin{split}
\Delta_{\omega_j} e(F_j) &= -\frac{(n-1)^2 (T-t_j)^2 u''}{(u')^3} - \frac{(n-1)(T-t_j)^2u'''}{u'' (u')^2}\\ &\le -\frac{(n-1)(T-t_j)^2u'''}{u'' (u')^2}\le C(n), 
\end{split}\end{equation}
where in the last inequality we use Lemma \ref{elementary bound}. Combining  \eqref{Bochner formula}, \eqref{calculation 1} and the Type I condition $|\ric_{\omega_j}|\le C$, we have
\begin{equation*}
|\nabla\nabla F_j|^2\le C(n) + C e(F_j) + C e(F_j)^2\le C(n).
\end{equation*}
 Therefore, the maps $F_j: X_j\to \cp^{n-1}$ have uniform second order estimates. 
\end{proof}
\begin{remark}
The holomorphicity and so the harmonicity of the maps $F_j$ implies $F_j$ satisfy uniform $C^k$ estimates locally for any $k\in \mathbb Z$. But the second order estimate is enough for our applications.
\end{remark}

The target manifold of $F_j$ is the compact $(\cp^{n-1}, \omega_{FS})$, and by Lemmas \ref{C1 estimate} and \ref{C2 estimate}, the maps $F_j: X_j\to \cp^{n-1}$ have uniformly bounded $C^1$, $C^2$ bounds, hence $F_j$ converge in $C^{1,\alpha}$ topology to a limit map $F_\infty: X_\infty \to \cp^{n-1}$, where by definition $X_j\to X_\infty$ in the $C^\infty$-CG sense with the Riemannian metrics and complex structures converging smoothly. Since $F_j$ are holomorphic with the given complex structures, the limit map $F_\infty: X_\infty \to \cp^{n-1}$ is also holomorphic with respect to the limit complex structure $J_\infty$ on $X_\infty$. And the $C^{1,\alpha}$ convergence, \eqref{lower energy bound} and Lemma \ref{C1 estimate}  imply that the differential map $dF_\infty: TX_\infty \to T\cp^{n-1}$ has full rank $n-1$ at any point in $X_\infty$, hence implicit function theorem implies that the fibers of $F_\infty$ are smooth complete Riemann surfaces.  

We remark that the convergence of $F_j\to F_\infty$ is in the Cheeger-Gromov sense, that is, the maps $\phi_j^* F_j$ converge to $F_\infty$ in uniform $C_{loc}^{1,\alpha}$ topology on any compact subset of $X_\infty$, where $\phi_j: U_j\to X_j$ is the diffeomorphism we chose in Section \ref{CG convergence} realizing the  $C^\infty$-Cheeger-Gromov convergence. 

%%%%%%%%%%%%%%%subsection 3.3

\subsection{Holomorphic vector fields}
Let $V = \sum_i z_i\frac{\partial}{\partial z_i}$ be a holomorphic vector field on $\mathbb C^n\backslash \{0\}$, which extends to a  holomorphic vector field on $X$, and vanishes on the exceptional divisor $E$. Clearly $V$ is tangential to the fibers of $F_j:X_j\to \cp^{n-1}$.

\begin{lemma}
With respect to a K\"ahler metric $\omega = \ddbar u$ with Calabi symmetry, the imaginary part Im$(V)$ of $V$ is a Killing vector field. Moreover, Im$(V)$ is also Killing with respect to the restriction of the metric on each fiber of $F_j: X_j\to \cp^{n-1}$.
\end{lemma}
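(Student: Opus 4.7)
The plan is to identify the real flow generated by $\mathrm{Im}(V)$ with a one-parameter subgroup of the $U(n)$-action on $X$, and then invoke the Calabi/$U(n)$-symmetry of $\omega$.

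First I would observe that on $\mathbb{C}^n\setminus\{0\}$ one has $\mathrm{Im}(V) = \tfrac{1}{2i}\sum_i(z_i\partial_{z_i}-\bar z_i\partial_{\bar z_i})$, whose real flow is $z\mapsto e^{i\theta}z$. This diagonal rotation is a one-parameter subgroup of the standard $U(n)$-action on $\mathbb{C}^n$, hence it extends to a holomorphic isometric action on $X$ once we know $\omega$ is $U(n)$-invariant. Since $\omega = \ddbar u$ with $u=u(\rho)$ and $\rho=\log(|z_1|^2+\cdots+|z_n|^2)$, the potential $u$ is $U(n)$-invariant, so $\omega$ is $U(n)$-invariant and therefore preserved by the flow of $\mathrm{Im}(V)$. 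Since $V$ is holomorphic, $\mathrm{Im}(V)$ also commutes with $J$, hence preserves $g(\cdot,\cdot)=\omega(\cdot,J\cdot)$. This shows that $\mathrm{Im}(V)$ is a Killing vector field on $(X,g)$.

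If one wants to avoid invoking the $U(n)$-symmetry as a black box, the same conclusion can be verified by direct computation from \eqref{expansion of omega}: the components $g_{i\bar j} = e^{-\rho}u'\delta_{ij} + e^{-2\rho}\bar z_i z_j(u''-u')$ depend on $(z,\bar z)$ only through the combinations $|z|^2$ and $\bar z_i z_j$, both of which are invariant under $z\mapsto e^{i\theta}z$. A short Lie-derivative computation then gives $\mathcal L_{\mathrm{Im}(V)} g_{i\bar j}=0$.

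For the second assertion, $V=\sum z_i\partial_{z_i}$ is tangent at every point to the complex line through the origin in $\mathbb{C}^n$; after blowing up and compactifying, these lines together with $D_0$ and $D_\infty$ are precisely the $\mathbb{CP}^1$-fibers of $F_j:X_j\to \mathbb{CP}^{n-1}$, as noted in Section \ref{subsection 3.2}. Thus $\mathrm{Im}(V)$ is everywhere tangent to the fibers of $F_j$. A Killing vector field on $(X_j,g_j)$ that is tangent to a complex submanifold automatically restricts to a Killing field on that submanifold for the induced metric, which yields the claim. There is no real obstacle here; the only thing to be careful about is that the flow $z\mapsto e^{i\theta}z$ extends smoothly across the exceptional divisor $E$ (where $V$ vanishes) and across $D_\infty$, which is immediate from the fact that it is induced by an element of $U(n)\subset \mathrm{Aut}(X)$.
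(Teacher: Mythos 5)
Your proof is correct, but the first half takes a different route from the paper. The paper argues infinitesimally via Cartan's formula: since $\omega=\ddbar u$ is closed and $V$ is holomorphic of type $(1,0)$ with $Vu=u'$, one gets $L_V\omega=d(\iota_V\ddbar u)=-\ddbar u'$, and by conjugation $L_{\bar V}\omega=-\ddbar u'$ as well, so $L_{V-\bar V}\omega=0$; the only role of the Calabi symmetry is to make $Vu=u'$ a \emph{real} function. You instead integrate $\mathrm{Im}(V)$ to the central circle $z\mapsto e^{i\theta}z$ of the $U(n)$-action and quote $U(n)$-invariance of $\omega$ (and of $J$). Both are valid; your version is more conceptual and essentially free once one accepts that the rotation extends across $E$ and $D_\infty$ as an element of $\mathrm{Aut}(X)$ (which you rightly flag), while the paper's computation has the side benefit of producing the identity $L_{\mathrm{Im}(V)}\omega=0$ directly as a first-order differential relation in $V$ --- exactly the form that is later passed to the Cheeger--Gromov limit under only $C^{1,\alpha}$ convergence of $V_j$ (see the remark following the lemma and the proof of Proposition \ref{limit holomorphic vector field}), where no integrated group action on $X_\infty$ is yet available. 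Your fallback paragraph (invariance of $g_{i\bar j}$ under $z\mapsto e^{i\theta}z$ read off from \eqref{expansion of omega}) covers the same ground. The second half of your argument --- tangency of $\mathrm{Im}(V)$ to the fibers plus the fact that a Killing field tangent to a submanifold restricts to a Killing field of the induced metric --- is exactly the paper's argument, phrased via the flow rather than via pulling back $L_{\mathrm{Im}(V)}\omega=0$ along the inclusion of the fiber. One cosmetic quibble: each fiber of $F_j$ is a single line through the origin compactified by one point of $D_0$ and one point of $D_\infty$, not the lines ``together with $D_0$ and $D_\infty$'' as divisors; this does not affect the argument.
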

\begin{proof}
This follows from straightforward calculations.
Observe that 
$Vu = u'$ and
$$L_V \omega = d (\iota_V \ddbar u ) = d\big(-i \bar\partial (u') \big) = -\ddbar u' ,$$ taking conjugate on both sides we have $L_{\bar V} \omega = -\ddbar u'$, hence it holds that $L_{V-\bar V}\omega = 0$ and this implies the imaginary part of $V$, $\mathrm{Im}(V)$, is a Killing vector field with respect to the metric $\omega$, i.e.
\begin{equation}\label{Killing field}
L_{\mathrm{Im}(V)}\omega = 0.
\end{equation}
On the other hand, for any fiber $F_p$ of $F_j: X_j\to \cp^{n-1}$, we denote the restriction of the metric $\omega$ on this fiber by $\omega_|$ and $i: F_p \to  X$ the inclusion map of the fiber in $X$. Using the fact that Im$(V)$ is tangential to $F_p$ and $L_{\mathrm{Im}(V)}\omega = 0$, when pulled back by the map $i$, we have
$$L_{\mathrm{Im}(V)} (\omega_|) = 0,$$ 
hence Im$(V)$ is also a Killing vector field on $(F_p,\omega_|)$. 

\end{proof}
\begin{remark}We note that the equation \eqref{Killing field} only involves the first order derivatives of $V$.\end{remark}

\begin{lemma}\label{lemma on V} For any $R>0$, there exist $c(n,R)>0$ which goes to $\infty$ as $R\to \infty$ and $C(n,R)>0$, such that if $j$ is large enough, then
 
$$\inf_{\partial B_{g_j}(E_j,R) } |V|_{g_j}^2\ge c(n,R),$$ and
$$\sup_{B_{g_j}(E_j, R)}\abs{V}_{g_j}\le C(n,R).$$
\end{lemma}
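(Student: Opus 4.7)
\medskip

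\noindent\textbf{Proof proposal for Lemma \ref{lemma on V}.}
The plan is to compute $|V|_{g_j}^2$ directly in terms of the potential $u(\rho,t_j)$ via the Calabi ansatz, and then read off the bounds from Lemma \ref{estimate lemma}. First I would observe that from the expansion \eqref{expansion of omega} of the metric, for the $(1,0)$ vector field $V=\sum_i z_i\,\partial_{z_i}$ one computes
\begin{equation*}
g_{i\bar j}\, z_i\bar z_j
= e^{-\rho}u'\sum_i |z_i|^2 + e^{-2\rho}(u''-u')\sum_{i,j}|z_i|^2|z_j|^2
= u'+(u''-u') = u''.
\end{equation*}
Thus, in the natural Hermitian norm on $V$ with respect to $\omega=\ddbar u$,
\[
|V|_{\omega}^2 = u''(\rho),
\]
a formula which is moreover consistent with the Euclidean test case $u=e^{\rho}$. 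Rescaling by $g_j=(T-t_j)^{-1}g(t_j)$ gives the key identity
\[
|V|_{g_j}^2(\rho)=\frac{u''(\rho,t_j)}{T-t_j}.
\]

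Once this identity is in hand, both bounds are direct consequences of Lemma \ref{estimate lemma}. For the upper bound, note that any point in $B_{g_j}(E_j,R)$ corresponds to some $\rho\le \rho_{j,R}$, and the second inequality in \eqref{good bound 1} gives $u''(\rho,t_j)\le C(n,R)(T-t_j)$, so
\[
\sup_{B_{g_j}(E_j,R)} |V|_{g_j}^2 \le C(n,R).
\]
For the lower bound on $\partial B_{g_j}(E_j,R)$, the $U(n)$-invariance of $V$ and $g_j$ means that $|V|_{g_j}$ is constant on $\partial B_{g_j}(E_j,R)$, equal to $u''(\rho_{j,R},t_j)/(T-t_j)$. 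The lower bound in \eqref{good bound} then yields
\[
\inf_{\partial B_{g_j}(E_j,R)} |V|_{g_j}^2 \ge c(n,R),
\]
with $c(n,R)\to\infty$ as $R\to\infty$, exactly as asserted.

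The only potential obstacle is the algebraic identity $|V|_\omega^2=u''$; once that is established, the remainder is a mechanical application of the already-proved boundary and interior estimates for $u''$ from Lemma \ref{estimate lemma}. The geometric content of the lemma is really encoded in that lemma: the fact that $u''(\rho_{j,R},t_j)/(T-t_j)$ grows without bound as $R\to\infty$ is what forces $V$ to become long near the boundary, reflecting that $V$ spans the fibers of the $\mathbb{CP}^1$-bundle and these fibers carry nontrivial curvature/area after rescaling.
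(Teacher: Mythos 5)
Your proposal is correct and follows essentially the same route as the paper: both reduce the lemma to the identity $|V|_{g_j}^2=u''(\rho,t_j)/(T-t_j)$, obtained from the expansion \eqref{expansion of omega}, and then quote the interior and boundary estimates for $u''$ from Lemma \ref{estimate lemma}. Your explicit verification of the algebraic identity $g_{i\bar j}z_i\bar z_j=u'+(u''-u')=u''$ is exactly the computation the paper leaves implicit.
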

\begin{proof}
Applying the expansion formula \eqref{expansion of omega} of the metric $g_j$  we have
\begin{equation}\label{norm of V}
\abs{V}_{g_j} = \frac{u''(\rho,t_j)}{T-t_j},
\end{equation}
so by Lemma \ref{estimate lemma}, we have 
\begin{equation}\label{C 0 V} \abs{V}_{g_j}\le C(n,R) = O(R^2),\quad \text{in } B_{g_j}(E_j,R)\end{equation}
and\begin{equation}\label{nontrivial vector field} \text{on }\partial B_{g_j}(E_j,R), \quad \abs{V}_{g_j}\ge c(n,R)\to \infty,~~ \text{as }R\to \infty. \end{equation}
\end{proof}
Lemma \ref{lemma on V} implies in $B_{g_j}(E_j, R)$, $V$ is a nontrivial holomorphic vector field which vanishes exactly at $E_j$ and has uniform positive lower bound on the boundary $\partial B_{g_j}(E_j,R)$.

We will estimate the bound of the derivatives of $V$ with respect to $g_j$. 
\begin{lemma}\label{lemma 100}
There exists a constant $C=C(n)>0$ such that $\abs{\nabla_j V}_{g_j}\le C$ for any $j$, where $\nabla_j V$ denote the covariant derivative of $V$ with respect to the metric $g_j$.
\end{lemma}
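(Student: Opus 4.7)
The plan is to reduce the estimate to the pointwise bounds on $u''/u'$ and $u'''/u''$ already supplied by Lemma \ref{elementary bound}, by fully exploiting the $U(n)$-symmetry. Since both $V=\sum_i z_i\partial_{z_i}$ and the metric $g_j$ are $U(n)$-invariant, the scalar function $|\nabla_j V|_{g_j}$ is $U(n)$-invariant on $\mathbb C^n\setminus\{0\}$. Therefore it suffices to evaluate it at the single representative point $z_r = (r,0,\ldots,0)$ on each $U(n)$-orbit (for $r>0$), and then extend to the exceptional divisor $E$ and to $D_\infty$ by continuity using the Calabi expansions of $u$ near $\rho = \pm\infty$.

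The key algebraic identity is that, since $V$ is a holomorphic $(1,0)$-vector field and $g_j$ is K\"ahler (so the mixed Christoffel symbols vanish), one has $\nabla_k V_{\bar j} = \partial_k V_{\bar j}$, where $V_{\bar j} := g_{i\bar j}V^i$. Using $V^i = z_i$ together with \eqref{expansion of omega}, a one-line calculation gives
\begin{equation*}
V_{\bar j} \;=\; g_{i\bar j}\, z_i \;=\; e^{-\rho} u''\, z_j \;=\; \partial_{\bar j} u',
\end{equation*}
so $\nabla_k V_{\bar j} = \partial_k\partial_{\bar j} u'$. In other words, the covariant derivative $\nabla V$ is governed by the Hermitian form $\ddbar u'$, which has the same Calabi shape as $g_j$ but with $u$ replaced by $u'$.

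At the point $z_r$, both $g_{i\bar j}$ and $\partial_i\partial_{\bar j} u'$ are simultaneously diagonal with eigenvalues $(u''/r^2,\,u'/r^2,\ldots,u'/r^2)$ and $(u'''/r^2,\,u''/r^2,\ldots,u''/r^2)$ respectively. Hence, with the norm $|\nabla_j V|^2_{g_j} = g^{i\bar m} g^{n\bar j}\,(\nabla_i V_{\bar j})\,\overline{(\nabla_m V_{\bar n})}$, a direct computation yields
\begin{equation*}
|\nabla_j V|^2_{g_j}(z_r) \;=\; \left(\frac{u'''(\rho,t_j)}{u''(\rho,t_j)}\right)^{2} \;+\; (n-1)\left(\frac{u''(\rho,t_j)}{u'(\rho,t_j)}\right)^{2}.
\end{equation*}
This expression is manifestly scale-invariant, so replacing $u(\cdot,t_j)$ by $u(\cdot,t_j)/(T-t_j)$ does not change the right-hand side and the factor $(T-t_j)^{-1}$ in $g_j$ drops out. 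By Lemma \ref{elementary bound}, both ratios are uniformly bounded by $C(n)$, and the stated estimate follows at every point of $\mathbb C^n\setminus\{0\}$.

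The principal thing to get right is the identity $V_{\bar j} = \partial_{\bar j} u'$, which replaces a potentially painful Christoffel symbol computation with the short diagonalization above; there is no serious analytic obstacle. The extension across the divisors $E$ and $D_\infty$ uses only the smoothness of the Calabi expansions at $\rho = \pm\infty$, under which $u''/u' \to 0$ and $u'''/u''$ remains bounded, so the global pointwise bound $|\nabla_j V|_{g_j} \le C$ on $X_j$ is immediate.
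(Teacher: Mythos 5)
Your proof is correct and follows essentially the same route as the paper: both arguments reduce to the explicit Calabi-symmetry formula $|\nabla_j V|^2_{g_j}=(n-1)\bigl(\tfrac{u''}{u'}\bigr)^2+\bigl(\tfrac{u'''}{u''}\bigr)^2$ and then invoke Lemma \ref{elementary bound}, the only difference being that you obtain this formula by lowering the index via the identity $V_{\bar j}=\partial_{\bar j}u'$ and diagonalizing at a representative point, whereas the paper computes the Christoffel symbols and the matrix $V^i_{,k}$ directly. Your observation that $\nabla V$ is governed by $\ddbar u'$ is a clean shortcut, but it does not change the substance of the argument.
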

\begin{proof}
We will write $V= V^i \frac{\partial}{\partial z_i}$ for $V^i = z_i$, then $\abs{\nabla V} = V^i_{,k}\overline{V^i_{,k}}$,
where $$V_{,k}^i = \frac{\partial}{\partial z_k} V^i + \Gamma_{kl}^iV^l$$ is the covariant derivative of $V$
and $\Gamma^i_{kl} = g^{i\bar p}\frac{\partial}{\partial z_l} g_{k\bar p}$ is the Levi-Civita connection of a K\"ahler metric $g$. Use the expansion formula \eqref{expansion of omega} (multiplied by $(T-t_j)^{-1}$) of the metric $g_j$, we have
$$V^i_{,k} = \frac{u''}{u'} \delta_{ik} + \bk{\frac{u'''}{u''} - \frac{u''}{u'}} z_i \bar z_k e^{-\rho},$$
observe that when restricted to the exceptional divisor $E = (\rho = -\infty)$ the matrix $(V^i_{,k})$ is of the form (hence has rank $1$)
\begin{equation}\label{limit matrix}
\nabla V|_{E} = \mathrm{diag}(1,0,\ldots,0).
\end{equation} 
We calculate the norm of $\nabla V$:
\begin{equation}\label{gradient V}\begin{split}
\abs{\nabla V}_{g_j} & = n\bk{\frac{u''}{u'}}^2 + 2 \frac{u''}{u'}\bk{ \frac{u'''}{u''} - \frac{u''}{u'}} + \bk{\frac{u'''}{u''} - \frac{u''}{u'}}^2\\
& = (n-1)\bk{\frac{u''}{u'}}^2 + \bk{\frac{u'''}{u''}}^2.
\end{split}\end{equation}
Hence Lemma \ref{lemma 100} follows from the estimates in Lemma \ref{elementary bound}.
\end{proof}

So we have uniform $C^1$ bounds of $V$ with respect to $g_j$. Next we would derive the $C^2$ bounds of $V$ with respect to the metrics $g_j$ on any metric balls $B_{g_j}(E_j, R)$. 
\begin{prop}\label{C2 bound of V}
For any $R>0$, there is a constant $C(n,R)>0$ such that for $j$ large enough we have
\begin{equation*}
\sup_{B_{g_j}(E_j, R)}\bk{|\nabla\nabla V|^2_{\omega_j} + |\bar\nabla\nabla V|_{\omega_j}^2} \le C(n,R),
\end{equation*}
i.e., the $C^2$ bounds of $V$ with respect to the K\"ahler metrics $\omega_j$ hold uniformly on any metric ball $B_{g_j}(E_j, R)$.
\end{prop}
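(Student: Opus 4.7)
The plan is to bound $|\nabla\nabla V|^2_{\omega_j}$ and $|\bar\nabla\nabla V|^2_{\omega_j}$ separately on $B_{g_j}(E_j,R)$. Holomorphicity of $V$ will make the anti-holomorphic piece follow almost immediately from the curvature bound, while the holomorphic piece will require a direct Calabi-symmetry computation closed by one new input coming from the Type I Ricci bound.

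For $|\bar\nabla\nabla V|_{\omega_j}$, I would use that $V$ is a $(1,0)$-holomorphic vector field and $\omega_j$ is K\"ahler, so $V^i{}_{,\bar\ell}=0$. Commuting covariant derivatives once yields, up to signs, $V^i{}_{,k\bar\ell}=-R^i{}_{m k\bar\ell}V^m$, whence
\begin{equation*}
|\bar\nabla\nabla V|_{\omega_j}\le C|\mathrm{Rm}(\omega_j)|_{\omega_j}\,|V|_{\omega_j}.
\end{equation*}
The Type I assumption bounds $|\mathrm{Rm}(\omega_j)|_{\omega_j}$ uniformly, and Lemma \ref{lemma on V} bounds $|V|_{\omega_j}$ on $B_{g_j}(E_j,R)$, so this piece is complete.

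For $|\nabla\nabla V|^2_{\omega_j}$ I would exploit $U(n)$-symmetry: the quantity is itself a $U(n)$-invariant scalar, so it suffices to bound its value at a single radial test point $z=(r,0,\ldots,0)$. Starting from $V^i{}_{,k}=\delta^i_k+\Gamma^i_{km}z^m$ with the Calabi-symmetric Christoffel symbols and differentiating once more, each component $V^i{}_{,k\ell}$ is expressed as a rational function in $u',u'',u''',u^{(4)}$ and $r$. After contracting with $\omega_j$, $|\nabla\nabla V|^2_{\omega_j}$ reduces to a polynomial in the dimensionless ratios $u''/u',\,u'''/u'',\,u^{(4)}/u''$, together with appropriate factors of $T-t_j$ coming from the parabolic rescaling of the metric. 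The first two ratios are uniformly bounded by Lemma \ref{elementary bound}, while $u'$ and $u''$ on $B_{g_j}(E_j,R)$ are controlled by Lemma \ref{estimate lemma}.

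The crucial new ingredient is a bound on $u^{(4)}/u''$. From $v=n\rho-(n-1)\log u'-\log u''$ one computes
\begin{equation*}
v''=-(n-1)\frac{u'''}{u'}+(n-1)\left(\frac{u''}{u'}\right)^2-\frac{u^{(4)}}{u''}+\left(\frac{u'''}{u''}\right)^2,
\end{equation*}
so $u^{(4)}/u''$ is determined by $v''$ and ratios already controlled. The Calabi expansion of the Ricci tensor gives $\mathrm{Ric}(\omega_j)(V,\bar V)=v''$, while $|V|^2_{\omega_j}=u''/(T-t_j)$; Cauchy-Schwarz together with the Type I bound on $\mathrm{Ric}$ then forces $|v''|\le Cu''/(T-t_j)\le C(n,R)$ on $B_{g_j}(E_j,R)$ by Lemma \ref{estimate lemma}, hence $|u^{(4)}/u''|\le C(n,R)$. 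Feeding this back into the expression for $|\nabla\nabla V|^2_{\omega_j}$ yields the desired bound. The main obstacle will be the bookkeeping in the Calabi computation of $|\nabla\nabla V|^2_{\omega_j}$: one must track how $u^{(4)}$ enters the final expression and verify that the apparent singular factors (inverse powers of $u''$ that could blow up as the exceptional divisor is approached) cancel against the factors of $T-t_j$ produced by the parabolic rescaling, so that the bound on $|\nabla\nabla V|^2_{\omega_j}$ is genuinely uniform on $B_{g_j}(E_j,R)$.
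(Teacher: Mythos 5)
Your route is genuinely different from the paper's, and the two halves fare differently. For the mixed derivative your argument is complete and in fact cleaner than the paper's: holomorphicity plus one commutation gives $V^i_{,k\bar l}=-R^i_{~mk\bar l}V^m$, so $|\bar\nabla\nabla V|^2_{\omega_j}\le C|\mathrm{Rm}(\omega_j)|^2_{\omega_j}|V|^2_{\omega_j}\le C(n,R)$ by the Type I bound and Lemma \ref{lemma on V}. The paper instead extracts \emph{both} second-derivative norms at once from the Bochner identity \eqref{bochner identity 1} for $\Delta_{\omega_j}|\nabla V|^2_{\omega_j}$: the curvature terms there are controlled by Type I, Shi's estimates and the $C^0,C^1$ bounds on $V$, so it suffices to bound the scalar $\Delta_{\omega_j}|\nabla V|^2_{\omega_j}$ from above, which by Lemma \ref{gradient and Laplacian} is a purely radial computation --- this avoids ever writing down the tensor $V^i_{,kl}$. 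Your key new input is also essentially the paper's: you control $u^{(4)}$ through $\mathrm{Ric}(V,\bar V)=v''$ and Type I, the paper through the scalar curvature formula \eqref{scalar curvature} and Type I; these are interchangeable.

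The gap is in the step you yourself flag as ``the main obstacle,'' and it is not mere bookkeeping. The quantity $|\nabla\nabla V|^2_{\omega_j}$ contains terms of the schematic form $\frac{T-t_j}{u''}\bk{\frac{u^{(4)}}{u''}-\big(\frac{u'''}{u''}\big)^2}^2$, coming from $g_j^{1\bar 1}$ contracted against the radial derivative of $u'''/u''$. The bound you propose to feed in, $|u^{(4)}/u''|\le C(n,R)$, only controls the bracket by a constant, while $\frac{T-t_j}{u''}\to\infty$ as one approaches $E_j$ (where $u''\to 0$ with $T-t_j$ fixed); so with that input alone the estimate does not close. What is needed is the sharper statement $\big|\frac{u^{(4)}}{u''}-\big(\frac{u'''}{u''}\big)^2\big|\le C(n,R)\frac{u''}{T-t_j}$ --- and you in fact already have it: your intermediate estimate $|v''|\le Cu''/(T-t_j)$, combined with $\big|\frac{u'''}{u'}\big|=\big|\frac{u'''}{u''}\big|\cdot\frac{u''}{u'}\le C\frac{u''}{(n-1)(T-t_j)}$ and the analogous bound for $(u''/u')^2$ from Lemma \ref{elementary bound} and \eqref{u prime bound}, yields exactly this via your displayed formula for $v''$. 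So the fix is to retain the $O\big(u''/(T-t_j)\big)$ form of the Ricci input rather than discarding it for the weaker ratio bound; as written, the decisive cancellation is asserted rather than proved. This cancellation is precisely the role played in the paper's Lemma \ref{lemma above} by substituting the scalar curvature for $u^{(4)}$ before estimating term by term.
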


To prove Proposition \ref{C2 bound of V}, we need the following Bochner type identity.
\begin{lemma}
We have the Bochner type identity: for a K\"ahler metric $\omega$,
\begin{equation}\label{bochner identity 1}\begin{split}
\Delta_\omega \abs{\nabla V} =& \abs{\nabla\nabla V}_\omega + \abs{\bar \nabla\nabla V}_\omega + R_{l\bar m} V^i_{,m}\overline{V^i_{,l}} - R_{m\bar i} V^m_{,l} \overline{V^i_{,l}}\\
&- 2 Re\bk{ R_{\bar i m \bar k l} V_{,k}^m \overline{V^i_{,l}} + R_{m\bar i, l} V^m \overline{V^i_{,l}} }.
\end{split}\end{equation}
\end{lemma}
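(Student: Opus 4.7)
The plan is to compute $\Delta_\omega|\nabla V|^2$ directly from its scalar expression $|\nabla V|^2 = g^{k\bar l}g_{i\bar j}V^i_{,k}\overline{V^j_{,l}}$, using $\Delta_\omega = g^{a\bar b}\nabla_a\nabla_{\bar b}$ and the fact that $\nabla g = 0$ commutes with all metric contractions. Applying Leibniz twice produces exactly four contractions. Two of them are the manifestly positive ``first-derivative-squared'' pieces, $|\nabla\nabla V|^2_\omega$ (the pairing of $V^i_{,ka}$ with its conjugate) and $|\bar\nabla\nabla V|^2_\omega$ (the pairing of $V^i_{,k\bar b}$ with its conjugate), which furnish the first two summands on the right-hand side. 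The remaining two are complex conjugates of one another; denoting one by $T := g^{a\bar b}g^{k\bar l}g_{i\bar j}V^i_{,k\bar b a}\overline{V^j_{,l}}$, all the curvature terms live in $T + \overline T = 2\,\mathrm{Re}\,T$.

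To process $T$, I would first commute the derivatives using the K\"ahler Ricci identity for a $(1,1)$-tensor,
\[
V^i_{,k\bar b a} \;=\; V^i_{,ka\bar b} + R^i_{~p a\bar b}V^p_{,k} - R^p_{~k a\bar b}V^i_{,p},
\]
and trace against $g^{a\bar b}$: the last two terms then contract to Ricci pieces $R^i_{~p}V^p_{,k}$ and $R^p_{~k}V^i_{,p}$, which after pairing with $\overline{V^j_{,l}}$ and lowering indices produce exactly the two Ricci terms $+R_{l\bar m}V^i_{,m}\overline{V^i_{,l}}$ and $-R_{m\bar i}V^m_{,l}\overline{V^i_{,l}}$ of the stated identity (these are Hermitian-real by Hermiticity of Ricci, so they come out of the $\mathrm{Re}$ without the $2$). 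For the remaining trace $g^{a\bar b}V^i_{,ka\bar b}$, holomorphicity of $V$ is essential: the Ricci identity applied to $V$ itself (using $V^i_{,\bar b}=0$) gives the clean identity $V^i_{,a\bar b} = -R^i_{~p a\bar b}V^p$, and one further derivative in $\nabla_k$, together with the second Bianchi identity $\nabla_a R_{p\bar q k\bar b} = \nabla_k R_{p\bar q a\bar b}$ used to convert the trace $g^{a\bar b}\nabla_a R^i_{~p k\bar b}$ into $\nabla_k R^i_{~p}$, contributes the $-2\,\mathrm{Re}(R_{m\bar i,l}V^m\overline{V^i_{,l}})$ piece after adding the conjugate. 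The leftover first-order contraction $-g^{a\bar b}R^i_{~p k\bar b}V^p_{,a}$, rearranged via the K\"ahler symmetries $R_{i\bar j k\bar l} = R_{k\bar j i\bar l}$ and $R_{\bar i m\bar k l} = R_{m\bar i l\bar k}$, then yields the full-Riemann contribution $-2\,\mathrm{Re}(R_{\bar i m\bar k l}V^m_{,k}\overline{V^i_{,l}})$.

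The main obstacle is purely computational bookkeeping: carefully tracking signs, index positions, and K\"ahler symmetries of the Riemann tensor through multiple applications of the Ricci identity, and invoking the second Bianchi identity at exactly the right step so that a full-Riemann trace collapses into a Ricci-derivative term, with the two types of Ricci pieces (one from the commutator, one from the derivative-of-Ricci via Bianchi) correctly separated. No conceptual input beyond the K\"ahler Ricci identity, the second Bianchi identity, and the holomorphicity constraint $V^i_{,\bar b}=0$ is required.
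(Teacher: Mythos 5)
Your outline follows the same route as the paper's proof: expand $\Delta_\omega|\nabla V|^2$ by Leibniz into four terms, recognize $\abs{\nabla\nabla V}_\omega$ and $\abs{\bar\nabla\nabla V}_\omega$ among them, and process the cross terms with the K\"ahler Ricci identity, the holomorphicity relation $V^i_{,a\bar b}=-R^i_{~pa\bar b}V^p$, and the traced second Bianchi identity. Everything in the second half of your argument (the Ricci commutators, the derivative-of-Ricci term from Bianchi, the full-Riemann term) lands where it should.

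There is, however, one concrete bookkeeping error at the start. The two cross terms, $g^{a\bar b}V^i_{,k\bar b a}\overline{V^i_{,k}}$ and $g^{a\bar b}V^i_{,k}\overline{V^i_{,k b\bar a}}$, are \emph{not} exact complex conjugates of one another: the conjugate of the second has the two outer derivatives in the order $a$ then $\bar b$, while the first has them in the order $\bar b$ then $a$, so they differ by the traced commutator $[\nabla_a,\nabla_{\bar b}]$ applied to $\nabla V$ --- which is exactly the pair of Ricci terms $R_{l\bar m}V^i_{,m}\overline{V^i_{,l}}-R_{m\bar i}V^m_{,l}\overline{V^i_{,l}}$. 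That is the actual reason these two terms appear with coefficient $1$ in \eqref{bochner identity 1}. Your justification ("Hermitian-real, so they come out of the Re without the $2$") does not work: a real quantity sitting inside $2\,\mathrm{Re}(\cdot)$ is doubled, so if the Ricci commutator genuinely lived in $T+\overline{T}$ you would get twice the stated Ricci terms. The fix is what the paper does in \eqref{bochner equation 2}: commute only one of the two cross terms into the orientation of the conjugate of the other, collect the resulting Ricci commutator exactly once, and only then write the remaining matched pair as $2\,\mathrm{Re}\bk{V^i_{,l\bar k k}\overline{V^i_{,l}}}$ before applying holomorphicity and the Bianchi identity as you describe. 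With that correction your argument reproduces \eqref{bochner identity 1}.
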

\begin{proof}
This is a direct calculation. 
\begin{equation}\label{bochner equation 1}\begin{split}
\Delta \abs{\nabla V} & = (V^i_{,l} \overline{V^i_{,l}})_{k\bar k}\\
& = V^i_{lk\bar k}\overline{V^i_{,l}} + V^i_{,lk}\overline{V^i_{,lk}} + V^i_{,l\bar k} \overline{V^i_{,l\bar k}} + V^i_{,l} \overline{V^i_{,l\bar k k}}.
\end{split}\end{equation}
By changing the indices, we have
\begin{equation}\label{bochner equation 2}\begin{split}
V_{,lk\bar k}^i &= V^i_{,l\bar k k} + V^i_{,m} R_{l\bar m k\bar k}  - V^m_{,l} R_{~m\bar k k}^i\\
&= V^i_{,l\bar k k} + V^i_{,m} R_{l\bar m}  - V^m_{,l} R_{~m}^i,
\end{split}\end{equation}
and
\begin{equation}\label{bochner equation 3}
V^i_{,l\bar k k} = \bk{V^i_{,\bar k l} - V^m R^i_{~m\bar k l}}_k = - V^m_{,k} R^i_{~m\bar k l}  - V^m R^i_{~m,l},
\end{equation}
where we use the fact that $V$ is a holomorphic holomorphic vector field and the second Bianchi identity. Combining the formulas \eqref{bochner equation 1}, \eqref{bochner equation 2} and \eqref{bochner equation 3}, we can see \eqref{bochner identity 1}.
\end{proof}

\begin{lemma}\label{lemma above}
On the balls $B_{g_j}(E_j, R)\subset X_j$, there exists a constant $C(n,R)>0$ such that 
\begin{equation*}
\Delta_{\omega_j} \abs{\nabla V}_{\omega_j}\le C(n,R), \quad \forall j>>1.
\end{equation*}
\end{lemma}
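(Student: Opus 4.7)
The plan is to exploit the $U(n)$-invariance to reduce everything to an ODE calculation in the radial coordinate $\rho$, and then bound each piece via the Type I curvature bound together with the a priori estimates already established.

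First, since $V$ is a $U(n)$-invariant holomorphic vector field and $g_j$ is $U(n)$-invariant, the function $H := |\nabla V|_{g_j}$ is $U(n)$-invariant, and formula \eqref{gradient V} shows it is in fact scale-invariant: $H(\rho) = (n-1)y^2 + z^2$ with $y := u''/u'$ and $z := u'''/u''$, evaluated at $t = t_j$. By Lemma \ref{gradient and Laplacian} combined with the scaling $\Delta_{g_j} = (T-t_j)\Delta_{g(t_j)}$,
$$\Delta_{g_j} H \;=\; (T-t_j)\Bigl[(n-1)H'/u' + H''/u''\Bigr].$$
Direct differentiation gives $y' = y(z-y)$ and $z' = w - z^2$ with $w := u^{(4)}/u''$, hence $H' = 2(n-1)y\,y' + 2z\,z'$; one more derivative makes $H''$ a polynomial in $y, z, w, w'$. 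By Lemma \ref{elementary bound}, $y$ and $z$ are globally bounded, so the task reduces to controlling $w$ and $w'$ on $B_{g_j}(E_j, R)$.

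To bound $w$, use the Ricci potential $v = n\rho - (n-1)\log u' - \log u''$, whence $v'' = -(n-1)y(z-y) + z^2 - w$. The radial eigenvalue of $\omega_j^{-1}\!\cdot\!\ric(\omega_j)$ is $(T-t_j)v''/u''$, which is uniformly bounded by Type I. Combined with $u''/(T-t_j) \leq C(R)$ from Lemma \ref{estimate lemma}, this yields $|v''| \leq C(R)$ and hence $|w|\leq C(R)$. For $w' = u^{(5)}/u'' - wz$, apply Shi's derivative estimate to the rescaled flow $g_j(s)$ on a fixed backward interval $[-1, 0]$, where the curvature is uniformly bounded by Type I, to obtain $|\nabla \ric|_{g_j}\leq C$; the $U(n)$-symmetric component involving $v'''$ then controls $w'$ by the same argument, giving $|w'|\leq C(R)$.

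With $|H'|, |H''| \leq C(R)$ secured, the first term $(n-1)(T-t_j)H'/u'$ is bounded since $u' \geq (n-1)(T-t_j)$. For the second term $(T-t_j)H''/u''$, one exploits a cancellation near $E$: Taylor expanding via the Calabi condition $u = a_t\rho + U_0(e^\rho)$ at $\rho\to -\infty$ shows $y = O(e^\rho)$, $z = 1 + O(e^\rho)$, $z' = O(e^\rho)$, and consequently $H'' = O(e^\rho)$, which matches the vanishing $u'' = O(e^\rho)$ of the denominator. Thus $H''/u''$, and a fortiori $(T-t_j)H''/u''$, stays bounded uniformly on $B_{g_j}(E_j, R)$, yielding $\Delta_{g_j}H\leq C(n,R)$.

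The main obstacle is the bound on $w'$: the quantity $u^{(5)}/u''$ is not controlled by elementary manipulations of the evolution equations \eqref{prime equation}--\eqref{triple prime equation} alone, so one invokes Shi's derivative estimate via the uniform curvature bound of the rescaled flow on a backward time interval. A secondary subtlety is handling the apparent $1/u''$ singularity as $\rho\to-\infty$; the Calabi ansatz supplies exactly the right vanishing order for $H''$, and confirming this cancellation is an elementary but essential check.
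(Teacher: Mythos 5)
Your overall strategy coincides with the paper's: reduce $\Delta_{\omega_j}\abs{\nabla V}$ to the radial expression \eqref{Laplacian V}, control $u^{(4)}$ through the Type I curvature bound (this is the paper's {\bf Claim}, obtained there from the scalar curvature formula \eqref{scalar curvature}), and reach the fifth-order quantity through Shi's derivative estimate. Your bounds for $w=u^{(4)}/u''$ and, modulo the vagueness about extracting $v'''$ from $\nabla\ric$ rather than from the scalar function $R$ (for which $\abs{\nabla R}=(R')^2/u''$ is a clean identity, and this is the route the paper takes), for $w'$ are correct. The genuine gap is in the last step. Having weakened everything to $|H'|,|H''|\le C(n,R)$, you must still divide $H''$ by $u''$, which degenerates at $E_j$; your proposed fix --- Taylor expanding the Calabi ansatz $u=a_t\rho+U_0(t,e^\rho)$ to claim $H''=O(e^\rho)$ matches $u''=O(e^\rho)$ --- does not give a bound uniform in $j$. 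The implied constants in those two $O(e^\rho)$'s are built from derivatives of $U_0(t_j,\cdot)$ at $0$, and you have no a priori control of their ratios as $t_j\to T$; at fixed $t_j$ the qualitative vanishing orders are automatic, but that is not the issue.

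The correct mechanism, which is exactly what the paper's term-by-term treatment of \eqref{equation middle} encodes, is that the curvature bounds themselves carry the extra factor of $u''$ you need. The radial Ricci eigenvalue bound gives $|v''|\le Cu''/(T-t_j)$ (not merely $|v''|\le C(n,R)$), hence $|z'|=|v''+(n-1)y'|\le Cu''/(T-t_j)$; Shi's estimate gives $|R'|\le C\sqrt{u''}/(T-t_j)^{3/2}$, and multiplying the identity relating $R'$ and $v'''$ by $u''$ yields $|v'''|\le C(n,R)\,u''/(T-t_j)$ on $B_{g_j}(E_j,R)$; finally $(y')^2$ and $y\,y''$ are bounded by $C(u''/u')^2\le Cu''/(T-t_j)$. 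Propagating these through your formulas gives $|H''|\le C(n,R)\,u''/(T-t_j)$, which is the statement actually needed for the second term of \eqref{Laplacian V}, and which you discard by rounding each quantity up to $C(n,R)$. So the argument is repairable using estimates you already have in hand, but as written the final division by $u''$ is not justified.
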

\begin{proof}

From \eqref{gradient and Laplacian} and \eqref{gradient V} we have
\begin{equation}\label{Laplacian V}
\Delta_{\omega_j}\abs{\nabla V} = (n-1)(T-t_j) \frac{(\abs{\nabla V})'}{u'} + \frac{T-t_j}{u''} \bk{\abs{\nabla V}}'',
\end{equation}
where as before $u' = \frac{\partial}{\partial\rho} u(\rho, t_j)$, etc. Our goal is to show that both terms on RHS of \eqref{Laplacian V} are  uniformly bounded on the balls $B_{g_j}(E_j,R)$. To begin with, we need to estimate $u^{(4)}$.

\noindent{\bf Claim:}
There is a uniform constant $C=C(n)>0$ such that
\begin{equation*}
|u^{(4)}|\le C \frac{(u'')^2}{T-t} + \frac{(u''')^2}{u''}.
\end{equation*}
\begin{proof}[Proof of the {\bf Claim}]
By the formula of scalar curvature (see \cite{Song1}), we have
\begin{equation}\label{scalar curvature}
R(\omega(t)) = - \frac{u^{(4)}}{(u'')^2} + \frac{(u''')^2}{(u'')^3} - 2(n-1) \frac{u'''}{u'u''} - (n-1)(n-2)\frac{u''}{(u')^2} + \frac{n(n-1)}{u'}.
\end{equation}
And by Type I condition we have $|R|\le \frac{C}{T-t}$. Combining with Lemma \ref{elementary bound}, it is easy to see the bound on $|u^{(4)}|$.
\end{proof}
The first term on RHS of \eqref{Laplacian V} is equal to
\begin{align*}
&\frac{(n-1)(T-t_j)}{u'}\bk{ 2(n-1) \frac{u''}{u'}\frac{u'''u'- (u'')^2}{(u')^2}  + 2 \frac{u'''}{u''}\frac{u^{(4)}u'' - (u''')^2}{(u'')^2} }\\
=&\frac{2(n-1)^2(T-t_j)}{u'}\frac{u''}{u'} \cdot\frac{u''' u' - (u'')^2}{(u')^2} + \frac{2(n-1)(T-t_j) }{u'}\frac{u'''}{u''}\cdot\frac{u^{(4)} u'' - (u''')^2}{(u'')^2},
\end{align*}
by examining the terms above using Lemma \ref{elementary bound} and {\bf Claim} we see that the first term on RHS of \eqref{Laplacian V} is uniformly bounded above by $C=C(n)>0$.

The second term in RHS of \eqref{Laplacian V} is a little complicated, after some calculations and replacing the $u^{(4)}$ by the scalar curvature \eqref{scalar curvature}, the second term in RHS of \eqref{Laplacian V} is equal to
\begin{equation}\label{equation middle}\begin{split}
&4(n-1)(n-3)\frac{T-t_j}{u''}\bk{ \frac{ \big((u''')^2+ u'' u^{(4)} \big)u' - u''' (u'')^2  }{(u')^3}  } - 6(n-1)\frac{T-t_j}{u''} \bk{\frac{u''}{u'}}^2 \frac{u'''u' - (u'')^2}{(u')^2}\\
&-\frac{T-t_j}{u''} \bk{ 2 R' u'''+ 2 R u^{(4)}  } - 2(n-1) \frac{T-t_j}{u''} \bk{ \frac{ 2 u' u'' u''' u^{(4)} - (u''')^2(u'u''' + (u'')^2)  }{(u' u'')^2} }\\
& + 2n(n-1) \frac{T-t_j}{u''}\frac{u^{(4)} u' - u''' u''}{(u')^2}.
\end{split}\end{equation}
We look at the third term in \eqref{equation middle}. 
By the Type I condition and Shi's derivative estimate along Ricci flow, we know $|\nabla R(\omega(t_j))|\le \frac{C}{(T-t_j)^{3/2}}$, and also we know $\abs{\nabla R} =  \frac{(R')^2}{u''}$, hence 
\begin{equation*}
|R'|\le C\frac{\sqrt{u''}}{(T-t_j)^{3/2}},
\end{equation*}
so we have
\begin{align*}
\Big|-\frac{T-t_j}{u''} \bk{ 2 R' u'''+ 2 R u^{(4)}  }\Big|&\le C \frac{T-t_j}{u''}\bk{ \frac{\sqrt{u''}}{(T-t_j)^{3/2}} |u'''| + \frac{(u'')^2}{(T-t_j)^2} + \frac{1}{T-t_j} \frac{(u''')^2}{u''} }\\
&\le C(n,R),
\end{align*}
by the Lemmas \ref{elementary bound}, \ref{estimate lemma} and the {\bf Claim}.

The other terms in \eqref{equation middle} can be estimated similarly using the lemmas above, and we can see they are all uniformly bounded. Hence we finish the proof of Lemma  \ref{lemma above}.
\end{proof}
\begin{proof}[Proof of Proposition \ref{C2 bound of V}]
Combining with the Bochner identity \eqref{bochner identity 1}, Type I condition and Shi's derivative estimates, i.e., $|Rm(g_j)|_{g_j}, |\nabla Rm(g_j)|_{g_j}\le C$, and Lemma \ref{lemma above}, we can get the bound on $\abs{\nabla\nabla V}_{\omega_j} + \abs{\bar\nabla\nabla V}_{\omega_j}$.

\end{proof}
\begin{prop}\label{limit holomorphic vector field}
There exists a nontrivial holomorphic vector field $V_\infty$ as the subsequential limit of $V$ along the Cheeger-Gromov convergence, such that $V_\infty$ is tangential to the fibers of $F_\infty:X_\infty\to\cp^{n-1}$ and Im$(V_\infty)$ is a nontrivial Killing vector field on each fiber of $F_\infty$.  
\end{prop}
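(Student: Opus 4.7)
My strategy is to transport $V$ from the approximating manifolds $X_j$ to $X_\infty$ via the Cheeger--Gromov diffeomorphisms $\phi_j: U_j \to X_j$ and extract a subsequential limit using the a priori estimates of Lemmas \ref{lemma on V}, \ref{lemma 100} and Proposition \ref{C2 bound of V}. Concretely, let $V_j := \phi_j^* V$ be the pullback vector field on $U_j \subset X_\infty$. For any compact $K \subset X_\infty$, the smooth convergence $\phi_j^* g_j \to g_\infty$ makes these metrics uniformly equivalent on $K$ for large $j$, and $\phi_j(K) \subset B_{g_j}(E_j, R_K)$ for some $R_K < \infty$, since $p_j = \phi_j(p_\infty) \in E_j$ and $K$ has bounded $g_\infty$-diameter from $p_\infty$. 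The uniform $C^0, C^1, C^2$ bounds of $V$ with respect to $g_j$ on $B_{g_j}(E_j, R_K)$ then pull back to uniform $C^0, C^1, C^2$ bounds of $V_j$ on $K$ with respect to $g_\infty$. Arzela--Ascoli together with a diagonal argument along an exhaustion of $X_\infty$ yields a subsequential $C^{1,\alpha}_{loc}$-limit $V_\infty$ on $X_\infty$.

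The structural equations
$$
\bar\partial_{J_j} V = 0, \qquad dF_j(V) = 0, \qquad L_{\mathrm{Im}(V)}\omega_j = 0
$$
all involve at most first derivatives of $V$ (for the Killing equation, cf.\ the Remark following \eqref{Killing field}). Pulling them back by $\phi_j$ and using $\phi_j^* J_j \to J_\infty$ and $\phi_j^*\omega_j \to \omega_\infty$ smoothly, together with $F_j \circ \phi_j \to F_\infty$ in $C^{1,\alpha}_{loc}$ from Subsection \ref{subsection 3.2}, we pass each to the limit:
$$
\bar\partial_{J_\infty} V_\infty = 0, \qquad dF_\infty(V_\infty) = 0, \qquad L_{\mathrm{Im}(V_\infty)}\omega_\infty = 0.
$$
Hence $V_\infty$ is holomorphic, tangent to the fibers of $F_\infty$, and its imaginary part is Killing on $(X_\infty, g_\infty)$; restricting to a fiber, $\mathrm{Im}(V_\infty)$ is a Killing field on that fiber with the induced metric.

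For nontriviality, the $U(n)$-symmetry forces $|V|_{g_j}^2 = u''(\rho,t_j)/(T-t_j)$ to depend only on $\rho$, so Lemma \ref{lemma on V} gives the pointwise lower bound $|V|_{g_j}(q) \geq \sqrt{c(n,R)} > 0$ for every $q$ with $d_{g_j}(q, E_j) = R > 0$. The exceptional divisors $E_j$, isometric to the fixed $(\cp^{n-1}, (n-1)g_{FS})$ by Lemma \ref{diameter bound} and all passing through $p_j$, pull back via $\phi_j^{-1}$ to a sequence of isometric embeddings $(\cp^{n-1}, (n-1)g_{FS}) \hookrightarrow X_\infty$ with uniform higher-order bounds, and a further Arzela--Ascoli subsequence produces a smooth limit isometric embedding with image $E_\infty \subset X_\infty$. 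Since each $F_j|_{E_j}$ is a biholomorphism onto $\cp^{n-1}$, so is $F_\infty|_{E_\infty}$, i.e.\ $E_\infty$ is a holomorphic section of $F_\infty$ meeting every fiber at exactly one point. Passing the radial lower bound through the Cheeger--Gromov limit gives $|V_\infty|_{g_\infty}(q) \geq \sqrt{c(n, d_{g_\infty}(q, E_\infty))} > 0$ for all $q \notin E_\infty$, so the zero set of $V_\infty$ is exactly $E_\infty$. As $V_\infty$ is a section of $T^{1,0}X_\infty$, we have $\mathrm{Im}(V_\infty) = -J_\infty \mathrm{Re}(V_\infty)$, which vanishes precisely where $V_\infty$ does; hence $\mathrm{Im}(V_\infty)$ restricts to a nontrivial Killing vector field on every fiber of $F_\infty$.

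I expect the main obstacle to be this last paragraph: correctly identifying the limit $E_\infty$ as a holomorphic section of $F_\infty$ and propagating the radial lower bound on $|V|$ through the Cheeger--Gromov limit, so as to pin the zero set of $V_\infty$ to a codimension-$2$ section rather than allow it to swallow an entire fiber. The earlier steps (extraction of $V_\infty$ and passing the structural equations to the limit) are essentially standard, given the a priori estimates already in place.
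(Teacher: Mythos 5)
Your proposal is correct and follows essentially the same route as the paper: extract $V_\infty$ from the uniform $C^0,C^1,C^2$ bounds of Lemmas \ref{lemma on V}, \ref{lemma 100} and Proposition \ref{C2 bound of V}, pass the first-order equations (holomorphicity, $dF_j(V)=0$, the Killing equation \eqref{Killing field}) to the limit, and use the radial lower bound \eqref{nontrivial vector field} together with the vanishing of $V$ on $E_j$ to pin the zero set of $V_\infty$ to a single point on each fiber. The only cosmetic difference is that you organize the zero-set argument around a limit divisor $E_\infty$ while the paper argues fiber by fiber with limits of points $x_j\in F_j^{-1}(y)\cap E_j$; both yield the same conclusion.
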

\begin{proof}
Along the Cheeger-Gromov convergence \eqref{smooth convergence}, by the (locally) uniform $C^0,C^1, C^2$ bound of the holomorphic vector fields $V_j = V$ with respect to the metrics $\omega_j$, up to a subsequence, $V_j$ converge in $C_{loc}^{1,\alpha}$ norm (in the Cheeger-Gromov sense) to a vector field $V_\infty$ on $X_\infty$, which is holomorphic with respect to the complex structure $J_\infty$.  The holomorphic vector field $V_\infty$ satisfies similar $C^0, C^1,C^2$ bounds as $V_j$, when restricted on the balls $B_{g_\infty}(p_\infty,R)$. %\begin{comment}\textcolor{blue}{\eqref{C 0 V} implies that on $B_{g_\infty} (p_\infty,R)$, $\abs{V_\infty}_{g_\infty}\le C R^2$, i.e., $V_\infty$ has at most linear growth (Actually, this also follows from the gradient bound of $V_\infty$, that $|\nabla V_\infty|_{g_\infty}^2 = 1$)}

To see $V_\infty$ is nontrivial, there exists a sequence of points $x_j\in \partial B_{g_j}(E_j, R)$ converging to an $x_\infty \in X_\infty$, by \eqref{nontrivial vector field}, we see that $|V_\infty|(x_\infty)\ge c(n,R)>0$, hence $V_\infty$ is nontrivial. 

On the other hand, the vector fields $V_j$ vanish identically on the exceptional divisors $E_j$ in $B_{g_j}(E_j,R)$, and by taking limits, $V_\infty$ also has zero points, e.g. $V_\infty(p_\infty) = 0$. Hence the zero set of $V_\infty$ is a nonempty analytic set, since $V_\infty$ is a holomorphic vector field, and we denote this zero set by $\tilde E_\infty$. It's clear that if a sequence of points $x_j\in E_j$ converges to $x_\infty\in X_\infty$, then $x_\infty\in \tilde E_\infty$.  %By the smooth convergence and \eqref{limit matrix}, we see that 
%\begin{equation*}
%\nabla_{g_\infty} V_\infty|_{E_\infty} = \mathrm{diag}(1,0,\ldots,0),
%\end{equation*}
%has rank $1$, hence the implicit function theorem implies $E_\infty$ is an analytic co-dimension one manifold. Moreover, it's clear that $E_\infty$ is contained in a closed subset ${B_{g_\infty}(p_\infty,R)}$ for $R$ suitably large, therefore $E_\infty$ is compact.

Since $V_j$ is tangential to the fibers of $F_j: X_j\to \cp^{n-1}$, $dF_j(V_j) = 0$, from the $C^{1,\alpha}$ convergence of $F_j, V_j$, the limit vector field $V_\infty$ satisfies $dF_\infty(V_\infty) = 0$, i.e., $V_\infty$ is  tangential to the fibers of $F_\infty: X_\infty\to \cp^{n-1}$. % This also implies that $V_\infty$ is not trivial on each fibre of $F_\infty$.

Choose a fiber $F_\infty^{-1}(y)$ of $F_\infty$ (here $y\in \cp^{n-1}$). There exists a sequence of points $x_j\in F_j^{-1}(y)\cap E_j$ which converge up to a subsequence to $x_\infty\in X_\infty\cap F_\infty^{-1}(y)$, such that $V_\infty (x_\infty) = 0$. On the other hand, for any other point $x_\infty'\in F_\infty^{-1}(y)$, we may assume $d_\infty(x_\infty,x_\infty') = R>0$ and there exists a subsequence of $x_j'\in X_j\cap F_j^{-1}(y)$ with $d_{g_j}(x_j,x_j') >R/2>0$ which converges to $x_\infty'$, then by \eqref{nontrivial vector field}, we see $|V_\infty|_{g_\infty}(x_\infty')\ge c(n,R)>0$. We remark that \eqref{gradient V} implies $\abs{\nabla V_\infty}_\infty (x_\infty)= 1$.

%Moreover, \eqref{nontrivial vector field} implies that $V_\infty$ has a single zero point, and \eqref{gradient V} implies at this zero point $\abs{\nabla V_\infty}_\infty = 1$. 

Thus on each fiber $F_\infty^{-1}(y)$ of $F_\infty$,  $V_\infty$ is a holomorphic vector field with simple single zero point. From \eqref{Killing field} and $C^{1,\alpha}$ convergence of $V_j$, the imaginary part $\mathrm{Im}(V_\infty)$ of $V_\infty$ is a Killing vector field of $g_\infty$. Since $\mathrm{Im}(V_\infty)$ is tangential to the fiber, it follows that on the fiber $F_\infty^{-1}(y)$, with respect to the restriction metric  of $g_\infty$ to $F_\infty^{-1}(y)$, the vector field $\mathrm{Im}(V_\infty)$ is also Killing. 
\end{proof}

\begin{corr}
The fibers of $F_\infty:X_\infty\to \cp^{n-1}$ are either biholomorphic to $\mathbb C$ or the disk $D\subset\mathbb C$.
\end{corr}
\begin{proof}
Fix any fiber $F_\infty^{-1}(y)$ of $F_\infty$, which is a complete noncompact Riemann surface.  
From the proof of Proposition \ref{limit holomorphic vector field}, we know the vector field $\mathrm{Im}(V_\infty)$ is Killing in $F_\infty^{-1}(y)$ and has a single zero point in $F_\infty^{-1}(y)$, from Lemma 1 in \cite{CLN}, we  conclude that topologically $F_\infty^{-1}(y)$ is $\mathbb R^2$, which in particular is simply connected. By the uniformization theorem for Riemann surfaces, $F_\infty^{-1}(y)$ is either $\mathbb C$ or the holomorphic disk $D\subset \mathbb C$.
\end{proof}
%%%%%%% section 4%%%%%%%%%%%%%%%%%%%%%%%%%%%%%%%%%%%%%%%%%%%%%%%%
\section{$U(n)$-actions on the limit space $X_\infty$}\label{section 4}

%%%%%%%%%%%%subsection
\subsection{$U(n)$-actions}

We first define a metric on the compact Lie group $U(n)$ by 
\begin{equation}\label{metric on Un}d_0(\sigma_1,\sigma_2):=\max\{d_{\mathbb C^n}(\sigma_1(x),\sigma_2(x))| \text{ for all }x\in S^{2n-1}\subset \mathbb C^n\}\end{equation}
where $d_{\mathbb C^n}$ is the Euclidean distance on $\mathbb C^n$ and $\sigma_1,\sigma_2\in U(n)$ act in the standard way on $S^{2n-1}\subset \mathbb C^n$. We remark that the metrics on the compact group  $U(n)$ are all equivalent, so any other metrics on $U(n)$ will play the same role.
\begin{lemma}
 $d_0$ defines a metric on the compact group $U(n)$.
\end{lemma} 
\begin{proof}We only need to prove that $d_0$ satisfies the triangle inequality, since the $U(n)$-action on $S^{2n-1}$ is effective. For any $\sigma_1,\sigma_2,\sigma_3\in U(n)$, any $\epsilon>0$, there exists an $x_\epsilon\in S^{2n-1}$ such that $d_0(\sigma_1,\sigma_2)\le d_{\mathbb C^n}(\sigma_1(x_\epsilon),\sigma_2(x_\epsilon)) + \epsilon$, then
$$d_0(\sigma_1,\sigma_2)\le d_{\mathbb C^n}(\sigma_1(x_\epsilon),\sigma_3(x_\epsilon)) + d_{\mathbb C^n}(\sigma_2(x_\epsilon),\sigma_3(x_\epsilon))+\epsilon\le d_0(\sigma_1,\sigma_3) + d_0(\sigma_2,\sigma_3) + \epsilon,$$
then letting $\epsilon\to 0$ we can get the triangle inequality.
\end{proof}

\begin{comment}
For each $\sigma\in U(n)$, we know $\sigma\in \mathrm{Isom}(g_j)$ and acts on $X_j$ holomorphically, hence 
$$\sigma: (X_j, g_j,J_j)\to (X_j,g_j,J_j)$$
is a holomorphic map preserving the metrics $g_j$ and $J_j$. $\sigma^*g_j = g_j$ and hence the energy density of the map $\sigma$
is $\abs{\nabla_j \sigma}_{g_j} = n$, where $\nabla_j$ is the covariant derivative with respect to the metrics $g_j$ and $\sigma_j^* g_j$. Since $\sigma$ is holomorphic hence also harmonic. Then by Bochner type identity we have
\begin{equation}\label{2 bochner formula}
0=\Delta_j \abs{\nabla_j \sigma} = \abs{\nabla\nabla_j \sigma} + \ric_{g_j}(\nabla_j \sigma, \overline{\nabla_j \sigma}) - R(\sigma^* g_j)^\alpha_{~\beta \bar \gamma\delta} \overline{\sigma^\alpha_{,i}} \sigma^\delta_{,i} \sigma^\beta_{,k}\overline{\sigma^\gamma_{,k}},
\end{equation}
%%%%%
\end{comment}

For each $\sigma\in U(n)$, we consider the map $\chi_{j,\sigma}$  %defined by %$B_{g_\infty}(p_\infty, R)\subset U_j\subset X_\infty$ for $j$ large enough. For each $\sigma\in U(n)$, we define a map
\begin{equation}\label{eqn:chi j}\chi_{j,\sigma}: (X_j, g_j,J_j)\to (X_j,g_j,J_j),\end{equation}
defined by $\chi_{j,\sigma}(x) = \sigma(x)$. Recall the $U(n)$-action on $X_j=X$ is induced from the standard $U(n)$-action on $\mathbb C^n\backslash\{0\}$. $\sigma$ acts isometrically  and holomorphically on $(X_j,g_j,J_j)$, so  $\chi_{j,\sigma}$ is a holomorphic isometry. Thus the energy density of $\chi_{j,\sigma}$, $|\nabla_j \chi_{j,\sigma}|^2_{g_j} = n$, where $\nabla_j$ is the connection induced from $ g_j$ and $\chi_{j,\sigma}^*  g_j$. Since $\chi_{j,\sigma}$ is holomorphic, hence also  harmonic. For notation convenience we denote $F = \chi_{j,\sigma}$, then by Bochner formula,
\begin{equation}\label{2 bochner formula}
0=\Delta_j \abs{\nabla_j F} = \abs{\nabla\nabla_j F} + \ric_{ g_j}(\nabla_j F, \overline{\nabla_j F}) - R(F^* g_j)_{\bar \alpha\beta \bar \gamma\delta} \overline{F^\alpha_{,i}} F^\delta_{,i} F^\beta_{,k}\overline{F^\gamma_{,k}},
\end{equation}
%\end{comment}
where $\Delta_j = \Delta_{ g_j}$ and  $R(F^*g_j)_{\bar\alpha\beta\bar \gamma\delta}$ denotes the sectional curvature of the pulled-back metric $F^* g_j$, which is uniformly bounded by the Type I condition, so is the Ricci curvature of $g_j$. Hence  by \eqref{2 bochner formula} and  $\abs{\nabla_j F} = n$, we see that $\abs{\nabla\nabla_j F}\le C$ for a uniform constant $C=C(n)$. Therefore, we get the  uniform $C^2$ bound of the maps $\chi_{j,\sigma}$, independent of $j,\sigma$. 

Since $\chi_{j,\sigma}$ is an isometry and maps $E_j$ to itself, which has fixed diameter $D_n$ under the metric $g_j$, we have for any $R>0$, the image of $B_{g_j}(p_j,R)$ under $\chi_{j,\sigma}$ is contained in $B_{g_j}(p_j, R+D_n)$. Therefore, the maps $\chi_{j,\sigma}$ are locally uniformly bounded, and satisfy uniform $C^1,C^2$ bounds, so along the Cheeger-Gromov convergence \eqref{smooth convergence},  up to a subsequence of $j$, $\chi_{j,\sigma}$ converge to a limit map 
\begin{equation*}
\chi_{\infty,\sigma}:(X_\infty,g_\infty,J_\infty)\to (X_\infty,g_\infty,J_\infty),
\end{equation*} 
which  preserves the metric $g_\infty$ and complex structure $J_\infty$, hence an isometry and holomorphic map. The map $\chi_{\infty,\sigma}$ is defined through a subsequence of $\chi_{j,\sigma}$. For different $\sigma\in U(n)$, the subsequence might be different. Our next lemma will show that there exists a subsequence of $j$, such that for {\em all} $\sigma\in U(n)$, $\chi_{j,\sigma}$ converge to limit maps $\chi_{\infty,\sigma}$.

\begin{comment}%%%%
Since the target space of $\chi_{j,\sigma}$ is compact, by the uniform $C^1, C^2$ bounds of $\chi_{j,\sigma}$, up to a subsequence of $j$, 
 %Noting that $\phi_j^* g_j\to g_\infty$ in $C^\infty$-topology on $B_{g_\infty}(p_\infty, R)$, hence up to a subsequence of $j$,
  $\chi_{j,\sigma}$ converge in $C^{1,\alpha}$ topology to a map $$\chi_{\infty,\sigma}: (B_{g_\infty}(p_\infty, R),g_\infty, J_\infty) \to (B_{g_\infty}(p_\infty, 2R), g_\infty, J_\infty).$$ 
{\em Apriorily} the subsequence $j$ depends on $\sigma\in U(n)$, but later on we will see this can be chosen to be independent of $\sigma\in U(n)$. With respect to the complex structures $\phi_j^* J_j$, which converge in $C^\infty$ sense to $J_\infty$ on $B_{g_\infty}(p_\infty,R)$, the maps $\chi_{j,\sigma}$ are holomorphic. Then the $C^{1,\alpha}$ convergence of $\chi_{j,\sigma}$ implies that the limit map $\chi_{\infty, \sigma}$ is holomorphic with respect to $J_\infty$.
\end{comment}%%%%
\begin{lemma}\label{lemma 4.2}
For any $R>0$, there exists a $C(n,R)>0$ such that for $j$ large enough, we have
\begin{equation*}
d_{g_j}(\sigma_1(x),\sigma_2(x))\le C(n,R)d_0(\sigma_1,\sigma_2), \quad \forall \sigma_1,\sigma_2\in U(n)
\end{equation*}
and $x\in B_{g_j}(E_j,R)\subset (X_j,g_j,J_j)$, where $d_0$ is the metric on $U(n)$ defined in \eqref{metric on Un}
\end{lemma}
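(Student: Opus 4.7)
The plan is to bound $d_{g_j}(\sigma_1(x),\sigma_2(x))$ by the length of an explicit curve joining $\sigma_1(x)$ and $\sigma_2(x)$ inside the $U(n)$-orbit of $x$, computing this length via the Calabi expansion of $g_j$ together with the a priori estimates from Lemma \ref{estimate lemma}.

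Since $d_0$ is bi-invariant and agrees with the operator-norm distance $\|\sigma_1^{-1}\sigma_2-\mathrm{id}\|_{\mathrm{op}}$, diagonalizing $\tau:=\sigma_1^{-1}\sigma_2\in U(n)$ and taking logarithm branches with eigenvalues $i\theta_k\in(-\pi,\pi]$ produces $\xi\in\mathfrak{u}(n)$ with $e^\xi=\tau$ and $\|\xi\|_{\mathrm{op}}=\max_k|\theta_k|$. The elementary inequality $|e^{i\theta}-1|\ge (2/\pi)|\theta|$ for $|\theta|\le \pi$ then yields
$$\|\xi\|_{\mathrm{op}}\le \frac{\pi}{2}\,d_0(\sigma_1,\sigma_2).$$
Define $\gamma(s):=\sigma_1 e^{s\xi}(x)$ for $s\in[0,1]$, a curve from $\sigma_1(x)$ to $\sigma_2(x)$. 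Because every element of $U(n)$ acts as a $g_j$-isometry preserving $E_j$ setwise, the distance to $E_j$ is invariant under the action, so $\gamma(s)$ stays in $B_{g_j}(E_j,R)$ for all $s$.

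Writing $\gamma'(s)=(\sigma_1 e^{s\xi})_*(\xi\cdot x)$ with $\xi\cdot x$ the holomorphic tangent vector at $x$ having components $(\xi\cdot x)^i=\xi^i_k z^k$, the isometry property of $\sigma_1 e^{s\xi}$ gives $|\gamma'(s)|_{g_j(\gamma(s))}=|\xi\cdot x|_{g_j(x)}$, constant in $s$; hence the $g_j$-length of $\gamma$ equals $|\xi\cdot x|_{g_j(x)}$. Applying the Calabi formula \eqref{expansion of omega} to $V=\xi\cdot x$ and using the pointwise bounds $|\xi z|^2\le \|\xi\|_{\mathrm{op}}^2 e^\rho$ and $|z^*\xi z|^2\le \|\xi\|_{\mathrm{op}}^2 e^{2\rho}$ (recall $e^\rho=|z|^2$), one obtains
$$|\xi\cdot x|^2_{g_j(x)}\le \frac{2\max(u'(\rho,t_j),u''(\rho,t_j))}{T-t_j}\,\|\xi\|_{\mathrm{op}}^2.$$
Lemma \ref{estimate lemma} bounds the prefactor by $C(n,R)$ on $B_{g_j}(E_j,R)$ for $j$ large, and combining with the bound on $\|\xi\|_{\mathrm{op}}$ from the previous paragraph yields the desired inequality. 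The case $x\in E_j$ follows by continuity, since the induced metric on $E_j$ is the fixed $(n-1)g_{FS}$ on which the $U(n)$-action is isometric.

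The only substantive technical point is establishing $\|\xi\|_{\mathrm{op}}\le C\,d_0(\sigma_1,\sigma_2)$ uniformly in $\sigma_1,\sigma_2$, which the diagonalization/logarithm argument resolves despite the exponential map of $U(n)$ being only locally a diffeomorphism. The remainder is a routine length calculation exploiting the $U(n)$-invariance of $g_j$ and the a priori bounds already established.
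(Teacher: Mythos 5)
Your proof is correct and follows essentially the same route as the paper: both reduce the problem to bounding the $g_j$-length of an explicit curve in the $U(n)$-orbit of $x$ (which lies on a Euclidean sphere, hence stays in $B_{g_j}(E_j,R)$) via the Calabi expansion \eqref{expansion of omega} and the bounds $u',u''\le C(n,R)(T-t_j)$ from Lemma \ref{estimate lemma}. The only difference is cosmetic: the paper takes a short arc in the Euclidean sphere and uses the comparison $g_j\le C(n,R)|z|^{-2}\omega_{\mathbb C^n}$, whereas you take the one-parameter-subgroup orbit $\sigma_1e^{s\xi}(x)$ and add the elementary estimate $\|\xi\|_{\mathrm{op}}\le\tfrac{\pi}{2}d_0(\sigma_1,\sigma_2)$, which is sound.
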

\begin{proof}
By the expansion formula of $g_j = \frac{1}{T-t_j}g(t_j)$ in \eqref{expansion of omega}, and Lemma \ref{estimate lemma} we have on $B_{g_j}(E_j,R)\backslash E_j\subset \mathbb C^n\backslash\{0\}$  (here we identify $B_{g_j}(E_j,R)\backslash E_j$ as a punctured ball in $\mathbb C^n\backslash \{0\}$)
\begin{equation}\label{gj local}\begin{split}g_j &= \frac{u'}{T-t_j}\bk{ \frac{\delta_{ik}}{\abs{z}} - \frac{\bar z_i z_k}{|z|^4} }dz_i\wedge d\bar z_k + \frac{u''}{T-t_j} \frac{\bar z_i z_k}{|z|^4} dz_i\wedge d\bar z_k\\
&\le \frac{C(n,R)}{\abs{z}} \omega_{\mathbb C^n},
\end{split}\end{equation}
and $\omega_{\mathbb C^n}$ is the Euclidean metric on $\mathbb C^n$, so for any $x\in B_{g_j}(E_j,R)\backslash E_j\subset \mathbb C^n\backslash\{0\}$, and $\sigma_1,\sigma_2\in U(n)$, $\sigma_1(x), \sigma_2(x)$ remain in $B_{g_j}(E_j,R)\backslash E_j\subset \mathbb C^n\backslash\{0\}$ and the Euclidean norm $|\sigma_1(x)| = |\sigma_2(x) |= |x|$. Choose a curve $\gamma\subset S^{2n-1}_{|x|}$, the Euclidean sphere in $\mathbb C^n\backslash\{0\}$ with radius $|x|$, connecting $\sigma_1(x)$ and $\sigma_2(x)$ and the Euclidean length $L_{\mathbb C^n}(\gamma)\le 2d_{\mathbb C^n}(\sigma_1(x),\sigma_2(x))$. Hence by the estimate \eqref{gj local}, we have %(by continuity the estimate below also holds for $x\in E_j$)
\begin{equation}\label{Un}\begin{split}
d_{g_j}(\sigma_1(x), \sigma_2(x))&\le d_{g_j}(\gamma)\\ &\le \frac{C(n,R)}{|x|} L_{\mathbb C^n}(\gamma) \\
&\le \frac{2C(n,R)}{|x|} d_{\mathbb C^n}(\sigma_1(x),\sigma_2(x)) \\
&= 2C(n,R)d_{\mathbb C^n}\bk{\sigma_1\Big(\frac{x}{|x|}\Big), \sigma_2\Big(\frac{x}{|x|}\Big)}\\
&\le 2C(n,R) d_0(\sigma_1,\sigma_2).\end{split}\end{equation}
By continuity, \eqref{Un} also holds for $x\in E_j$.

\end{proof}
If we define maps
\begin{equation}\label{maps}\chi_j: (X_j, g_j,J_j)\times (U(n), d_0)\to (X_j,g_j,J_j)\end{equation}
by $\chi_j(x,\sigma) = \chi_{j,\sigma}(x)$, which are holomorphic in $x$ and  satisfy
$$d_{ g_j}(\chi_j(x,\sigma), \chi_j(y,\sigma)) = d_{g_j}(x,y),\quad \forall x, y\in X_j,\sigma\in U(n),$$
and by Lemma \ref{lemma 4.2} we also have $$d_{g_j}(\chi_j(x,\sigma_1),\chi_j(x,\sigma_2))\le C(n,R)d_0(\sigma_1,\sigma_2),\quad \forall x\in B_{g_j}(E_j,R), \sigma_1,\sigma_2\in U(n).$$
Hence for any $x,y\in B_{g_j}(E_j,R)$ and $\sigma_1,\sigma_2\in U(n)$
\begin{equation}\begin{split}
d_{g_j}(\chi_j(x,\sigma_1), \chi_j(y, \sigma_2))& \le d_{g_j}(\chi_j(x,\sigma_1),\chi_j(y,\sigma_1)) + d_{g_j}(\chi_j(y,\sigma_1),\chi_j(y,\sigma_2))\\
&\le d_{g_j}(x,y) + C(n,R)d_0(\sigma_1,\sigma_2)
\end{split}\end{equation}
which implies the maps $\chi_j$ defined in \eqref{maps} are locally uniformly bounded and locally equi-continuous with respect to the given product metrics.  Moreover the maps $\chi_j(\cdot,\sigma)$ satisfy uniform $C^1, C^2$ bounds for any $\sigma\in U(n)$, hence by Arzela-Ascoli theorem, up to a subsequence of $j$, $\chi_j$ converge to a map
\begin{equation}\label{limit map}\chi_\infty: (X_\infty,g_\infty,J_\infty)\times (U(n),d_0)\to (X_\infty, g_\infty, J_\infty),\end{equation}
and for each $\sigma\in U(n)$, the map $$\chi_\infty(\cdot,\sigma): (X_\infty, g_\infty, J_\infty) \to (X_\infty, g_\infty,J_\infty)$$ is an isometry and $J_\infty$-holomorphic. 

%Letting $R\to \infty$ and a diagonal argument applied on the sequence of maps $\chi^R_j(\cdot,\cdot)$, we may suppose that the limit maps $\chi^R_\infty$ extend to a global map \begin{equation}\label{limit map}\chi_\infty: (X_\infty,g_\infty)\times (U(n),d_0)\to (X_\infty,g_\infty).\end{equation}
% Moreover, for each $\sigma\in U(n)$, the map $\chi_\infty(\cdot, \sigma): (X_\infty,g_\infty)\to (X_\infty,g_\infty)$ is also an isometry and $J_\infty$-holomorphic.

\begin{lemma}\label{product lemma}
The map $\chi_\infty$ defined in \eqref{limit map} satisfies
\begin{equation}\label{group structure}
\chi_\infty(x,\sigma_1\sigma_2) = \chi_\infty(\chi_\infty(x,\sigma_2), \sigma_1), \quad \forall x\in X_\infty, \sigma_1,\sigma_2\in U(n).
\end{equation}
\end{lemma}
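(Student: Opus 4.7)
My plan is to pass to the limit in the cocycle identity satisfied by the genuine $U(n)$-action on each $X_j$. Since each $\chi_{j,\sigma}$ is simply the action of $\sigma$, we have
\begin{equation}\label{eqn:j-cocycle}
\chi_j(y, \sigma_1\sigma_2) = \chi_j(\chi_j(y, \sigma_2), \sigma_1), \quad \forall y \in X_j, \ \sigma_1, \sigma_2 \in U(n).
\end{equation}
Taking $y = \phi_j(x)$ and sending $j \to \infty$ should yield the lemma; the content is ensuring that the composition on the right-hand side survives the limit interchange.

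Fix $x \in X_\infty$ and $\sigma_1, \sigma_2 \in U(n)$. Let $\phi_j: U_j \to X_j$ be the diffeomorphisms of \eqref{smooth convergence}, and set $x_j := \phi_j(x)$, $y_j := \chi_j(x_j, \sigma_2)$, $y_\infty := \chi_\infty(x, \sigma_2)$. The Arzela-Ascoli convergence $\chi_j \to \chi_\infty$ established in \eqref{limit map} (jointly in $(x,\sigma)$, interpreted via the pulled-back maps $\phi_j^{-1} \circ \chi_j(\cdot, \sigma) \circ \phi_j$) gives $\phi_j^{-1}(y_j) \to y_\infty$ in $X_\infty$. Applying $\phi_j^{-1}$ to \eqref{eqn:j-cocycle} at $y = x_j$ and letting $j \to \infty$, the left-hand side tends to $\chi_\infty(x, \sigma_1\sigma_2)$ by the definition of $\chi_\infty$. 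For the right-hand side I rewrite
\[ \phi_j^{-1}(\chi_j(y_j, \sigma_1)) = \bigl(\phi_j^{-1} \circ \chi_j(\cdot, \sigma_1) \circ \phi_j\bigr)\bigl(\phi_j^{-1}(y_j)\bigr); \]
since $\chi_j(\cdot, \sigma_1)$ is a $g_j$-isometry, the conjugated maps are uniformly Lipschitz (with constants $\to 1$) and converge locally uniformly to $\chi_\infty(\cdot, \sigma_1)$, while $\phi_j^{-1}(y_j) \to y_\infty$; hence the right-hand side converges to $\chi_\infty(y_\infty, \sigma_1) = \chi_\infty(\chi_\infty(x, \sigma_2), \sigma_1)$. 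Equating the two limits yields the claim.

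The main obstacle, though modest, is purely one of bookkeeping: the maps in \eqref{eqn:j-cocycle} live on different manifolds $X_j$, and the convergence $\chi_j \to \chi_\infty$ is only defined after conjugation by $\phi_j$. The argument goes through because two uniform estimates are already in place: each $\chi_j(\cdot, \sigma)$ is a $g_j$-isometry (so its $\phi_j$-conjugate is uniformly Lipschitz), and Lemma \ref{lemma 4.2} supplies equicontinuity in the $\sigma$-variable. These are exactly what drove the Arzela-Ascoli step producing $\chi_\infty$, and the same two facts justify passing to the limit inside the composition above.
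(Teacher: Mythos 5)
Your proof is correct and follows essentially the same route as the paper: both use that $\chi_{j,\sigma}$ is the genuine $U(n)$-action on $X_j$, so the cocycle identity holds exactly at each finite stage, and then pass to the limit using the (subsequential) convergence $\chi_j \to \chi_\infty$. The paper states the limit step in one line; your extra care with the $\phi_j$-conjugation, the convergence of the intermediate points $\phi_j^{-1}(y_j) \to y_\infty$, and the uniform Lipschitz bound on the isometries is exactly the bookkeeping that justifies it.
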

\begin{proof}
For any $x\in X_\infty$ and $\sigma_1,\sigma_2\in U(n)$, choose a sequence of $x_j\in X_j$ converging to $x$. For each $j$ from the definition we have
\begin{align*}
\chi_j(x_j, \sigma_1\sigma_2) &= \chi_{j,\sigma_1\sigma_2} (x_j) = \sigma_1\sigma_2(x_j)=\sigma_1\big(\sigma_2(x) \big)\\
& = \chi_j(\sigma_2(x_j), \sigma_1) = \chi_j\big(\chi_j(x_j, \sigma_2),\sigma_1   \big),
\end{align*}
taking $j\to \infty$ and by the definition of $\chi_\infty$ we have
$$\chi_\infty(x,\sigma_1\sigma_2) = \chi_\infty\big(\chi_\infty(x,\sigma_2),\sigma_1 \big).$$
\end{proof}

\begin{comment}%%%%%%
\begin{proof}%[Proof of Lemma \ref{product lemma}]
For the fixed $x\in X_\infty$. Choose $R>0$ large enough such that $x\in B_{g_\infty}(p_\infty,R/2)$, by the definition of $\chi_j^R$, we have
\begin{align*}
\chi_j^R(x, \sigma_1\sigma_2)& = \phi_j^{-1}\circ(\sigma_1\sigma_2)\circ \phi_j(x)\\
& = \phi_j^{-1}\circ \sigma_1\circ \phi_j\bk{ \phi_j^{-1}\circ \sigma_2 \circ \phi_j(x)} \\
& = \chi_j^R\bk{ \phi_j^{-1}\circ \sigma_2 \circ \phi_j(x), \sigma_1 }\\
& = \chi_j^R (\chi_j^R(x, \sigma_2),\sigma_1),
\end{align*}
letting $j\to\infty, R\to\infty$ (more precisely, along the subsequence when defining the limit map), we have
$$\chi_\infty(x,\sigma_1\sigma_2) = \chi_\infty(\chi_\infty(x,\sigma_2),\sigma_1),$$ as desired.

\end{proof}
\end{comment}%%%%%
\begin{remark}
If we define the ``action'' of $\sigma\in U(n)$ on $X_\infty$, $\sigma: X_\infty\to X_\infty$ by $\sigma\cdot x = \chi_\infty(x,\sigma)$, then Lemma \ref{product lemma} means that for any $\sigma_1,\sigma_2\in U(n)$, $(\sigma_1\sigma_2)\cdot x = \sigma_1\cdot(\sigma_2\cdot x)$, for any $x\in X_\infty$.

\end{remark}

It is clear that the identity element $e\in U(n)$ satisfies $\chi_\infty(x, e) = x$, i.e., $e\cdot x = x$ for any $x\in X_\infty$. Hence the $U(n)$-action on $X_\infty$ defined above is a group action.

\begin{comment}%%%%%
\begin{lemma} The  $U(n)$-action on $X_\infty$ is effective, i.e., if $\sigma\in U(n)$ satisfies $\sigma(x) = x$ for all $x\in X_\infty$, then $\sigma = e\in U(n)$.
\end{lemma}
\begin{proof} On $\partial B_{g_j}(E_j,R)$, by Lemma \ref{estimate lemma} and the expression of $g_j$ in \eqref{gj local}
$$g_j\ge \frac{c(n,R)}{\abs{z}} \omega_{\mathbb C^n},$$ therefore for any $x\in \partial B_{g_j}(E_j,R)$ $$d_{g_j}(\sigma(x), x)\ge \frac{c(n,R)}{|x|} d_{\mathbb C^n}(\sigma(x),x)=c(n,R)d_{\mathbb C^n}\Big(\sigma(\frac{x}{|x|}), \frac{x}{|x|}\Big),$$ and there exists $x_j\in \phi_j(\partial B_{g_\infty}(p_\infty,R))$ such that $$d_{g_j}(\sigma(x_j),x_j)\ge \frac{c(n,R)}{2} d_0(\sigma,1),$$
letting $j\to\infty$ and taking a subsequence of $\phi_j^{-1}(x_j)\to x_\infty\in \partial B_\infty(p_\infty,R)$, we have
$$0=d_\infty(\sigma(x_\infty),x_\infty)\ge \frac{c(n,R)}{2} d_0(\sigma,1)$$ which implies $\sigma = e$, and the $U(n)$-action on $X_\infty$ is effective.
\end{proof}
\end{comment}

%%%%%%%%%%%%%%subsection 4.2
\subsection{$U(n)$-action and fiber map $F_\infty$}Recall in Section \ref{subsection 3.2}, we define a holomorphic map $F_\infty: X_\infty \to \cp^{n-1}$, as the limit map of $F_j: X_j\to \cp^{n-1}$. It is clear that $F_j$ is $U(n)$-equivariant with respect to the $U(n)$-action on $X_j = \cp^n\# \overline{\cp^n}$ and the standard action on $\cp^{n-1}$, i.e. 
$$F_j(\sigma\cdot x_j) = \sigma\cdot F_j(x_j),\quad \forall x_j\in X_j~~\forall \sigma\in U(n).$$
Now for any $x\in X_\infty$, there is a sequence $x_j\in X_j$ converging to $x$, taking $j\to \infty$ and by the smooth convergence of $F_j$ to $F_\infty$, we have
$$F_\infty(\sigma\cdot x) = \sigma\cdot F_\infty(x),$$
i.e. $F_\infty$ is $U(n)$-equivariant. Hence for any $y\in \cp^{n-1}$, $\sigma\in U(n)$ maps the fiber $F_\infty^{-1}(y)$ to $F_\infty^{-1}(\sigma\cdot y)$.
\begin{lemma}
The restriction of $\sigma:X_\infty \to X_\infty$ to the fiber $F_\infty^{-1}(y)$
$$\sigma|_{F_\infty^{-1}(y)}: F_\infty^{-1}(y) \to F_\infty^{-1}(\sigma\cdot y)$$ is a biholomorphic map.
\end{lemma}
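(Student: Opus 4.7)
The plan is essentially to repackage previously established facts: $\sigma = \chi_\infty(\cdot,\sigma): X_\infty \to X_\infty$ is globally a biholomorphism, and $F_\infty$ is $U(n)$-equivariant, so the restriction to any fiber is automatically a biholomorphism between fibers.

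First I would observe that $\sigma: X_\infty \to X_\infty$ is a holomorphic isometry, since it was established that each $\chi_\infty(\cdot,\sigma)$ preserves $g_\infty$ and $J_\infty$. Applying the same construction to $\sigma^{-1} \in U(n)$ produces $\chi_\infty(\cdot, \sigma^{-1})$, and the group identity from Lemma \ref{product lemma} gives
\begin{equation*}
\chi_\infty(\chi_\infty(x,\sigma), \sigma^{-1}) = \chi_\infty(x, \sigma^{-1}\sigma) = \chi_\infty(x, e) = x,
\end{equation*}
and likewise with the roles reversed. Hence $\sigma:X_\infty \to X_\infty$ is a bijection with inverse $\sigma^{-1}$, and both are holomorphic, so $\sigma$ is a biholomorphism of $X_\infty$.

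Next I would use $U(n)$-equivariance of $F_\infty$, already verified just above the lemma: $F_\infty(\sigma\cdot x) = \sigma \cdot F_\infty(x)$. For any $y \in \mathbb{CP}^{n-1}$ this immediately gives $\sigma(F_\infty^{-1}(y)) \subset F_\infty^{-1}(\sigma\cdot y)$, and applying the same to $\sigma^{-1}$ together with the equivariance yields $\sigma^{-1}(F_\infty^{-1}(\sigma\cdot y)) \subset F_\infty^{-1}(y)$, so the two inclusions combine to the equality $\sigma(F_\infty^{-1}(y)) = F_\infty^{-1}(\sigma\cdot y)$. Thus $\sigma|_{F_\infty^{-1}(y)}$ maps onto $F_\infty^{-1}(\sigma\cdot y)$ and its set-theoretic inverse is $\sigma^{-1}|_{F_\infty^{-1}(\sigma\cdot y)}$.

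Finally, since $\sigma$ is globally $J_\infty$-holomorphic and the fibers $F_\infty^{-1}(y)$, $F_\infty^{-1}(\sigma\cdot y)$ are complex submanifolds (these are smooth Riemann surfaces by the full-rank property of $dF_\infty$ established in Section \ref{subsection 3.2}), the restriction $\sigma|_{F_\infty^{-1}(y)}$ is holomorphic between them, and the same reasoning shows its inverse $\sigma^{-1}|_{F_\infty^{-1}(\sigma\cdot y)}$ is holomorphic. Hence $\sigma|_{F_\infty^{-1}(y)}$ is a biholomorphism. There is no genuine obstacle here; the only thing to be careful about is to ensure that the inverse map constructed from $\chi_\infty(\cdot,\sigma^{-1})$ really is a two-sided inverse, which follows immediately from the group property in Lemma \ref{product lemma}.
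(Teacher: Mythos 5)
Your proof is correct and follows essentially the same route as the paper: the paper's own (very terse) proof likewise invokes the group identities $\sigma\sigma^{-1}=\sigma^{-1}\sigma=e$ together with the holomorphicity of $\sigma$ and $\sigma^{-1}$, which is exactly what you spell out via Lemma \ref{product lemma} and the $U(n)$-equivariance of $F_\infty$. Your version just makes explicit the intermediate steps (two-sided inverse, fiber-to-fiber surjectivity, restriction of a holomorphic map to complex submanifolds) that the paper leaves implicit.
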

\begin{proof}
This follows from the fact that $$\sigma\sigma^{-1}  = e =id: F_\infty^{-1}(\sigma\cdot y)\to F_\infty^{-1}(\sigma\cdot y),  $$ 
and $$\sigma^{-1}\sigma = e = id : F_\infty^{-1}(y)\to F_\infty^{-1}(y).$$
And both $\sigma$ and $\sigma^{-1}$ are holomorphic maps.
\end{proof}

\begin{corr}\label{corollary 4.1}
The fibers of $f_\infty: X_\infty \to \cp^{n-1}$ are all biholomorphic to each other.
\end{corr}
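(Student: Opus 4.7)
The plan is to deduce the corollary immediately from the preceding lemma together with the transitivity of the standard $U(n)$-action on $\cp^{n-1}$. The key input is the $U(n)$-equivariance $F_\infty(\sigma\cdot x) = \sigma\cdot F_\infty(x)$ established just before the lemma, which guarantees that the $U(n)$-action on $X_\infty$ constructed in Section \ref{section 4} descends through $F_\infty$ to the standard action on the base $\cp^{n-1}$.

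First, I would recall that the standard $U(n)$-action on $\cp^{n-1}$ is transitive. Hence, given any two points $y_1, y_2\in \cp^{n-1}$, one can pick $\sigma\in U(n)$ with $\sigma\cdot y_1 = y_2$. Next, I would invoke the previous lemma with this $\sigma$ and $y=y_1$, which states that the restriction
$$\sigma|_{F_\infty^{-1}(y_1)}: F_\infty^{-1}(y_1)\to F_\infty^{-1}(\sigma\cdot y_1)=F_\infty^{-1}(y_2)$$
is a biholomorphism between the two fibers. This directly yields that all fibers of $F_\infty$ are biholomorphic to a common model Riemann surface.

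There is essentially no obstacle here: the entire content is encoded in the equivariance of $F_\infty$ and the transitivity of $U(n)$ on $\cp^{n-1}$, both already in place. The only minor point to make explicit, for clarity, is that the transitivity of the standard $U(n)$-action on $\cp^{n-1}$ is a classical fact (any unit vector in $\mathbb C^n$ can be rotated to any other by a unitary transformation), so no further argument is required beyond citing it.
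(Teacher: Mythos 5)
Your proposal is correct and is essentially identical to the paper's argument: the paper also deduces the corollary directly from the preceding lemma together with the transitivity of the $U(n)$-action on $\cp^{n-1}$. Nothing further is needed.
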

This follows from the previous lemma and the fact that $U(n)$ action on $\cp^{n-1}$ is transitive.

Fix  $p=[1:0:\cdots:0]\in\mathbb {CP}^{n-1}$, and denote the fiber $F_\infty^{-1}(p)$ by $ F_p$. We know from Corollary \ref{corollary 4.1} all fibers of $F_\infty$ are isomorphic. It is expected that $F_\infty$ is in fact a fiber bundle over $\cp^{n-1}$ with fiber $F_p$.

\begin{prop}\label{fiber prop}
The map $F_\infty: X_\infty \to \mathbb{CP}^{n-1}$ is a fiber bundle with fibers isomorphic to $F_p$.
\end{prop}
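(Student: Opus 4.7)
The plan is to manufacture local trivializations of $F_\infty$ using the transitive $U(n)$-action on $\cp^{n-1}$ together with the $U(n)$-equivariance of $F_\infty$. Let $H\subset U(n)$ denote the stabilizer of the basepoint $p=[1:0:\cdots:0]$, so that $\cp^{n-1}\cong U(n)/H$ and the quotient map $\pi:U(n)\to\cp^{n-1}$ is a principal $H$-bundle. Pick a smooth local section $s:V\to U(n)$ on a neighborhood $V\subset\cp^{n-1}$ of $p$ with $s(p)=e$ and $\pi\circ s=\mathrm{id}_V$.

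Define
$$\Phi_V:V\times F_p\longrightarrow F_\infty^{-1}(V),\qquad \Phi_V(y,x)=\chi_\infty(x,s(y))=s(y)\cdot x.$$
Using the $U(n)$-equivariance of $F_\infty$ established just above, $F_\infty(s(y)\cdot x)=s(y)\cdot F_\infty(x)=s(y)\cdot p=y$, so $\Phi_V$ is compatible with the projection to $V$. Conversely, for $z\in F_\infty^{-1}(V)$ put $y=F_\infty(z)\in V$; then $F_\infty(s(y)^{-1}\cdot z)=s(y)^{-1}\cdot y=p$, so $s(y)^{-1}\cdot z\in F_p$. Hence $\Phi_V$ is bijective with inverse
$$\Psi_V:F_\infty^{-1}(V)\to V\times F_p,\qquad \Psi_V(z)=\bigl(F_\infty(z),\,s(F_\infty(z))^{-1}\cdot z\bigr).$$
The joint continuity of $\chi_\infty:X_\infty\times U(n)\to X_\infty$, already established via the equi-continuity argument leading to \eqref{limit map}, combined with smoothness of $s$, shows that both $\Phi_V$ and $\Psi_V$ are continuous. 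Because each $\chi_\infty(\cdot,\sigma)$ is a diffeomorphism depending continuously on $\sigma$, $\Phi_V$ is in fact a homeomorphism (and indeed a diffeomorphism of smooth manifolds).

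To trivialize near any other $y'\in\cp^{n-1}$, choose $\sigma_0\in U(n)$ with $\sigma_0\cdot p=y'$. Then $\sigma_0 V$ is an open neighborhood of $y'$, and the biholomorphism $\sigma_0:X_\infty\to X_\infty$ restricts to a biholomorphism of fibers $F_p\to F_{y'}$ as in Corollary \ref{corollary 4.1}. Composing $\Phi_V$ with the actions of $\sigma_0$ on base and total space yields a local trivialization over $\sigma_0 V$ with typical fiber $F_p$. Compactness of $\cp^{n-1}$ supplies a finite cover, proving $F_\infty$ is a locally trivial fiber bundle with fiber $F_p$.

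The only real technical point is the joint continuity of the $U(n)$-action on $X_\infty$, which a priori might be threatened by the subsequential nature of the Cheeger-Gromov limit defining $\chi_\infty$ and the lack of compactness of the fibers. However, this has essentially been handled in \eqref{limit map}: the maps $\chi_j$ are equi-Lipschitz jointly in $(x,\sigma)$ on each product $B_{g_j}(E_j,R)\times U(n)$ by Lemma \ref{lemma 4.2}, so their Arzela-Ascoli limit $\chi_\infty$ is jointly continuous, and smoothness in $\sigma$ follows a posteriori from the fact that each $\chi_\infty(\cdot,\sigma)$ is a holomorphic isometry with the family varying continuously in $\sigma$. No additional estimate is needed beyond what the preceding subsections supply.
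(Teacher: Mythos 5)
Your argument is correct and establishes the statement, but it takes a genuinely different (and slightly weaker) route than the paper. You trivialize $F_\infty$ using local sections of the compact principal bundle $U(n)\to U(n)/H\cong\cp^{n-1}$, whereas the paper first complexifies the action: it extends the $SU(n)$-action on $X_\infty$ to an $SL(n,\mathbb C)$-action via the infinitesimal recipe $\exp(\xi+\sqrt{-1}\eta)\cdot x=\exp(\xi)\exp(J_\infty\eta)\cdot x$, identifies $X_\infty$ with $SL(n,\mathbb C)\times F_p/_\sim$ where $\sim$ is induced by the Borel (isotropy) subgroup $B$, and then trivializes using local \emph{holomorphic} sections of the principal $B$-bundle $SL(n,\mathbb C)\to SL(n,\mathbb C)/B\cong\cp^{n-1}$. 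The mechanism is the same in both cases --- transitivity on the base plus equivariance of $F_\infty$ plus a local section of the group over the base --- but the payoffs differ. Your section $s:V\to U(n)$ cannot be holomorphic, so $\Phi_V(y,x)=s(y)\cdot x$ is only smooth in $y$ (and holomorphic fiberwise); you obtain a smooth fiber bundle with fibers biholomorphic to $F_p$. The paper's $SL(n,\mathbb C)$-sections are holomorphic, so its trivializations are biholomorphisms and the bundle is a \emph{holomorphic} fiber bundle. This distinction is not cosmetic for what follows: the next step embeds $X_\infty$ into a holomorphic line bundle $\mathcal O_{\cp^{n-1}}(k)$ and computes $k=-1$ via Lemma \ref{effective lemma}, and for that one needs holomorphic transition functions; smooth triviality alone would not directly identify the holomorphic structure of the bundle (Fischer--Grauert is unavailable here since the fibers are noncompact). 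So if you stay with $U(n)$, you should add the complexification step before the line-bundle identification. Two smaller points: your claim that $\Phi_V$ is a diffeomorphism needs the action map to be smooth jointly in $(x,\sigma)$, which the paper only establishes as continuity via Lemma \ref{lemma 4.2} and Arzela--Ascoli in \eqref{limit map}; your a posteriori appeal to the fact that a continuous homomorphism into $\mathrm{Isom}(X_\infty,g_\infty)$ is automatically smooth (Myers--Steenrod) does close this, but it deserves to be said explicitly. The appeal to compactness of $\cp^{n-1}$ for a finite cover is unnecessary for local triviality.
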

\begin{proof}
The compact group $SU(n)$-action on $X_\infty$ induces an action of the complexified group $SL(n,\mathbb C)$ of $SU(n)$, which is defined through the infinitesimal action: for any $\xi + \sqrt{-1} \eta\in \mathfrak{su}(n)\oplus \sqrt{-1} \mathfrak{su}(n) = \mathfrak{sl}(n,\mathbb C)$, we define $\exp(\xi + \sqrt{-1}\eta)\cdot x = \exp(\xi)\exp(J_\infty\eta)\cdot x$, where $J_\infty$ is the complex structure on $X_\infty$.

Define a map $$\pi: SL(n,\mathbb C)\times F_p\to X_\infty,\quad (\sigma, x)\mapsto \sigma\cdot x.$$
This is indeed a {\em surjective} map by the property of group actions. If $\pi(\sigma_1,x_1) = \pi(\sigma_2,x_2)$ for some $\sigma_1,\sigma_2\in SL(n,\mathbb C)$ and $x_1,x_2\in F_p$. Then $\sigma_1\cdot x_1 = \sigma_2 \cdot x_2$,  and $\sigma_1\cdot p = \sigma_1\cdot F_\infty(x_1)=F_\infty(\sigma_1\cdot x_1) = F_\infty(\sigma_2\cdot x_2) = \sigma_2 \cdot p$, therefore, $\sigma_1^{-1}\circ \sigma_2 \in $ isotropic subgroup $B$ of $SL(n,\mathbb C)$ acting on $\mathbb {CP}^{n-1}$, which is given by the matrices of the form
\begin{equation*}
\sigma_1^{-1}\circ\sigma_2 =   \begin{pmatrix}
a& \mathbb{*}\\
\mathbf{0} & A
\end{pmatrix}
\end{equation*}
where $a\in\mathbb C^*$ and $A\in GL(n-1,\mathbb C)$ such that $a\det A = 1$ and $\mathbb{*}$ denotes a vector in $\mathbb C^{n-1}$. Hence we have $x_1 =\begin{pmatrix}
a& \mathbb{*}\\
\mathbf{0} & A
\end{pmatrix}\cdot x_2 $.

We define an equivalence relation on $SL(n,\mathbb C)\times F_p$ as  $$(\sigma_1,x_1)\sim (\sigma_2,x_2)$$ if there exists a matrix $\begin{pmatrix}
a& \mathbb{*}\\
\mathbf{0} & A
\end{pmatrix}\in B$ such that $\sigma_2 = \sigma_1\circ \begin{pmatrix}
a& \mathbb{*}\\
\mathbf{0} & A
\end{pmatrix}$ and $x_2 = \begin{pmatrix}
a& \mathbb{*}\\
\mathbf{0} & A
\end{pmatrix}^{-1}\cdot x_1$. Then we can see that if $(\sigma_1,x_1)\sim (\sigma_2,x_2)$, then $\pi(\sigma_1,x_1) = \pi(\sigma_2,x_2)$. Hence the quotient map
$$\bar \pi: SL(n,\mathbb C)\times F_p/ _\sim\to X_\infty$$ is bijective and also a biholomorphic map, since each action $\sigma\in SL(n,\mathbb C)$ on $X_\infty$ is holomorphic and $SL(n,\mathbb C)$ is a complex manifold.  

%This gives the fiber bundle structure of $X_\infty$.

\noindent {\bf Claim}: $SL(n,\mathbb C)\times F_p/_\sim$ is a fiber bundle over $\cp^{n-1}$ with fibers isomorphic to $F_p$.

\noindent{\em Proof of the {\bf Claim}:} Define the projection map $pr: SL(n,\mathbb C)\times F_p/_\sim\to SL(n,\mathbb C)/B\cong \cp^{n-1}$, by $pr(\sigma, x) = Q(\sigma)$, where $Q: SL(n,\mathbb C)\to SL(n,\mathbb C)/B$ is the quotient map. $pr$ is clearly well-defined and we want to show $pr$ is locally trivial. The principal $B$-bundle $Q$ is locally trivial, so around any point in $\cp^{n-1}\cong SL(n,\mathbb C)/B$, there is an open set $U$ such that $Q^{-1}(U)\cong U\times B$, i.e. there is a local trivialization $\varphi: Q^{-1}(U)\to U\times B$, and we denote $\varphi = (\varphi_1,\varphi_2)$. By the definition of quotient map $Q$, it is clear that $\varphi_1(\sigma\mathbf{b}) = \varphi_1(\sigma)$ for any $\sigma\in Q^{-1}(U)$ and $\mathbf{b}\in B$. Thus we can define a local section $s:U\to Q^{-1}(U)$ of $Q$ by $s(y) = \varphi^{-1}(y,\mathbf{e})$ with $\mathbf{e}\in B$ being the identity matrix.  

Define a map $\tilde \varphi: pr^{-1}(U) = Q^{-1}(U)\times F_p/_\sim\to U\times F_p$ by $$\tilde\varphi (\sigma,x) = (\varphi_1(\sigma), s(\varphi_1(\sigma))^{-1}\cdot \sigma\cdot x)$$ which by the property of $\varphi_1$ is clearly well-defined. We want to show $\tilde \varphi$ is bijective. $\tilde \varphi$ is clearly surjective. To see that it is also injective, suppose $\tilde\varphi(\sigma_1,x_1) = \tilde\varphi(\sigma_2,x_2)$, then $\varphi_1(\sigma_1) = \varphi_1(\sigma_2)$, so there exists a matrix $\mathbf{b}\in B$ such that $\sigma_2 = \sigma_1 \mathbf{b}$. Since $s(\varphi_1(\sigma_1))^{-1}: F_{\sigma_1\cdot p}\to F_p$ is an isomorphism, we must have $\sigma_1\cdot x_1 = \sigma_2\cdot x_2$, and this implies $x_2 = \mathbf{b}^{-1}\cdot x_1$, and hence $(\sigma_1,x_1)\sim (\sigma_2,x_2)$, and the map $\tilde \varphi$ is injective. In the definition of $\tilde \varphi$, all maps are holomorphic hence $\tilde \varphi$ is also  holomorphic, and $\tilde\varphi$ provides the local trivialization of $SL(n,\mathbb C)\times F_p/_\sim$ over $U\subset \cp^{n-1}$.
\end{proof}

For the fixed point $p=[1:0:\cdots:0]\in \cp^{n-1}$, it is well-known that the isotropic subgroup $U_p$ at $p$ of the $U(n)$-action on $\cp^{n-1}$ is isomorphic to $U(1)\times U(n-1)$ and  given by the the matrices of the form
$$\begin{pmatrix} e^{i\theta} & 0 \\ 0 & A\end{pmatrix},\quad \text{for some }A\in U(n-1),~~e^{i\theta}\in U(1).$$
Each $\sigma\in U_p$ induces an isomorphism of the fiber $F_p$, which is either $\mathbb C$ or the unit disk $D\subset \mathbb C$. 
\begin{lemma}\label{effective lemma}
There exists an $x_0\in F_p$ such that for any $\sigma\in U_p$, $\sigma\cdot x_0 = x_0$. Moreover, if $\sigma\in U_p$ fixes all $x\in F_p$, then $\sigma\in \{1\}\times U(n-1)$, i.e., $\sigma$ is of the form
$$\begin{pmatrix}1 & 0 \\ 0 & A  \end{pmatrix}, \quad \text{for some }A\in U(n-1).$$

\end{lemma}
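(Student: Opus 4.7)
The plan is to produce the fixed point $x_0$ as a Cheeger--Gromov limit of the natural $U_p$-fixed points $\tilde p_j := E_j\cap F_j^{-1}(p)$, and then to constrain the $U(1)$-factor of $U_p$ via the linearization at $x_0$.

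I will first check that $\tilde p_j$ is a single point fixed by $U_p$. Under the identification $E_j\cong \cp^{n-1}$, the fiber $F_j^{-1}(p)$ meets $E_j$ at exactly the point corresponding to $p=[1:0:\cdots:0]$, since $F_j^{-1}(p)$ is the closure of the $z_1$-axis. Because $U_p$ is defined as the isotropy subgroup of $p$ in $\cp^{n-1}$ and preserves both $E_j$ and $F_j^{-1}(p)$ setwise, it fixes $\tilde p_j$. By Lemma \ref{diameter bound} the $\tilde p_j$ stay in the $g_j$-ball of radius $D_n$ about $p_j$, so after passing to a subsequence $\phi_j^{-1}(\tilde p_j)\to x_0\in X_\infty$. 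Continuity of $F_\infty$ gives $x_0\in F_p$, and applying the locally uniform convergence $\chi_j\to\chi_\infty$ to the identity $\chi_j(\tilde p_j,\sigma)=\tilde p_j$ yields $\chi_\infty(x_0,\sigma)=x_0$ for every $\sigma\in U_p$.

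For the second statement, I will decompose any $\sigma\in U_p$ as a product $\sigma=\sigma_\theta\cdot A$ with $\sigma_\theta=\mathrm{diag}(e^{i\theta},1,\ldots,1)$ and $A=\mathrm{diag}(1,B)$, $B\in U(n-1)$. On $X_j$ the fiber $F_j^{-1}(p)$ sits inside $\{z_2=\cdots=z_n=0\}$, where $A$ acts trivially and $\sigma_\theta$ acts as $z_1\mapsto e^{i\theta}z_1$. Taking the Cheeger--Gromov limit gives that $A$ acts as the identity on $F_p$, so if $\sigma|_{F_p}=\mathrm{id}$ then also $\sigma_\theta|_{F_p}=\mathrm{id}$, and the task reduces to showing $e^{i\theta}=1$.

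To finish, I will linearize at $x_0$. Using the $C^{1,\alpha}$-convergence $\chi_{j,\sigma_\theta}\to\chi_{\infty,\sigma_\theta}$ established earlier in this section together with $\phi_j^{-1}(\tilde p_j)\to x_0$, the derivative $(d\chi_{\infty,\sigma_\theta})_{x_0}$ is the limit of $(d\sigma_\theta)_{\tilde p_j}$ after conjugation by $d\phi_j$. The smooth convergence $F_j\to F_\infty$ combined with the fact that $dF_j$ has constant rank $n-1$ (Lemma \ref{C1 estimate}) implies $\ker(dF_j)_{\tilde p_j}$ pulls back to $T_{x_0}F_p$ in the limit; on this subspace $(d\sigma_\theta)_{\tilde p_j}$ is multiplication by $e^{i\theta}$, hence so is $(d\chi_{\infty,\sigma_\theta})_{x_0}|_{T_{x_0}F_p}$. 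If $\chi_{\infty,\sigma_\theta}$ restricts to the identity on $F_p$ then this linearization must equal the identity, forcing $e^{i\theta}=1$ and thus $\sigma=A\in\{1\}\times U(n-1)$. The main technical point I expect to handle carefully is the passage of the fiber tangent subspaces through the Cheeger--Gromov convergence; once that is in place, the rest reduces to continuity and the explicit group-theoretic decomposition of $U_p$.
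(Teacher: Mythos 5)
Your proof is correct. The first half --- producing $x_0$ as a limit of the $U_p$-fixed points $E_j\cap F_j^{-1}(p)$ and passing the identity $\sigma\cdot x_{0,j}=x_{0,j}$ through the Cheeger--Gromov convergence --- is exactly the paper's argument. For the second half you take a genuinely different route. The paper argues metrically: using Lemma \ref{estimate lemma} it gets the lower bound $g_j\ge c(n,R)\,\omega_{\mathbb C^n}/|z|^2$ on an annulus about $\partial B_{g_j}(E_j,R)$, and since that cone metric splits as $(d\log|z|)^2+g_{S^{2n-1}}$, the vanishing of $d_{g_j}(\sigma\cdot x_j,x_j)$ in the limit forces the spherical displacement of the rotation by $\theta$ on the circle $F_j^{-1}(p)\cap S^{2n-1}$ to vanish, hence $\theta=0$. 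You instead linearize at the fixed point: $(d\sigma)_{\tilde p_j}$ preserves the fiber line $\ker(dF_j)_{\tilde p_j}$ and acts on it as multiplication by $e^{i\theta}$, so the $C^{1}$-convergence of $\chi_{j,\sigma}$ together with the convergence of the fiber tangent lines (which follows from the $C^{1,\alpha}$ convergence of $F_j$ and the uniform lower bound on the nonzero eigenvalues of $\omega_j^{-1}F_j^*\omega_{FS}$ from Lemma \ref{C1 estimate}) shows $(d\chi_{\infty,\sigma})_{x_0}|_{T_{x_0}F_p}$ is multiplication by $e^{i\theta}$ with respect to $J_\infty$, which is the identity only if $e^{i\theta}=1$. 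Two small remarks: the eigenvalue computation at $\tilde p_j$ gives $e^{i\theta}$ on the $\zeta$-direction for \emph{any} $\sigma=\mathrm{diag}(e^{i\theta},B)\in U_p$, so your decomposition $\sigma=\sigma_\theta\cdot A$ --- and with it the claim that $A$ acts trivially on $F_p$, which requires the set-convergence of the fibers $F_j^{-1}(p)\to F_p$ --- can be dispensed with entirely; and the continuity-of-kernels point you flag is indeed the only technical input beyond what the paper already establishes. Your approach is more local (it uses data only at $x_0$ and avoids the lower metric bound \eqref{local estimate 10} and the product structure of the cone metric), while the paper's avoids any discussion of convergence of tangent subspaces by working purely with distances. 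Both are complete.
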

\begin{proof}
It is clear that $\sigma\in U_p$ also induces an isomorphism of the fibers $F_j^{-1}(p)$. For each $j$, there exists an $x_{0,j} \in F_j^{-1}(p)\cap E_j$ which is fixed by all $\sigma\in U_p$.  Therefore we can assume that $x_{0,j}$ converges to $x_0$ as $j\to\infty$. We then have that   $x_0\in F_\infty^{-1}(p) = F_p$ is the fixed point of all $\sigma\in U_p$.

Suppose there exists a $\sigma\in U_p$ such that $\sigma\cdot x = x$ for all $x\in F_p$. Fix a large $R>0$ and  for any $x_j\in F_j^{-1}(p)\cap B_{g_j}(E_j,R)$ with $x_j\to x_\infty\in F_p$, $d_{g_j}(\sigma\cdot x_j,x_j)\le \epsilon_j\to 0$ as $j\to \infty$, since $d_{g_j}(\sigma\cdot x_j,x_j)\to d_{g_\infty}(\sigma\cdot x_\infty, x_\infty) = 0$. On $\partial B_{g_j}(E_j,R)$, by Lemma \ref{estimate lemma} and the expansion formula of $g_j$ in \eqref{gj local} there exists a constant $c(n,R)>0$ such that
\begin{equation}\label{local estimate 10}g_j\ge c(n,R)\frac{\omega_{\mathbb C^n}}{\abs{z}}.\end{equation}
For any $x_j\in F_j^{-1}(p)\cap \partial B_{g_j}(E_j,R)$, the minimal geodesic $\gamma_j$ (with respect to $g_j$) connecting $x_j$ and $\sigma\cdot x_j$ must be contained in the annulus $B_{g_j}(E_j, R+\epsilon_j)\backslash B_{g_j}(E_j, R -\epsilon_j)\subset \mathbb C^n\backslash \{0\}$, where the estimate \eqref{local estimate 10} still holds with some different $c(n,R)>0$, therefore we have
\begin{equation}\label{eqn:effective}\begin{split}
\epsilon_j\ge d_{g_j}(\sigma\cdot x_j,x_j) & = L_{g_j}(\gamma_j)\\
&\ge c(n,R) d_{\frac{\omega_{\mathbb C^n}}{\abs{z}}}(\gamma_j)\\
&\ge c(n,R)d_{g_{S^{2n-1}}} \bk{ \sigma(\frac{x_j}{|x_j|}), \frac{x_j}{|x_j|} },
\end{split}\end{equation} 
where $g_{S^{2n-1}}$ is the standard metric on the  unit sphere $S^{2n-1}\subset\mathbb C^n\cong\mathbb R^{2n}$ and we use the fact that the metric
\begin{equation*}
\frac{\omega_{\mathbb C^n}}{\abs{z}} = (d\log |z|)^2 + g_{S^{2n-1}},
\end{equation*}
is a product metric on $\mathbb C^n\backslash\{0\}$, so the distance of $x_j$ and $\sigma\cdot x_j\in \mathbb C^n\backslash \{0\}$ with respect to $\frac{\omega_{\mathbb C^n}}{\abs{z}}$ is equal to $d_{g_{S^{2n-1}}}\bk{ \sigma(\frac{x_j}{|x_j|}), \frac{x_j}{|x_j|} }$, since the Euclidean norms $|x_j| = |\sigma\cdot  x_j|$. Suppose $\sigma\in U_p\subset U(n)$ is given by $\begin{pmatrix} e^{i\theta} & 0 \\ 0& A\end{pmatrix}$ for some $A\in U(n-1)$, and it acts on the big circle $F_j^{-1}(p)\cap S^{2n-1}$ by rotation by angle $\theta$. Then \eqref{eqn:effective} means that for any  $x\in F_j^{-1}(p)\cap S^{2n-1}$, $d_{g_{S^{2n-1}}}(\sigma\cdot x, x)$ is arbitrarily small, hence equals to zero, so the rotation angle $\theta = 0$, and $\sigma\in U_p$ is of the form $\begin{pmatrix} 1 & 0 \\ 0& A\end{pmatrix}$ for some $A\in U(n-1)$.
\end{proof}

\begin{remark}
Noting that the automorphism groups of $D$ and $\mathbb C$ are given by
$$\mathrm{Aut}(D) = \Big\{f_{a,\theta}| f_{a,\theta}(\zeta) = e^{i\theta}\frac{\zeta-a}{1-\bar a \zeta},\quad \theta\in S^1,~a\in D\Big\},$$
$$\mathrm{Aut}(\mathbb C) = \Big\{a\zeta + b| a,b\in\mathbb C,~a\neq 0\Big\},$$
respectively. The action of each nonidentity $\sigma\in U_p$ on $F_p$ is of one of the above, hence has one and only one fixed point  in $F_p$.
\end{remark}
We know holomorphic line bundles over $\cp^{n-1}$ are given by $\mathcal O_{\cp^{n-1}}(k)$ for some $k\in\mathbb Z$. And each  fiber $F_p$ can be embedded in the complex line $\mathbb C$ with the fixed point $x_0$ identified as $0\in\mathbb C$ hence the fiber bundle $F_\infty: X_\infty\to \cp^{n-1}$ can be embedded into some line bundle $\mathcal O_{\cp^{n-1}}(k)$, so that  $X_\infty$ is either the line bundle $\mathcal O_{\cp^{n-1}}(k)$ or the disk bundle as a portion of $\mathcal O_{\cp^{n-1}}(k)$.
\begin{lemma}
We have $k = -1$.
\end{lemma}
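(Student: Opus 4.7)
The plan is to compute $k$ by pinning down the character of the isotropy subgroup $U_p = U(1)\times U(n-1) \subset U(n)$ acting on the fiber of the limiting line bundle $\mathcal{O}(k)$ over $p = [1:0:\cdots:0]$, and then reading off the Chern class. The key input is that for each finite $j$, a neighborhood of $E_j$ in $X_j$ is biholomorphic (in a $U(n)$-equivariant way) to a neighborhood of the zero section of $\mathcal{O}_{\cp^{n-1}}(-1)$ with its tautological $U(n)$-equivariant structure.

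First I would recall the classical identification: since $X = \cp^n\#\overline{\cp^n}$ is the blow-up of $\cp^n$ at the origin of the affine chart, the blow-down map sends the total space of $\mathcal{O}_{\cp^{n-1}}(-1)$ (as a submanifold of $\mathbb{C}^n \times \cp^{n-1}$) onto a neighborhood of $E_j = D_0$, and the standard $U(n)$-action on $\mathbb{C}^n$ lifts to the natural (tautological) $U(n)$-action on $\mathcal{O}(-1)$. Under this identification, the point $x_{0,j} \in E_j \cap F_j^{-1}(p)$ corresponds to the origin of the fiber over $p$, and this fiber is $\mathbb{C}\cdot e_1 \subset \mathbb{C}^n$. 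For $\sigma = \mathrm{diag}(e^{i\theta}, A) \in U_p$ with $A \in U(n-1)$, one has $\sigma e_1 = e^{i\theta} e_1$, so $\sigma$ acts on $\mathcal{O}(-1)|_p$ (and hence on $T_{x_{0,j}} F_j^{-1}(p)$) by multiplication by $e^{i\theta}$. Moreover, any $\tau = \mathrm{diag}(1, A)$ fixes the line $\mathbb{C}\cdot e_1$ pointwise, hence fixes $F_j^{-1}(p)$ pointwise, so its derivative on $T_{x_{0,j}} F_j^{-1}(p)$ is the identity.

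Next I would pass these linearized actions to the CGH limit. The $C^2$ bounds on the maps $\chi_{j,\sigma}$ from Section \ref{section 4} give (locally uniform) $C^{1,\alpha}$ convergence $\chi_{j,\sigma} \to \chi_{\infty,\sigma}$, and since $x_{0,j} \to x_0$, the derivatives $(d\chi_{j,\sigma})_{x_{0,j}}$ restricted to the fiber direction converge to $(d\chi_{\infty,\sigma})_{x_0}$ restricted to $T_{x_0} F_p$. Hence $\sigma = \mathrm{diag}(e^{i\theta}, A) \in U_p$ acts on $T_{x_0} F_p$ by multiplication by $e^{i\theta}$, independent of $A$. Under the embedding of $X_\infty$ into the total space of $\mathcal{O}(k)$ that sends $x_0$ to the origin of the fiber over $p$ (as established at the end of the preceding discussion), the tangent space $T_{x_0} F_p$ is canonically identified with $\mathcal{O}(k)|_p$. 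Therefore the character of $U_p$ on this fiber is $\chi(e^{i\theta}, A) = e^{i\theta}$.

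Finally, I would compare with the classification of $U(n)$-equivariant line bundles on $\cp^{n-1} = U(n)/U_p$: any character of $U_p$ has the form $(e^{i\theta}, A) \mapsto e^{ib\theta}\det(A)^c$ for some $(b,c)\in\mathbb{Z}^2$, and the Chern class of the associated line bundle is $c - b$. This can be read off from two basic examples: $\mathcal{O}(-1)$ itself has character $(b,c) = (1,0)$ and Chern class $-1$, while the pullback of the character $\det$ of $U(n)$ gives the trivial bundle with character $(b,c)=(1,1)$ and Chern class $0$; additivity under tensor product yields the general formula. Our character corresponds to $(b,c)=(1,0)$, so $k = c-b = -1$. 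The main obstacle is step two, namely the rigorous passage of the linearized $U_p$-action to the CGH limit, which relies precisely on the $C^2$ bounds for $\chi_{j,\sigma}$ from Section \ref{section 4}; once this is in hand, the rest is a bookkeeping comparison of characters.
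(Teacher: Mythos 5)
Your proof is correct, but it takes a genuinely different route from the paper's. The paper argues in two steps: it first invokes the fact from Theorem \ref{Song theorem}(1) (i.e.\ \cite{Song1}) that $X_\infty$ is \emph{diffeomorphic} to $\mathbb{C}^n$ blown up at a point, which forces $k$ to be negative and odd, and then rules out $|k|\ge 2$ by noting that the matrix $\mathrm{diag}(e^{2\pi i/k},A)$ would act trivially on the fiber of $\mathcal{O}_{\cp^{n-1}}(k)$ over $p$ and hence on all of $F_p$, contradicting the effectiveness statement of Lemma \ref{effective lemma}. You instead compute the isotropy character of $U_p$ on $T_{x_0}F_p$ directly: it equals $e^{i\theta}$ for every finite $j$ by the tautological description of the blow-up, this eigenvalue passes to the limit by the $C^{1,\alpha}$ convergence of the maps $\chi_{j,\sigma}$ together with $x_{0,j}\to x_0$ and the convergence of the fiber directions $\ker dF_j\to\ker dF_\infty$, and the classification of $U(n)$-equivariant line bundles on $\cp^{n-1}=U(n)/U_p$ then gives $k=c-b=-1$ (your normalization via $\mathcal{O}(-1)$ and the $\det$-twist of the trivial bundle is the right one). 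Your route buys independence from the topological input that $X_\infty$ is diffeomorphic to $\widetilde{\mathbb{C}^n}$, replaces the global effectiveness argument (the sphere-distance estimate \eqref{eqn:effective}) by a purely infinitesimal statement at the fixed point, and determines the sign of $k$ without a separate step. The one point you should make explicit is that the embedding $X_\infty\hookrightarrow\mathcal{O}(k)$ identifies $T_{x_0}F_p$ with $\mathcal{O}(k)|_p$ \emph{equivariantly}; this does follow from the associated-bundle construction in Proposition \ref{fiber prop}, where the character of the isotropy group acting on $F_p$ at its fixed point is precisely the character defining the bundle, but since the paper's embedding into $\mathcal{O}(k)$ is itself only sketched, that identification deserves a sentence of its own.
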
 
\begin{proof}
We have known from Theorem \ref{Song theorem} (1) (see also \cite{Song1}) that $X_\infty$ is {\em diffeomorphic} to $\widetilde{ \mathbb C^n}$, $\mathbb C^n$ blown-up at one point, so $k$ must be negative and odd. On the other hand, if $k\neq -1$, then the $U_p$ actions on the fiber of $\mathcal O_{\cp^{n-1}}(k)$ over $p\in \cp^{n-1}$ are not ``effective'' in the sense that a matrix of the form $\begin{pmatrix} e^{2\pi i/k} & 0 \\ 0 & A  \end{pmatrix}$ inducing the identity action on the fiber of $\mathcal O_{\cp^{n-1}}(k)$ over $p\in \cp^{n-1}$, and inducing the identity action on $F_p$. This contradicts Lemma \ref{effective lemma}.
\end{proof}

\begin{corr}\label{main cor}
$X_\infty$ is either the holomorphic line bundle $\mathcal O_{\cp^{n-1}}(-1)$ or the holomorphic disk bundle as a portion of $\mathcal O_{\cp^{n-1}}(-1)$.
\end{corr}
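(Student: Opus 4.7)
The plan is to assemble the corollary directly from the structural results already in hand. By Proposition \ref{fiber prop}, the map $F_\infty: X_\infty \to \cp^{n-1}$ is a holomorphic fiber bundle whose fiber $F_p$ is biholomorphic either to $\mathbb{C}$ or to the unit disk $D\subset \mathbb{C}$. By Lemma \ref{effective lemma}, each fiber carries a distinguished basepoint $x_0$ fixed by the isotropy group $U_p$, and since $U(n)$ acts transitively on $\cp^{n-1}$ with $F_\infty$ being $U(n)$-equivariant, the points $x_0$ assemble into a global holomorphic ``zero section'' of the bundle. Using the standard biholomorphic identification of $F_p$ as a subset of $\mathbb{C}$ sending $x_0 \mapsto 0$, the bundle $F_\infty: X_\infty \to \cp^{n-1}$ therefore embeds fiberwise into a holomorphic line bundle $L \to \cp^{n-1}$.

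The next step is to identify $L$. By the standard classification of holomorphic line bundles on $\cp^{n-1}$, we have $L \cong \mathcal{O}_{\cp^{n-1}}(k)$ for some $k \in \mathbb{Z}$. The preceding lemma pins down $k$ by combining two constraints: Theorem \ref{Song theorem}(1) forces $X_\infty$ to be \emph{diffeomorphic} to $\widetilde{\mathbb{C}^n}$, which restricts $k$ to be odd and negative, while Lemma \ref{effective lemma} rules out any $|k|\geq 2$ because the corresponding $U_p$-action on the fiber of $\mathcal{O}_{\cp^{n-1}}(k)$ would admit a nontrivial element acting trivially on $F_p$. This leaves $k = -1$.

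Substituting $k=-1$ into the fiberwise embedding produces the dichotomy in the statement: if $F_p \cong \mathbb{C}$ the image of $X_\infty$ in $\mathcal{O}_{\cp^{n-1}}(-1)$ is the entire total space of the line bundle, whereas if $F_p \cong D$ the image is the open disk subbundle. In either case the biholomorphism type of $X_\infty$ is determined, which gives the corollary.

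There is no real new obstacle at this stage, since all the hard work, namely establishing the fiber bundle structure, building the $U(n)$-action on $X_\infty$, constructing the zero section, and pinning down the degree $k$, has already been carried out. The only point requiring verification is that the fiberwise identifications $F_{\sigma\cdot p}\hookrightarrow \mathbb{C}$ glue into a holomorphic bundle map with values in $\mathcal{O}_{\cp^{n-1}}(-1)$; this however follows from $U(n)$-equivariance together with the transitivity of the $U(n)$-action on $\cp^{n-1}$, so the corollary is essentially a packaging of the previous lemma.
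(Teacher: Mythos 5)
Your proposal is correct and follows essentially the same route as the paper: the corollary is just the packaging of the preceding discussion, namely the fiberwise embedding of $F_p$ into $\mathbb C$ via the $U_p$-fixed point $x_0$, the resulting embedding of the bundle into some $\mathcal O_{\cp^{n-1}}(k)$, and the lemma pinning down $k=-1$ from the diffeomorphism type of $X_\infty$ together with the effectiveness statement of Lemma \ref{effective lemma}. Nothing in your write-up deviates from the paper's argument in substance.
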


%%%%%%%%%%%%%%%%%%%%%%%%%%%%%%%%%%%%%%%%%%%%%%%%%%%%%%%%%%%%%%
%%%%%%%%% section 5
\section{Proof of Theorem \ref{main theorem}}\label{section 5}

%\vspace{5cm}
%%%%%%%%%%%%%%%%%%%%%%%

We first show that the limit metric $g_\infty$ on $X_\infty\subset \mathcal O_{\cp^{n-1}}(-1)$ is $U(n)$ invariant with respect to the natural coordinates of $\mathbb C^n\backslash\{0\}= \mathcal O_{\cp^{n-1}}(-1)\backslash E_\infty$.

\begin{lemma}
There exists a smooth function $U_\infty$ on $X_\infty$, such that 
$$g_\infty = (n-1)F_\infty^*\omega_{FS} + \ddbar U_\infty,$$ where $F_\infty:X_\infty\to \cp^{n-1}$ is the map constructed in Section \ref{section 3}, and $\omega_{FS}$ is the Fubini-Study metric on $\cp^{n-1}$.
\end{lemma}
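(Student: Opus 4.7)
The plan is to construct on each $X_j$ an explicit local Kähler potential $\varphi_j$ satisfying $\omega_j = (n-1)F_j^*\omega_{FS} + \ddbar \varphi_j$ on large neighborhoods of $E_j$, obtain uniform $C^\infty_{loc}$ estimates, and pass to the Cheeger-Gromov limit. Let $\tilde u_j(\rho) := u(\rho, t_j)/(T-t_j)$ be the rescaled Calabi potential, so $\omega_j = \ddbar \tilde u_j$ on $\mathbb C^n \setminus \{0\} \subset X_j$. Since $F_j^*\omega_{FS} = \ddbar \rho$ on this chart, one is forced to set
$$\varphi_j := \tilde u_j - (n-1)\rho.$$
Near $E_j$ (i.e.\ $\rho \to -\infty$), the Calabi condition together with $\tilde u_j'(-\infty) = a_{t_j}/(T-t_j) = n-1$ gives $\varphi_j = U_0(|z|^2)/(T-t_j)$, which extends smoothly across $E_j$. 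Since $B_{g_j}(E_j, R)$ is disjoint from $D_\infty$ for $j$ large (by Lemma \ref{estimate lemma}), $\varphi_j$ is smooth on every such ball, and since $\varphi_j$ is a function of $\rho$ alone, one may normalize it by $\varphi_j|_{E_j} \equiv 0$.

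For uniform $C^0, C^1$ bounds, Lemma \ref{gradient and Laplacian} applied to $\tilde u_j$ gives
$$|\nabla \varphi_j|_{g_j}^2 = \frac{(\tilde u_j' - (n-1))^2}{\tilde u_j''} = \frac{(u'-a_{t_j})^2}{(T-t_j)\,u''}.$$
The lower bound $u'' \ge C^{-1}(u'-a_t)(b_t-u')$ from Lemma \ref{bound of u prime}, combined with $(u'-a_{t_j})/(T-t_j) \le C(n,R)$ on $B_{g_j}(E_j, R)$ from Lemma \ref{estimate lemma}, and the observation that $b_t \ge b_0 - (n+1)T > 0$ under the non-collapsing hypothesis $a_0(n+1) < b_0(n-1)$ (while $u' \le C(n,R)(T-t_j) \to 0$), yields $|\nabla\varphi_j|_{g_j}^2 \le C(n,R)$. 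Integrating from $E_j$ along unit-speed geodesics and using $\varphi_j|_{E_j}=0$ gives $\sup_{B_{g_j}(E_j,R)}|\varphi_j| \le C(n,R)$.

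For higher regularity, the Type I condition plus Shi's derivative estimates give uniform $C^k_{loc}$ bounds on $\omega_j$, while the harmonicity and holomorphicity of $F_j$ (extending Lemma \ref{C2 estimate} as indicated in the remark following its proof) give the same for $F_j^*\omega_{FS}$. Hence $\ddbar \varphi_j = \omega_j - (n-1) F_j^*\omega_{FS}$ is uniformly bounded in $C^k_{loc}$, and in particular $\Delta_{g_j}\varphi_j = n - (n-1)\, e(F_j)$ is bounded. The $C^0, C^1$ estimates above together with an elliptic Schauder bootstrap give uniform $C^{k,\alpha}_{loc}$ estimates on $\varphi_j$ for every $k$. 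Pulling back by the Cheeger-Gromov diffeomorphisms $\phi_j: U_j \to X_j$, a subsequence of $\phi_j^*\varphi_j$ converges in $C^\infty_{loc}(X_\infty)$ to a smooth function $U_\infty$. Since $\phi_j^* g_j \to g_\infty$ and $\phi_j^* J_j \to J_\infty$ smoothly, and $F_j \circ \phi_j \to F_\infty$ smoothly (Section \ref{section 3}), passing to the limit in $\omega_j = (n-1)F_j^*\omega_{FS} + \ddbar_{J_j}\varphi_j$ yields $g_\infty = (n-1) F_\infty^*\omega_{FS} + \ddbar U_\infty$.

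The main obstacle is the uniform $C^1$ bound of Step 2, because the denominator $u''$ degenerates both at $E_j$ (where $u'\to a_t$) and at spatial infinity (where $u' \to b_t$), and one must use the precise double-sided estimate of Lemma \ref{bound of u prime} together with the quantitative control of $u'$ on $B_{g_j}(E_j,R)$ to show the gradient stays bounded. A secondary subtlety is that $\ddbar$ depends on the complex structure, so passing the identity through the Cheeger-Gromov limit requires using the smooth convergence $\phi_j^* J_j \to J_\infty$ in an essential way.
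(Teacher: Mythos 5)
Your proposal is correct and follows essentially the same route as the paper: the same decomposition $\varphi_j = u(\cdot,t_j)/(T-t_j) - (n-1)\rho$ normalized to vanish on $E_j$, the same gradient bound $|\nabla\varphi_j|^2_{g_j} = (u'-a_{t_j})^2/\big((T-t_j)u''\big)\le C(n,R)$ via Lemmas \ref{estimate lemma} and \ref{bound of u prime} (your explicit use of the lower bound on $u''$ and of $b_t-u'\ge b_T/2>0$ is in fact the correct justification of the step the paper attributes to Lemma \ref{elementary bound}), followed by elliptic estimates and passage to the Cheeger--Gromov limit. The only cosmetic difference is that the paper stops at uniform $C^{2,\alpha}$ bounds on $U_j$ and deduces smoothness of $U_\infty$ a posteriori from the smoothness of $g_\infty$ and $F_\infty^*\omega_{FS}$, whereas you bootstrap to $C^\infty$ before taking the limit; both work.
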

\begin{proof}
Let $R>0$ be large number. On $B_{g_j}(E_j,R)$ the metrics \begin{equation}\label{gj expansion}g_j = \ddbar\bk{ \frac{u(t_j, \rho)}{T-t_j}} = (n-1)F_j^*\omega_{FS} + \ddbar \bk{\frac{u(t_j,\rho)}{T-t_j} - (n-1)\rho}.\end{equation}
 By the Calabi symmetry condition,  the K\"ahler potentials $u(t,\rho) = (n-1)(T-t)\rho + U_{0}(t,e^\rho)$ near $\rho = -\infty$, and we can normalize for each $t\in [0,T)$,  $U_{0}(t,0) = 0$, hence the smooth functions $\bk{\frac{u(t_j,\rho)}{T-t_j} - (n-1)\rho}|_{E_j}=\bk{\frac{u(t_j,\rho)}{T-t_j} - (n-1)\rho}|_{\rho=-\infty} = 0$ for any $j\ge 1$. Set $U_j = \frac{u(t_j,\rho)}{T-t_j} - (n-1)\rho$. The gradient of $U_j$ with respect to $g_j$ is 
\begin{align*}\abs{\nabla U_j}_{g_j} &= (T-t_j)\frac{(U_j')^2}{u''(t_j,\rho)}\\
&=(T-t_j)\frac{\bk{ \frac{u'(\rho, t_j)}{T-t_j} - (n-1) }^2}{u''(t_j,\rho)}\\
&\le \frac{(u'(t_j,\rho) - (n-1)(T-t_j))}{T-t_j} \frac{u'(\rho,t_j) - a_{t_j}}{u''(t_j,\rho)}\\
&\le C(n,R)\quad \text{ on } B_{g_j}(E_j, R)
\end{align*} for $j$ large enough, 
where in the last inequality we use Lemmas \ref{elementary bound} and \ref{estimate lemma}. Hence $\|U_j\|_{C^0(B_{g_j}(E_j, R))}\le C(n,R)$ for some $C(n,R)>0$. Moreover, the Laplacian of $U_j$ $$\Delta_{g_j} U_j = n - (n-1) tr_{\omega_j} f_j^*\omega_{FS}= n - (n-1)\frac{(n-1)(T-t_j)}{u'(t_j,\rho)}$$
satisfies $\Delta_{g_j} U_j|_{E_j} = 1$ and $$|\nabla \Delta_j U_j|_{g_j}^2 = C(n)\frac{(T-t_j)^3 u''(t_j)}{(u'(t_j))^4}\le C(n),$$
so $$\|\Delta_j U_j\|_{C^1(g_j, B_{g_j}(E_j,R))}\le C(n,R).$$ Hence by elliptic estimate $$\|U_j\|_{C^{2,\alpha}(g_j, B_{g_j}(E_j, R/2))}\le C(n,R).$$ Therefore the functions $U_j$ are locally uniformly bounded in $C^{2,\alpha}$ norm on any compact subset $B_{g_j}(E_j, R)$ of $X_j$. Taking a subsequence and using a diagonal argument, $U_j$ converge (in the Cheeger-Gromov sense) locally uniformly in $C^{2,\alpha}$ topology to some $C^{2,\alpha}$ function $U_\infty$ on $X_\infty$,  %$U_j\xrightarrow{C_{loc}^{2,\alpha}} U_\infty$ on $B_\infty(p_\infty,R/2)$, 
therefore from \eqref{gj expansion}, $C^{1,\alpha}_{loc}$ convergence of the holomorphic maps $F_j$ to $F_\infty$ and smooth convergence of complex structures, the metrics $g_j$ converge in $C^\alpha$ norm to 
\begin{equation}\label{g_infty}g_\infty = (n-1)F_\infty^* \omega_{FS} + \ddbar U_\infty.\end{equation} Since $g_\infty$ and $F_\infty^*\omega_{FS}$ are both smooth, $U_\infty$ is also a smooth function on $X_\infty$. 
\end{proof}

Take coordinates of $\mathcal O_{\cp^{n-1}}(-1)\cong\widetilde{\mathbb C^n}$, $\mathbb C^n$ blown-up at the origin, $\zeta = z_1(\neq 0)$, $w_2 = z_2/z_1$, $\ldots$, $w_n = z_n/z_1$, where $z_1,\ldots,z_n$ are the natural coordinates on $\mathbb C^n$, and $\zeta$ is the  coordinate of fibers and $w_2,\ldots, w_n$ are coordinates of $\cp^{n-1}$. Set $\rho = \log \abs{z} = \log\bk{\abs{\zeta}(1+\abs{w})}$, our goal in this subsection is to show 
\begin{lemma}The function $U_\infty$ constructed in \eqref{g_infty} can be modified to depend only on $\rho$. That is, $U_\infty(\zeta,w) = \tilde U_\infty(\abs{\zeta}(1+\abs{w}))$ for some single-variable function $\tilde U_\infty(\cdot):\mathbb R\to \mathbb R$. Hence $U_\infty$ is $U(n)$ invariant on $\mathbb C^n\backslash \{0\}$. 
\end{lemma}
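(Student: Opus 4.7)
The plan is to modify $U_\infty$ by a pluriharmonic function so that it becomes $U(n)$-invariant on $X_\infty$, and then to use the identification $X_\infty\setminus E_\infty\cong\mathbb C^n\setminus\{0\}$ from Corollary \ref{main cor} to conclude that the modified potential depends only on $\rho$.

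First, I would note that since the $U(n)$-action on $X_\infty$ constructed in Section \ref{section 4} consists of holomorphic isometries of $g_\infty$ and $F_\infty$ is $U(n)$-equivariant (with the standard action on $\cp^{n-1}$), both $g_\infty$ and $F_\infty^*\omega_{FS}$ are $U(n)$-invariant. Hence the $(1,1)$-form $\ddbar U_\infty=g_\infty-(n-1)F_\infty^*\omega_{FS}$ is $U(n)$-invariant, even though $U_\infty$ itself may not be. I then average $U_\infty$ over $U(n)$ with respect to the normalized Haar measure $d\mu$:
$$\bar U_\infty(x):=\int_{U(n)}U_\infty(\sigma\cdot x)\,d\mu(\sigma).$$
Because each $\sigma$ acts biholomorphically, $\ddbar(U_\infty\circ\sigma)=\sigma^*\ddbar U_\infty=\ddbar U_\infty$ for every $\sigma$, and differentiating under the integral yields $\ddbar\bar U_\infty=\ddbar U_\infty$. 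Consequently $\bar U_\infty-U_\infty$ is pluriharmonic, $\bar U_\infty$ remains a smooth K\"ahler potential compatible with \eqref{g_infty}, and is $U(n)$-invariant by construction. Thus we may replace $U_\infty$ by $\bar U_\infty$ without loss of generality.

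Second, I would identify the abstract $U(n)$-action on $X_\infty$ with the standard $U(n)$-action on $\mathcal O_{\cp^{n-1}}(-1)$ under the biholomorphism of Corollary \ref{main cor} (the standard action being the unique lift of the standard action on $\cp^{n-1}$ to the tautological line bundle, which corresponds to the linear action on $\mathbb C^n\setminus\{0\}$ via blow-down). Equivariance of $F_\infty$ already pins down the induced action on the base. For the fiber direction, for any $x_\infty\in X_\infty\setminus E_\infty$ choose $x_j\in X_j\setminus E_j\cong \mathbb C^n\setminus\{0\}$ with $x_j\to x_\infty$ under the Cheeger--Gromov convergence; the maps $\chi_{j,\sigma}$ in \eqref{eqn:chi j} are literally the standard linear action $z\mapsto\sigma z$ on the ambient $\mathbb C^n\setminus\{0\}$. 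The $C^{1,\alpha}_{loc}$ Cheeger--Gromov convergence $\chi_{j,\sigma}\to\chi_{\infty,\sigma}$, combined with the smooth convergence of complex structures, identifies $\chi_{\infty,\sigma}$ in the $(z_1,\ldots,z_n)$ coordinates inherited on $X_\infty\setminus E_\infty$ with $z\mapsto\sigma z$.

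With the action identified, any smooth $U(n)$-invariant function on $\mathbb C^n\setminus\{0\}$ is a function of the unique invariant $|z_1|^2+\cdots+|z_n|^2=e^\rho$ alone, so $\bar U_\infty|_{X_\infty\setminus E_\infty}=\tilde U_\infty(\rho)$ for some smooth single-variable $\tilde U_\infty$, extending across $E_\infty$ by continuity and smoothness. The main obstacle is the passage from the abstractly constructed $U(n)$-action on $X_\infty$ to the standard linear action in the natural coordinates inherited from $\mathcal O_{\cp^{n-1}}(-1)$: one must keep track of how the biholomorphism of Corollary \ref{main cor} arises from the Cheeger--Gromov limits and confirm that it intertwines the abstract $U(n)$-action with the standard linear one, i.e.\ that no twist creeps in from the line-bundle automorphisms.
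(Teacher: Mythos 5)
Your proposal is correct and follows essentially the same route as the paper: average $U_\infty$ over $U(n)$ with respect to Haar measure (using that $\ddbar U_\infty=g_\infty-(n-1)F_\infty^*\omega_{FS}$ is $U(n)$-invariant) and then use the action to force radial dependence. The only cosmetic difference is that the paper splits the last step into base-transitivity plus the $S^1$-rotation of the isotropy group $U_p$ on the fiber (the rotation being pinned down via the classification of $\mathrm{Aut}(D)$ and $\mathrm{Aut}(\mathbb C)$ together with Lemma \ref{effective lemma}), whereas you invoke invariance under the full standard action on $\mathbb C^n\backslash\{0\}$ at once — the identification of the abstract action with the standard one, which you rightly flag, is handled in the paper by exactly those fiberwise considerations.
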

\begin{proof}
By construction the limit metric $g_\infty$ is invariant under the $U(n)$-action defined in section \ref{section 4}, so we have $$\sigma^* g_\infty = g_\infty,\quad \forall \sigma\in U(n).$$ By \eqref{g_infty} we have $$\sigma^*(\ddbar U_\infty) = \ddbar \sigma^* U_\infty = \ddbar U_\infty,\quad \forall \sigma\in U(n).$$ By averaging the function $U_\infty$ over the compact group $U(n)$ using the Harr measure, we may assume $\sigma^* U_\infty = U_\infty$ for all $\sigma\in U(n)$. Since we identify the unique fixed point of the $U_p$ action in the fiber $F_p$ with the origin in $\mathbb C$, the zero section $E_\infty$ (which is locally given by $\zeta = 0$) of the line bundle $\mathcal O_{\cp^{n-1}}(-1)$ coincide with the fixed point of the $U(n)$-actions in each fiber of $F_\infty:X_\infty\to \cp^{n-1}$.  Since $U(n)$-action is transitive on $\mathbb{CP}^{n-1}$, for any $w=(\zeta,w_2,\ldots w_n)\in X_\infty$, there is some $\sigma_w\in U(n)$ mapping $w$ to $(\zeta',0,\ldots,0)\in X_\infty$ for some $\zeta'\in\mathbb C$ satisfying $\abs{\zeta'} = \abs{\zeta}(1+\abs{w})$.  So (writing $Z=(w_2,\ldots,w_n)$)
 \begin{equation}\label{eqn:mapping}U_\infty(\zeta,\bar\zeta, Z, \bar Z) = U_\infty(\sigma_w\cdot (\zeta,Z) ) = U_\infty(\zeta',\bar \zeta',0,0).\end{equation} 
 
 On the other hand, for $p=(0,\ldots,0)\in \mathbb {CP}^{n-1}$, the isotopy group $U_p\subset U(n)$ at $p$ preserves the fiber $F_\infty^{-1}(p)$, which is either $D\subset \mathbb C$ or $\mathbb C$. The subgroup $U_p$ fixes the point $(\zeta = 0, 0,\ldots, 0)\in E_\infty$, which can be viewed as the origin in the fiber $F_\infty^{-1}(p)$. Noting that the automorphism groups of $D$ and $\mathbb C$ are given by
$$\mathrm{Aut}(D) = \Big\{f_{a,\theta}| f_{a,\theta}(\zeta) = e^{i\theta}\frac{\zeta-a}{1-\bar a \zeta},\quad \theta\in S^1,~a\in D\Big\},$$
$$\mathrm{Aut}(\mathbb C) = \Big\{a\zeta + b| a,b\in\mathbb C,~a\neq 0\Big\},$$
respectively. We see from both cases that the $U_p$ action on the fiber $F_\infty^{-1}(p)$ is given by $\sigma_\theta (\zeta) = e^{i\theta} \zeta$ for $\theta\in S^1$, which means that the $U_p$ action on the fiber is the rotation action of $S^1$ on $\mathbb C$. The property that $U_\infty$ is invariant under the $U_p$ action implies that $$U_\infty(\zeta',\bar\zeta', 0, 0) = U_\infty(|\zeta'|, |\zeta'|, 0 ,0),\quad\forall \zeta'\in\mathbb C,$$ 
combining with \eqref{eqn:mapping}, we see for any $(\zeta,w_2,\ldots,w_n)\in X_\infty$ $$U_\infty(\zeta,w_2,\ldots,w_n)=\tilde U_\infty\big(\abs{\zeta}(1+\abs{w})\big)$$ for some single variable function $\tilde U_\infty$.
\end{proof}

\subsection{Proof of Theorem \ref{main theorem}}
So far we have shown that $X_\infty\subset\mathcal O_{\mathbb{CP}^{n-1}} (-1)$ is a fiber bundle with fibers either disk $D$ or the line $\mathbb C$ and the metric $g_\infty$ is $U(n)$-invariant on $\mathbb C^n\backslash \{0\}\cap (X_\infty\backslash E_\infty)$. We know (Theorem \ref{Song theorem}, or \cite{Song1}) that the metric $g_\infty$ is a {\em complete} gradient K\"ahler Ricci soliton, i.e., for some $f_\infty\in C^{\infty}(X_\infty)$ such that
\begin{equation}\label{eqn:krs}\ric(g_\infty)+ \ddbar f_\infty = g_\infty, \qquad \nabla\nabla f_\infty = 0\end{equation}

Without loss of generality, we can choose $f_\infty$ such that it is invariant under the $U(n)$-action  on $\mathbb C^n\backslash\{0\}$, since both $g_\infty$ and $\ric(g_\infty)$ are invariant under $U(n)$-action. $X_\infty\backslash E_\infty$ can be identified with either a punctured ball $B^*\subset \mathbb C^n\backslash \{0\}$, or $\mathbb C^n\backslash \{0\}$, on which the metric $g_\infty$ can be written as $g_\infty = \ddbar u_\infty$ satisfying the Calabi symmetry condition near $z = 0\in\mathbb C^n$, i.e., $$u_\infty = u_\infty(\rho) = (n-1)\rho + U_0(e^\rho), \quad\text{near }\rho = -\infty$$ 
for some smooth $U_0:(-\epsilon,\epsilon)\to \mathbb R$ such that $U_0'(0)>0$, $U_0''(0)>0$, where $\rho = \log \abs{z}$, and $u_\infty', u_\infty''>0$. The equation \eqref{eqn:krs} is equivalent to the following equation on $X_\infty\backslash E_\infty$,
%%%
$$u_\infty^{(4)} - 2\frac{(u_\infty''')^2}{u_\infty''} + n u_\infty''' - (n-1) \frac{(u_\infty'')^3}{(u_\infty')^2} - \big(u_\infty''' u_\infty' - (u_\infty'')^2\big) = 0,$$
where $u_\infty'=\frac{d}{d\rho}u_\infty$. Denote $\phi = u_\infty'$, then by some calculations we see that the above equation is equivalent to 
\begin{equation}\label{reduction equation}
(\log \phi')'+ (n-1)(\log\phi)' - \mu\phi' + \phi - n =0,\quad \text{for some }\mu\in\mathbb R.
\end{equation}
\begin{lemma}
$\mu\neq 0$.
\end{lemma}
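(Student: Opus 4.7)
My strategy is to identify the integration constant $\mu$ with the derivative of the soliton potential $f_\infty$ along $\rho$, and then rule out $\mu=0$ by combining Myers' theorem with the non-compactness of $X_\infty$ from Corollary~\ref{main cor}. First, starting from \eqref{eqn:krs}, the identity $\ric(g_\infty)=\ddbar v_\infty$ with Ricci potential $v_\infty=n\rho-(n-1)\log u_\infty'-\log u_\infty''$ gives $\ddbar(f_\infty - u_\infty + v_\infty)=0$ on $X_\infty\setminus E_\infty$. Since $f_\infty$, $u_\infty$, and $v_\infty$ are all $U(n)$-invariant and any $U(n)$-invariant pluriharmonic function on $\mathbb C^n\setminus\{0\}$ is constant, we conclude $f_\infty=u_\infty-v_\infty + c$ there. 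Differentiating in $\rho$ yields
\[
f_\infty'(\rho)=\frac{\phi''}{\phi'}+(n-1)\frac{\phi'}{\phi}+\phi-n,
\]
so that \eqref{reduction equation} is precisely the first integral $f_\infty'=\mu\, u_\infty''$.

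Next I would argue by contradiction. Suppose $\mu=0$. Then $f_\infty'\equiv 0$ on $X_\infty\setminus E_\infty$, hence $f_\infty$ is constant there, and by continuity on all of $X_\infty$. The soliton equation \eqref{eqn:krs} then collapses to $\ric(g_\infty)=g_\infty$, so $g_\infty$ would be K\"ahler-Einstein with positive Einstein constant $+1$. Passing to the underlying real Riemannian structure of real dimension $2n$, this becomes $\mathrm{Ric}^{\mathbb R}(g_\infty)=g_\infty$, which in particular satisfies $\mathrm{Ric}^{\mathbb R}(g_\infty)\ge (2n-1)\cdot\tfrac{1}{2n-1}\, g_\infty$, fulfilling the hypothesis of Myers' theorem with $K=\tfrac{1}{2n-1}$.

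Since $(X_\infty,g_\infty)$ is complete by Theorem~\ref{Song theorem}(1), Myers' theorem then forces $X_\infty$ to be compact, with diameter at most $\pi\sqrt{2n-1}$. But Corollary~\ref{main cor} identifies $X_\infty$ biholomorphically with either the total space of $\mathcal O_{\cp^{n-1}}(-1)$ or a holomorphic disk subbundle of it, both of which are non-compact. This contradiction forces $\mu\neq 0$. The only mildly technical point is the opening identification $f_\infty'=\mu\, u_\infty''$; once this dictionary between the integration constant and the soliton drift is set up, the rest of the proof is immediate from results already established in the paper, so I do not expect a real obstacle.
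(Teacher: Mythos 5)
Your proposal is correct and follows essentially the same route as the paper: in both arguments $\mu=0$ forces the first integral (the paper's quantity $Q=\log\det g_\infty+u_\infty$, which is your $f_\infty$ up to sign and an additive constant) to be constant, so $g_\infty$ is K\"ahler--Einstein with $\ric(g_\infty)=g_\infty$, and Myers' theorem then contradicts the non-compactness (infinite diameter) of $X_\infty$. Your explicit identification $f_\infty'=\mu\,u_\infty''$ is a slightly more transparent way of seeing why $\mu=0$ yields the Einstein equation, but it is not a different proof.
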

\begin{proof}If $\mu = 0$, then for $Q:=\log \det g_\infty + u_\infty = -n\rho + (n-1)\log\phi + \log(\phi') + u_\infty$, we have $Q' = 0$, and this implies the metric $g_\infty$ is KE with $\ric(g_\infty) = g_\infty$. Myers' theorem from Riemannian geometry implies the diameter of $(X_\infty,g_\infty)$ is bounded, however, from previous arguments we know the diameter of $(X_\infty,g_\infty)$ is infinity, hence a contradiction. Thus $\mu\neq 0$.
\end{proof}

As in \cite{FIK}, since $\phi' = u_\infty''>0$, we may write $\phi' = F(\phi)$ for some smooth function $F$ on $\mathbb R^+$, in terms of  which \eqref{reduction equation} can be written as
\begin{equation}\label{F equation}
F' + \bk{\frac{n-1}{\phi} - \mu }F - (n-\phi) = 0,
\end{equation}
and one can solve this first order ODE 
\begin{equation}\label{equation solution}
\phi' = F(\phi) = \frac{\nu e^{\mu\phi}}{\phi^{n-1}} + \frac{\phi}{\mu} - \frac{\mu - 1}{\mu^{n+1}}\sum_{j=0}^{n-1}\frac{n!}{j!}\mu^j \phi^{j+1-n},
\end{equation}
for some constant $\nu\in\mathbb R$. 

At any zero point $\phi_0$ of $F(\phi)$, by  \eqref{F equation}, we know $F(\phi_0)' = n-\phi_0$, and by the intermediate value theorem this implies that $F(\phi)$ has at most two positive zeros $0<a\le b$ satisfying $0<a\le n\le b$. By the Calabi symmetry, we have 
$$\lim_{\rho\to -\infty} \phi(\rho) = n-1,\quad \lim_{\rho\to -\infty}\phi'(\rho) = 0,$$
and $0=\lim_{\rho\to -\infty} \phi' = \lim_{\rho\to-\infty}F(\phi) = F(n-1)$, so $a= n-1$ is a zero of $F$. Plugging $a= n-1$ into \eqref{equation solution} we get
\begin{equation}\label{determine mu}
\frac{\nu e^{\mu a}}{a^{n-1}} + \frac{a}{\mu} - \frac{\mu - 1}{\mu^{n+1}}\sum_{j=0}^{n-1}\frac{n!}{j!}\mu^j a^{j+1-n} = 0
\end{equation}

\begin{prop}
We must have $\mu>0$ and $\nu = 0$.
\end{prop}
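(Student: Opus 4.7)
The plan is to derive both conclusions from the completeness of $g_\infty$ by tracking the sign and growth of $F(\phi)$ through an auxiliary integrating-factor function. Set
\[
Q(\phi) := \phi^{n-1} e^{-\mu\phi} F(\phi),
\]
which has the same sign as $F$ on $\phi > 0$. Multiplying \eqref{F equation} by the integrating factor $\phi^{n-1} e^{-\mu\phi}$ yields $Q'(\phi) = \phi^{n-1} e^{-\mu\phi}(n - \phi)$, and the Calabi boundary condition $F(n-1) = 0$ gives $Q(n-1) = 0$, so
\[
Q(\phi) = \int_{n-1}^\phi t^{n-1} e^{-\mu t}(n-t)\,dt.
\]
Regardless of the sign of $\mu$, the integrand is strictly positive on $(n-1,n)$ and strictly negative on $(n,\infty)$, so $Q$ strictly increases from $0$ to its unique positive maximum $Q(n)$ on $[n-1,n]$ and then strictly decreases on $[n,\infty)$. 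In particular $Q(n) > 0$, which rules out a double zero of $F$ at the only candidate value $\phi = n$ (recall that $F'(b) = n - b$ at any zero $b$ by \eqref{F equation}).

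Next I would translate the completeness of $g_\infty$ at the far end of the $\rho$-interval into a condition on $F$. The $U(n)$-symmetric distance from $E_\infty$ is $d(\rho) = \tfrac{1}{2}\int_{-\infty}^\rho \sqrt{\phi'(s)}\,ds$; substituting $\phi' = F(\phi)$ converts this to $d(\phi) = \tfrac{1}{2}\int_{n-1}^\phi d\phi'/\sqrt{F(\phi')}$. Thus completeness forces
\[
\int_{n-1}^{\phi^*}\frac{d\phi}{\sqrt{F(\phi)}} = +\infty, \qquad \phi^* := \sup_\rho \phi(\rho).
\]
Near any simple zero $b$ of $F$ one has $F(\phi) \sim (n-b)(\phi-b)$, so $\int^b d\phi/\sqrt{F(\phi)}$ converges; this is the standard mechanism for producing incompleteness in the cases to exclude.

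A case analysis on $(\mu,\nu)$ then finishes the proof. If $\mu < 0$, the integrand defining $Q$ has exponentially growing magnitude for large $\phi$, so $Q(\phi) \to -\infty$, and together with $Q(n) > 0$ this gives a simple zero $b > n$ of $F$ at which the completeness integral converges, a contradiction. If $\mu > 0$, the polynomial part of \eqref{equation solution} multiplied by $\phi^{n-1} e^{-\mu\phi}$ decays to zero, so $\nu = \lim_{\phi \to \infty} Q(\phi)$; then $\nu > 0$ gives $F(\phi) \sim \nu e^{\mu\phi}/\phi^{n-1}$ at infinity with a convergent completeness integral, while $\nu < 0$ produces a simple zero $b > n$ of $F$ by the same sign argument as in the $\mu < 0$ case. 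The only remaining possibility is $\mu > 0$, $\nu = 0$: then $Q > 0$ throughout $(n-1, \infty)$, $F(\phi) \sim \phi/\mu$ as $\phi \to \infty$, and $\int d\phi/\sqrt{F} \sim \int\sqrt{\mu/\phi}\,d\phi = +\infty$, which is consistent with completeness. The main obstacle is to verify carefully the identification $\nu = \lim_{\phi\to\infty} Q(\phi)$ in the $\mu > 0$ regime and to confirm that every putative second zero of $F$ is simple; both reduce to the bookkeeping for $Q$ already built into its integral formula, with the positivity $Q(n)>0$ doing the essential work.
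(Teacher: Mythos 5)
Your proof is correct, and it reorganizes the argument in a way that differs from the paper in two respects. First, you introduce the integrating-factor quantity $Q(\phi)=\phi^{n-1}e^{-\mu\phi}F(\phi)$ with its explicit integral formula $Q(\phi)=\int_{n-1}^{\phi}t^{n-1}e^{-\mu t}(n-t)\,dt$; this packages the sign analysis of $F$ (positivity of $F(n)$, existence and simplicity of a second zero $b>n$, and the identification $\nu=\lim_{\phi\to\infty}Q$ when $\mu>0$) into the monotonicity of a single function, whereas the paper reads these facts off the explicit solution \eqref{equation solution} together with the observation $F'(\phi_0)=n-\phi_0$ at any zero. Second, and more substantively, you derive the contradiction in every bad case from incompleteness, via the criterion that completeness forces $\int_{n-1}^{\phi^*}d\phi/\sqrt{F(\phi)}=+\infty$ and that this integral converges across a simple zero $b>n$; the paper instead rules out $\mu<0$ and $\nu<0$ by noting that a bounded $\phi$ forces $\mathrm{Vol}(X_\infty,g_\infty)=C(n)\big((\lim\phi)^n-a^n\big)$ to be finite, contradicting the volume unboundedness coming from Perelman non-collapsing, and reserves the completeness argument for $\nu>0$ only. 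Your route is self-contained in that it needs only completeness (plus noncompactness of the fibers, so that finite radial length contradicts Hopf--Rinow) and not the volume lower bound; the paper's volume argument is slightly quicker where it applies. Do make sure, when writing this up, to record explicitly that $Q>0$ on $(n-1,b)$ (so that $\phi$ genuinely increases up to the putative zero $b$ and the incompleteness is realized along the actual trajectory), and that the convergence of $\int d\phi/\sqrt{F}$ at the lower endpoint $\phi=n-1$ (where $F'(n-1)=1$) is also finite, so the divergence demanded by completeness must occur at the upper end --- both points are implicit in your monotonicity analysis of $Q$ but worth stating.
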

\begin{proof}
Suppose $\mu<0$, then for large $\phi>0$, the leading term on the RHS of \eqref{equation solution} is $\phi/\mu$, hence the solution to \eqref{equation solution} exists for all $\rho\in (-\infty,\infty)$, and  $\phi(\rho)$ is uniformly bounded for $\rho\in\mathbb R$, we have $$\lim_{\rho\to \infty}\phi(\rho) = b,\quad \lim_{\rho\to \infty} \phi'(\rho) = 0,\text{ for some }b>0.$$
So we have $a\le \phi\le b$. However, the volume of $(X_\infty,g_\infty)$ is given by
$$Vol(X_\infty,g_\infty) =C(n) \int_{-\infty}^{\infty}(\phi)^{n-1}\phi'd\rho = C(n)\bk{(\lim_{\rho\to\infty}\phi(\rho))^n - a^n},$$ and we know $Vol(X_\infty,g_\infty)$ is unbounded, hence $\lim_{\rho\to \infty}\phi(\rho)$ is not bounded, and we get a contradiction.

% and this means $F(b) = 0$ and $b> n$, and we also have $F'(b) = n-b =: -\beta$ for some $\beta>0$. So near $\rho = \infty$, the solution has the form
%$$\phi(\rho) = b + e^{-\beta\rho} A_\infty(e^{-\beta\rho}),$$ for some smooth function $A_\infty:(-\epsilon,\epsilon)\to\mathbb R$ with $A_\infty(0)>0, A'_\infty(0)>0$. This would imply the metric $g_\infty$ is incomplete near $\rho = \infty$, since
%$$d_{g_\infty}\sim \int_{0}^\infty \sqrt{\phi'(\rho)}d\rho$$ is integrable near $\rho = \infty$. So we conclude that $\mu>0$.

Suppose $\nu<0$, then for large $\phi$, $F(\phi)$ is dominated by $\nu \phi^{1-n}e^{\mu \phi}<0$, and this implies $F(\phi)$ has another zero $b>a$, which contradicts the unboundedness of the volume of $(X_\infty,g_\infty)$ as before. If $\nu>0$, $F$ is controlled by the term $\nu \phi^{1-n} e^{\mu \phi}>0$ when $\phi$ is large, so there is no second zero $b$ of $F$, and $F>0$ on $\phi\in (a,\infty)$, $\phi(\rho)\to \infty$ as $\rho$ converges to a maximal value $\rho_0<\infty$.

For $\phi$ large enough, we have $\phi'\ge c e^{2\mu\phi/3}$ for some small constant $c = c(\nu)>0$, integrating over $[\rho,\rho_0)$, we have $$e^{\mu\phi(\rho)}\le \frac{1}{c\mu (\rho_0 - \rho)^{3/2}},$$and hence for $\phi$ large
$$u_\infty'' = \phi'\le \frac{2\nu}{a^{n-1}} \frac{1}{c\mu (\rho_0 - \rho)^{3/2}},  $$ then the integral 
$$\int_0^{\rho_0} \sqrt{u_\infty''}d\rho\le C \int_0^{\rho_0} \frac{1}{(\rho_0 - \rho)^{3/4}}d\rho<\infty$$ contradicting the completeness of the metric $g_\infty$ on $X_\infty$.
\end{proof}
Hence from \eqref{equation solution} we know that the solution $\phi$ exists for all $\rho\in (-\infty, \infty)$ since the leading term on RHS of \eqref{equation solution} is the linear $\phi/\mu$ when $\phi$ is large and this implies $X_\infty$ is the line bundle $\mathcal O_{\mathbb{CP}^{n-1}}(-1)$, and from \eqref{determine mu} we have
\begin{equation*}
 \frac{a}{\mu} - \frac{\mu - 1}{\mu^{n+1}}\sum_{j=0}^{n-1}\frac{n!}{j!}\mu^j a^{j+1-n} = 0,
\end{equation*}
which must have a positive root $\mu=\mu(n)$ for the given $a= n-1$ by the intermediate value theorem, and for this root $\mu$, the solution $\phi$ to \eqref{equation solution} defines a complete K\"ahler Ricci soliton, which must be one of  the FIK solutions constructed in \cite{FIK}.

\end{document}